\newcommand{\SK}{\ensuremath{K}}
\newcommand{\SN}{\ensuremath{N}}
\def\le{\leqslant}
\def\ge{\geqslant}
\def\dd#1{{\,\mathrm{d}}#1}
\def\ve{\varepsilon}
\def\pf{\noindent \emph{Proof.}\ }
\def\qed{{\quad\rule{1mm}{3mm}\,}}
\newcommand{\E}{\ensuremath{\mathbb{E}}}
\newcommand{\V}{\ensuremath{\mathbb{V}}}
\newcommand{\Cov}{\ensuremath{\mathrm{Cov}}}
\newcommand{\Prob}{\ensuremath{\mathbb{P}}}
\newcommand{\N}{{\mathbb{N}}}
\newcommand{\Rset}{{\mathbb{R}}}
\newcommand{\R}{{\mathbb{R}}}
\newcommand{\C}{{\mathbb{C}}}
\begin{document}
\allowdisplaybreaks[1]

\newtheorem{thm}{Theorem}[section]
\newtheorem{cor}[thm]{Corollary}
\newtheorem{lem}[thm]{Lemma}
\newtheorem{conj}[thm]{Conjecture}
\newtheorem{pro}[thm]{Proposition}
\newtheorem{df}[thm]{Definition}
\definecolor{yg}{RGB}{235,255,204}

%\pagecolor{green!15}

\title{\textbf{Dependence and phase changes in random $m$-ary
search trees}}
\author{
Hua-Huai Chern\\
    Department of Computer Science\\
    National Taiwan Ocean University\\
    Keelung 202\\
    Taiwan \and
Michael Fuchs\thanks{Partially supported by the
	Ministry of Science and Technology, Taiwan
	under the grant MOST-103-2115-M-009-007-MY2.}\\
    Department of Applied Mathematics\\
    National Chiao Tung University\\
    Hsinchu 300\\
    Taiwan \and
Hsien-Kuei Hwang\thanks{This author's research stay at J.\ W.\
    Goethe-Universit\"at was partially supported
	by the Simons Foundation and by the Mathematisches
	Forschungsinstitut Oberwolfach.}\\
    Institute of Statistical Science\\
    Academia Sinica\\
    Taipei 115\\
    Taiwan  \and
Ralph Neininger\thanks{Supported by DFG grant NE 828/2-1.}\\
    Institute for Mathematics\\
    Goethe University\\
    60054 Frankfurt a.M.\\
    Germany
}
\date{\today}
\maketitle

\begin{abstract}

We study the joint asymptotic behavior of the space requirement and
the total path length (either summing over all root-key distances or
over all root-node distances) in random $m$-ary search trees. The
covariance turns out to exhibit a change of asymptotic behavior: it
is essentially linear when $3\le m\le 13$ but becomes of higher order
when $m\ge14$. Surprisingly, the corresponding asymptotic correlation
coefficient tends to zero when $3\le m\le 26$ but is periodically
oscillating for larger $m$, and we also prove asymptotic independence
when $3\le m\le 26$. Such a less anticipated phenomenon is not
exceptional and we extend the results in two directions: one for more
general shape parameters, and the other for other classes of random
log-trees such as fringe-balanced binary search trees and quadtrees.
The methods of proof combine asymptotic transfer for the underlying
recurrence relations with the contraction method.

\end{abstract}

\noindent \emph{AMS 2010 subject classifications.} Primary 60F05,
68Q25; secondary 68P05, 60C05, 05A16.\\

\emph{Key words.} $m$-ary search tree, correlation, dependence,
recurrence relations, fringe-balanced binary search tree, quadtree,
asymptotic analysis, limit law, asymptotic transfer, contraction
method.

\section{Introduction}

The $m$-ary search trees are a class of data structures introduced by
Muntz and Uzgalis \cite{muntz71} in 1971 in computer algorithms to
support efficient searching and sorting of data; see the next section
for more details. When constructed from a random permutation of $n$
elements, the space requirement (total number of nodes to store the
input) $S_n$ of such \emph{random $m$-ary search trees} ($m\ge3$) is
known to exhibit a \emph{phase change phenomenon}: its distribution
is asymptotically Gaussian for large $n$ when the branching factor
$m$ satisfies $3\le m\le 26$ but does not approach a limit law when
$m\ge27$; see \cite{chern01,hwang03, mahmoud92,mahmoud89} and the
references therein. On the other hand, it is also known that the
total key path length $K_n$ (the sum over all distances from the root
to any \emph{key}) does not change its limiting behavior when $m$
varies, and tends asymptotically, after properly centered and
normalized, to a limit law for each $m\ge3$. Another closely related
shape measure, the total node path length $N_n$ (summing over all
distances from the root to any \emph{node}) also follows
asymptotically a very similar behavior.

Our motivating question was ``how does $K_n$ or $N_n$ depend on
$S_n$?'' Surprisingly, despite the strong dependence of the
definition of $N_n$ on $S_n$ (see \eqref{S-N}), we show that the
correlation coefficient $\rho(S_n,N_n)$ satisfies
\begin{align}\label{rho-SN}
    \rho(S_n,N_n)
    \sim \begin{cases}
        0, &\text{if}\ 3\le m\le 26;\\ \displaystyle
        F_\rho(\beta\log n),
            &\text{if}\ m\ge 27,
    \end{cases}
\end{align}
where $F_\rho(t)$ is a $2\pi$-periodic function and $\beta=\beta_m$
is a structural constant depending on $m$. The same type of results
also holds for $\rho(S_n,K_n)$. In words, \emph{$N_n$ and $S_n$ are
asymptotically uncorrelated for $3\le m\le 26$ and their correlation
fluctuates (between $-1$ and $1$) for $m\ge27$}; see
Figure~\ref{fig-corr-coeff} for an illustration.

\begin{figure}[!ht]
\begin{center}
\includegraphics[height=3cm]{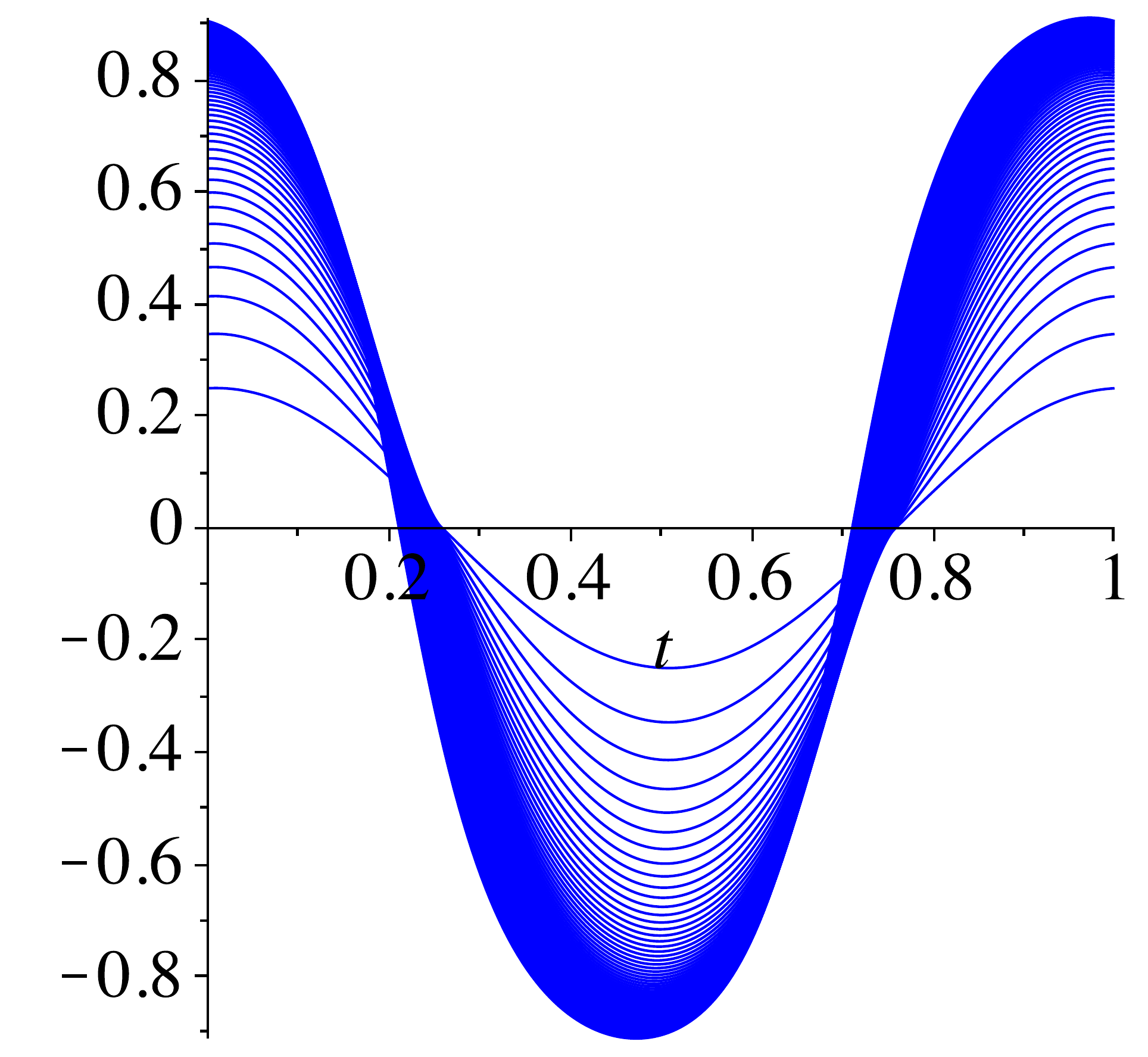}\;\;
\includegraphics[height=3cm]{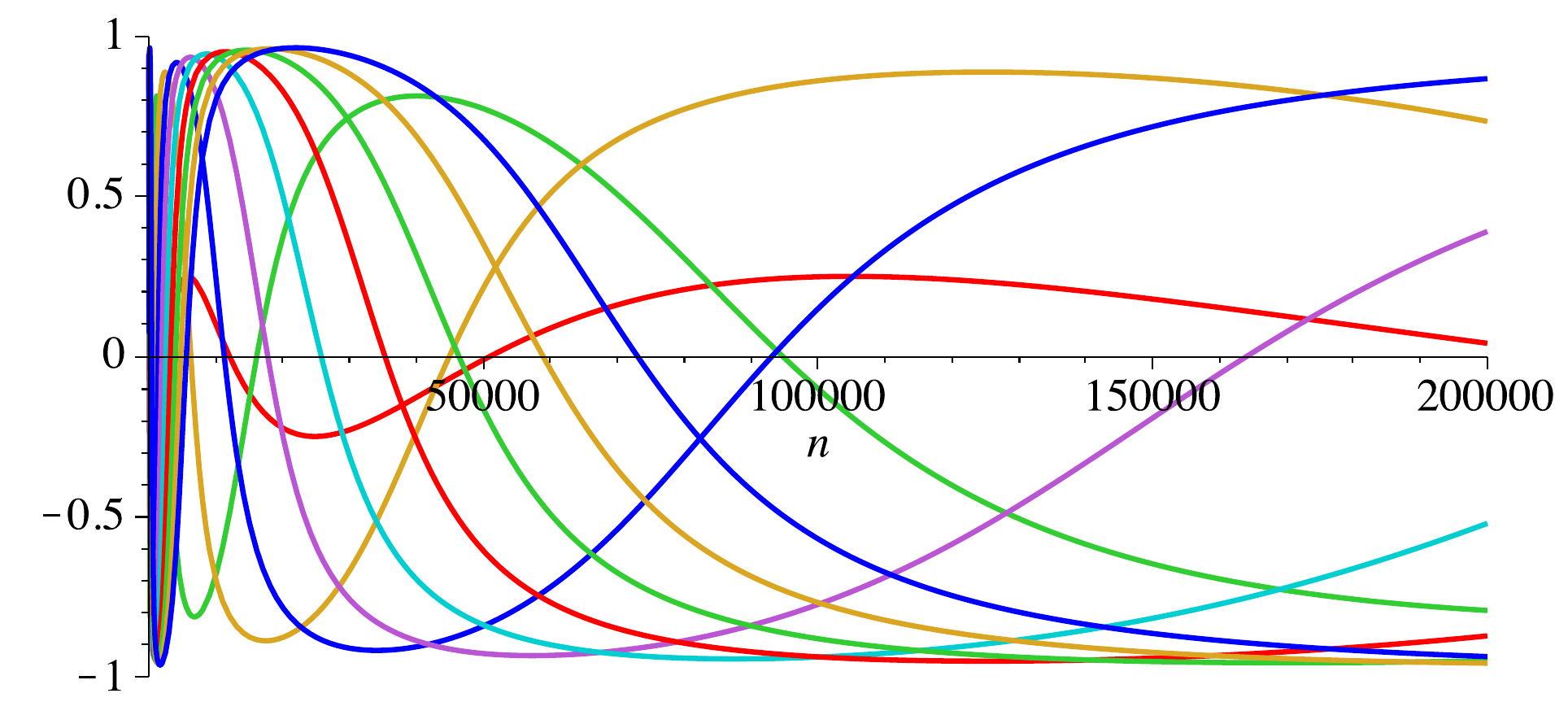}
\end{center}
\vspace*{-.3cm}
\caption{\emph{The periodic functions $F_\rho(2\pi t)$
for $m=27,\dots, 100$ (left) and $F_\rho(\beta\log n)$
for $m=27, 54, \dots, 270$ (right).}}
\label{fig-corr-coeff}
\end{figure}

One reason why the above result \eqref{rho-SN} may seem less or even
counter-intuitive is because of the seemingly strong dependence of
$N_n$ on $S_n$ in the recursive equations satisfied by both random
variables
\begin{align}\label{S-N}
    \begin{cases}
        S_n
        \stackrel{d}{=} S_{I_1}^{(1)}+\cdots+S_{I_m}^{(m)} +1,\\
        N_n
        \stackrel{d}{=} N_{I_1}^{(1)}+\cdots+N_{I_m}^{(m)} +
            S_{I_1}^{(1)}+\cdots+S_{I_m}^{(m)},
    \end{cases}
\end{align}
where the $(S_i^{(r)},N_i^{(r)})$'s are independent copies of
$(S_i,N_i)$, respectively, also independent of $(I_1,\ldots,I_m)$, and
\begin{align}\label{proba}
    {\mathbb P}(I_1=i_1,\dots, I_m=i_m)
    = \frac1{\binom{n}{m-1}},
\end{align}
when $i_1,\dots,i_m\ge0$ and $i_1+\cdots+i_m=n-m+1$. Intuitively, we
expect, from the above relations, that the node path length $N_n$
would have a strong correlation with $S_n$.

While one might ascribe this seemingly less intuitive result to the
possibly nonlinear dependence between $N_n$ and $S_n$, we enhance
such an uncorrelation by a stronger joint limit law for $(S_n, N_n)$
for $3\le m\le 26$, which further accents the asymptotic independence
between $N_n$ and $S_n$; for $m\ge27$, they are asymptotically
dependent and we will derive a precise characterization of their
joint asymptotic distributions. See Section~\ref{sec_bivariate} for a
more precise description of the joint asymptotic behaviors of
$(S_n,N_n)$ and $(S_n,K_n)$.

Let $\alpha$ denote the real part of the second largest zero (in real
parts) of the indicial equation $\Lambda(z)=0$, where
\begin{align}\label{indicial}
    \Lambda(z)
    = z(z+1)\cdots (z+m-2) - m!.
\end{align}
Then $\alpha<1$ for $m<14$ and $1<\alpha<\frac32$ for $14\le m\le
26$; see Table~\ref{tab-alpha}. Also $\alpha\to2$ as $m\to\infty$;
see \cite[Sec. 3.3]{mahmoud92} for more properties of $\alpha$.
\begin{table}[!ht]
\begin{center}
\begin{tabular}{|c||c|c|c|c|c|c|c|c|} \hline
$m$ & $3$ & $4$ & $5$ & $6$ & $7$ & $8$ & $9$ & $10$ \\ \hline
$\alpha$ & $-3$ & $-2.5$ & $-1.5$
& $-0.768$ & $-0.260$ & $0.101$
& $0.366$ & $0.568$ \\ \hline\hline
$m$ & $11$ & $12$ & $13$ & $14$ & $15$
& $16$ & $17$ & $18$ \\ \hline
$\alpha$ & $0.726$ & $0.852$ & \color{red}{$0.955$}
& \color{red}{$1.040$} & $1.112$ & $1.173$ & $1.226$ & $1.272$
\\ \hline\hline
$m$ & $19$ & $20$ & $21$ & $22$ & $23$
& $24$ & $25$ & $26$ \\ \hline
$\alpha$ & $1.313$ & $1.348$ & $1.380$ & $ 1.409$
& $1.435$ & $1.458$ & $1.479$ & \color{red}{$1.499$} \\ \hline
\end{tabular}
\end{center}
\vspace*{-.3cm}
\caption{\emph{Approximate numerical values of $\alpha=\alpha_m$
for $3\le m\le 26$.}}\label{tab-alpha}
\end{table}
The main reason that $\rho(S_n,N_n)\to0$ for $3\le m\le 26$ is
roughly that their covariance is of order $\max\{n\log
n,n^{\alpha}\}$ (see Theorem~\ref{rn_coas} below), while the standard
deviations for $S_n$ and $N_n$ are of orders $\sqrt{n}$ and $n$,
respectively. So that
\[
    \rho(S_n,N_n)
    = \begin{cases}
        O\left(n^{-\frac12}\log n\right),
            & \text{if } 3\le m\le 13;\\
        O\left(n^{-\frac32+\alpha}\right),
            & \text{if } 14\le m\le 26,
    \end{cases}
\]
which tends to zero in both cases. Briefly, \emph{the large quadratic
variance of $N_n$ is the major cause of the asymptotic independence
between $S_n$ and $N_n$ for $3\le m\le 26$}.

Such a change from being asymptotically independent to being
asymptotically dependent under a varying structural parameter is not
an exception. We will extend our study to fringe-balanced binary
search trees and quadtrees; a typical related instance states that:
\emph{the number of comparisons (or exchanges) used by the
median-of-$(2t+1)$ quicksort is asymptotically independent of the
number of partitioning stages when $0\le t\le 58$, but is
asymptotically dependent for $t\ge 59$.}

\section{$M$-ary search trees}

We briefly introduce $m$-ary search trees in this section and then
describe the random variables we are studying in this paper.

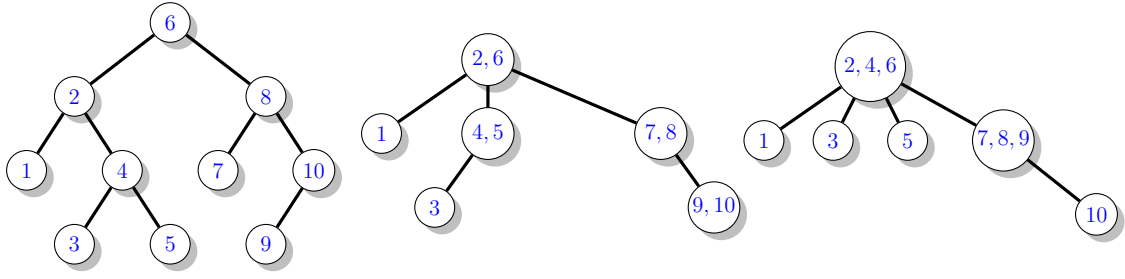
\begin{figure}[!ht]
\begin{center}
%%tikz start
\begin{tikzpicture}[scale=1,transform shape]
\node[] (P1) at (0,0) {
\begin{tikzpicture}[scale=0.7,transform shape,
every node/.style={inner sep=3pt,minimum size=0.75cm,text=blue,
draw,circle,fill=white,rounded corners=10pt,drop shadow},
]
\node []  (L1) at (0,0) {$6$};
\node [yshift=-1.4cm, xshift=-1.8cm]  (L21) at (L1) {$2$};
\node [yshift=-1.4cm, xshift=1.8cm]  (L22) at (L1) {$8$};
\node [yshift=-1.4cm, xshift=-0.9cm]  (L31) at (L21) {$1$};
\node [yshift=-1.4cm, xshift=0.9cm]  (L32) at (L21) {$4$};
\node [yshift=-1.4cm, xshift=-0.9cm]  (L33) at (L22) {$7$};
\node [yshift=-1.4cm, xshift=0.9cm]  (L34) at (L22) {$10$};
\node [yshift=-1.4cm, xshift=-0.9cm]  (L41) at (L32) {$3$};
\node [yshift=-1.4cm, xshift=0.9cm]  (L42) at (L32) {$5$};
\node [yshift=-1.4cm, xshift=-0.9cm]  (L43) at (L34) {$9$};

\draw[very thick] (L1) -- (L21);
\draw[very thick] (L1) -- (L22);
\draw[very thick] (L31) -- (L21);
\draw[very thick] (L32) -- (L21);
\draw[very thick] (L33) -- (L22);
\draw[very thick] (L34) -- (L22);
\draw[very thick] (L32) -- (L41);
\draw[very thick] (L32) -- (L42);
\draw[very thick] (L34) -- (L43);
\end{tikzpicture}
};

\node[] (P2) at (5,0) {
\begin{tikzpicture}[scale=0.7,
every node/.style={inner sep=3pt,minimum size=0.75cm,text=blue,
draw,fill=white,circle,drop shadow},
rect/.style={draw,scale=0.8, rectangle}
]
\node []  (L1) at (0,0) {$2,6$};
\node [yshift=-1.4cm, xshift=-2cm]  (L21) at (L1) {$1$};
\node [yshift=-1.4cm, xshift=0cm]  (L22) at (L1) {$4,5$};
\node [yshift=-1.4cm, xshift=3.25cm]  (L23) at (L1) {$7,8$};
\node [yshift=-1.4cm, xshift=-1cm]  (L31) at (L22) {$3$};
\node [inner sep=1pt,yshift=-1.4cm, xshift=1.0cm]
(L36) at (L23) {$9,10$};

\draw[very thick] (L1) -- (L21);
\draw[very thick] (L1) -- (L22);
\draw[very thick] (L1) -- (L23);
\draw[very thick] (L31) -- (L22);
\draw[very thick] (L36) -- (L23);
\end{tikzpicture}
};

\node[] (P3) at (10,0) {
\begin{tikzpicture}[scale=0.7,
every node/.style={inner sep=3pt,minimum size=0.75cm,text=blue,
draw,fill=white,circle,drop shadow},
rect/.style={draw,scale=0.8, rectangle}
]
\node []  (L1) at (0,0) {$2,4,6$};
\node [yshift=-1.4cm, xshift=-2cm]  (L21) at (L1) {$1$};
\node [yshift=-1.4cm, xshift=-0.7cm]  (L22) at (L1) {$3$};
\node [yshift=-1.4cm, xshift=0.7cm]  (L23) at (L1) {$5$};
\node [inner sep=1.25pt,yshift=-1.4cm, xshift=2.5cm]
(L24) at (L1) {$7,8,9$};
\node [yshift=-1.4cm, xshift=1.75cm]  (L34) at (L24) {$10$};

\draw[very thick] (L1) -- (L21);
\draw[very thick] (L1) -- (L22);
\draw[very thick] (L1) -- (L23);
\draw[very thick] (L1) -- (L24);
\draw[very thick] (L34) -- (L24);
\end{tikzpicture}
};

\end{tikzpicture}
\end{center}
\vspace*{-.3cm}
\caption{\emph{Three $m$-ary search trees for the sequence
$\{6,2,4,8,7,1,5,3,10,9\}$: $m=2$ (left), $m=3$ (middle), and
$m=4$ (right).}}\label{fig-3trees}
\end{figure}

An \emph{$m$-ary tree} is either empty or comprises of a single node
called the root, together with an ordered $m$-tuple of subtrees, each
of which is, by definition, an $m$-ary tree. Given a sequence of
numbers, say $\{x_1,\dots,x_n\}$, we construct an $m$-ary search tree
by the following procedure, $m\ge2$. If $1\le n<m$, then all keys are
stored in the root. If $n\ge m$ the first $m-1$ keys are sorted and
stored in the root, the remaining keys are directed to the $m$
subtrees, each corresponding to one of the $m$ intervals formed by
the $m-1$ sorted keys in the root node; see Figure~\ref{fig-3trees}
for an illustration (the rectangular nodes denote yet empty subtrees
of full nodes). If the $m-1$ numbers in the root are $x_{j_1}<
\cdots<x_{j_{m-1}}$, then the keys directed to the $i$th subtree all
have their values lying between $x_{j_{i-1}}$ and $x_{j_i}$, where
$x_{j_0}:=0$ and $x_{j_m}:=n+1$. All subtrees are themselves $m$-ary
search trees by definition. For more details, see Mahmoud
\cite{mahmoud92}.

While the practical usefulness of $m$-ary search trees is largely
overshadowed by their balanced counterparts such as $B$-trees, they
have been a source of many interesting phenomena, which are to some
extent universal. The study of $m$-ary search trees is thus of
fundamental and prototypical value. Furthermore, the close connection
between $m$-ary search trees and generalized quicksort adds an extra
dimension to the richness of diverse variations and their asymptotic
behaviors.

\subsection{Space requirement and total path lengths}

Assume that the input sequence $\{x_1,\dots,x_n\}$ is a random
permutation, where all $n!$ permutations are equally likely. The
resulting $m$-ary search tree constructed from the given sequence is
then called a random $m$-ary search tree. The major shape parameters
of particular algorithmic interest include the depth, the height, the
space requirement, the total path length, and the profile; see
\cite{drmota08,mahmoud92} for more information. We are concerned in
this paper with the following three random variables.

\begin{itemize}

\item $S_n$ (space requirement): the total number of nodes used to
store the input; the three trees in Figure~\ref{fig-3trees} have
$S_{10}$ equal to $10, 6, 6$, respectively. If $m=2$, then $S_n\equiv
n$; if $m\ge3$, we can compute $S_n$ recursively by $S_0=0$, and
\begin{align}\label{dis-sn}
    S_n
    \stackrel{d}{=}
    \begin{cases}
    1, & \text{if }1\le n<m,\\
    S_{I_1}^{(1)}
    +\cdots + S_{I_m}^{(m)} +1, & \text{if } n\ge m,
    \end{cases}
\end{align}
where the $S_i^{(r)}$'s are independent copies of $S_i$, $1\le r\le
m$, $0\le i\le n-m+1$, and independent of $(I_1,\ldots,I_m)$ defined
in \eqref{proba}.

\item $K_n$ (key path length, KPL): the sum of the distance between
the root and each key; for the trees in Figure~\ref{fig-3trees},
$K_{10}=\{19, 11,8\}$, respectively. For $m\ge2$, $K_n$ satisfies the
recurrence
\begin{align}\label{dis-xn}
    K_n
    \stackrel{d}{=}
    \begin{cases}
    0, & \text{if }n<m,\\
    K_{I_1}^{(1)}
    +\cdots + K_{I_m}^{(m)} +n-m+1, & \text{if } n\ge m,
    \end{cases}
\end{align}
where the $K_i^{(r)}$'s are independent copies of $K_i$, $1\le r\le
m, 0\le i\le n-m+1$, independent of $(I_1,\ldots,I_m)$.

\item $N_n$ (node path length, NPL): the sum of the distance between
the root and each node; so that $N_{10}=\{19, 7, 6\}$ for the three
trees in Figure~\ref{fig-3trees}. Obviously, $N_n=K_n$ when $m=2$.
When $m\ge3$,
\begin{align}\label{dis-nn}
    N_n
    \stackrel{d}{=}
    \begin{cases}
    0, & \text{if }n<m,\\
    N_{I_1}^{(1)}
    +\cdots + N_{I_m}^{(m)} + S_{I_1}^{(1)}
    +\cdots + S_{I_m}^{(m)}, & \text{if } n\ge m,
    \end{cases}
\end{align}
where the $(N_i^{(r)},S_i^{(r)})$'s are independent copies of
$(N_i,S_i)$, $1\le r\le m, 0\le i\le n-m+1$, independent of
$(I_1,\ldots,I_m)$.

\end{itemize}

While the first two random variables have been widely studied in the
literature, NPL was only considered previously in
\cite{broutin12,holmgren11} in connection with the process of cutting
trees. In addition to this, our interest was to understand the extent
to which the asymptotic independence for small $m$ between $S_n$ and
$K_n$ subsists when the ``toll function'' changes from a linear
function to a function that is random and may depend on $S_n$.

\subsection{A summary of known results}\label{known-results}

Let $H_m := \sum_{1\le j\le m}j^{-1}$. Knuth \cite[\S 6.2.4]{knuth98}
was the first to show that
\[
    \mathbb{E}(S_n)
    \sim \phi n,\quad\text{where}\quad
	\phi := \frac1{2(H_m-1)},
\]
(see also \cite{baeza-yates87}). Here $\phi$ denotes the ``occupancy
constant", which will appear all over our analysis. Mahmoud and
Pittel \cite{mahmoud89} improved the result and derived an identity
for $\mathbb{E}(S_n)$, which implies in particular that
\[
    \mathbb{E}(S_n)
    = \phi (n+1) - \frac1{m-1}
    + O\left(n^{\alpha-1}\right),
\]
where $\alpha$ has the same meaning as in Introduction; see
\eqref{indicial}. They also discovered and proved the surprising
result for the variance
\begin{align*}%\label{Sn-var}
    \mathbb{V}(S_n)
    \sim \begin{cases}
        C_S n,
            & \text{if } 3\le m\le 26;\\
        F_1(\beta\log n) n^{2\alpha-2},
            & \text{if } m\ge27,
    \end{cases}
\end{align*}
where $C_S$ is a constant depending on $m$, $F_1$ is a $\pi$-periodic
function given in \eqref{F1z}, $\alpha+i\beta$ is the second largest
zero (in real part) with $\beta>0$ of the equation $\Lambda(z)=0$
(see \eqref{indicial}), and $2\alpha-2>1$ for $m\ge27$. See also
\cite{dean02,janson08,majumdar05} for a closely related fragmentation
model with the same asymptotic behavior. A central limit theorem for
$S_n$ was then proved for $3\le m\le 26$ in \cite{lew94,mahmoud89};
see also \cite{mahmoud92} for more details. Their approach is based
on an inductive approximation argument.

By the method of moments, two authors of this paper re-proved in
\cite{chern01} the central limit theorem for $S_n$ when $3\le m\le
26$; the same approach was also used to establish the nonexistence of
a limit law for $S_n$ due to inherent oscillations. Moreover, the
convergence rates to the normal distribution were characterized in
\cite{hwang03} by a refined method of moments, which undergo further
change of behaviors.

Then several different approaches were developed in the literature
for a deeper understanding of the ``phase change" at $m=26$; these
include martingale \cite{chauvin04}, renewal theory \cite{janson08},
urn models \cite{janson04,mailler14}, contraction method
\cite{fill04,neininger04}, method of moments \cite{hwang03},
statistical physics \cite{dean02,majumdar05}, etc.

On the other hand, the KPL for general $m\ge 2$ was first studied by
Mahmoud \cite{mahmoud86} and he proved
\[
    \mathbb{E}(K_n)
    = 2\phi n\log n +c_1 n + o(n),
\]
for some explicitly computable constant $c_1$; see \eqref{c1}. The
variance was computed in \cite[\S 3.5]{mahmoud92} and satisfies
($H_m^{(2)}:= \sum_{1\le j\le m}j^{-2}$)
\begin{align}\label{VKn}
    \mathbb{V}(K_n) \sim C_K n^2,
    \quad\text{where}\quad
    C_K = 4\phi^2\left(
    \tfrac{(m+1)H_m^{(2)}-2}{m-1}
    -\tfrac{\pi^2}6\right).
\end{align}
The corresponding limit law was characterized in \cite{neininger99}
by the contraction method
\begin{align}\label{Kn-ll}
    \frac{K_n - \mathbb{E}(K_n)}{n}
    \stackrel{d}{\longrightarrow} K,
\end{align}
where $K$ is given by the recursive distributional equation
(\ref{rn1212}); see also \cite{broutin12,munsonius11} for a general
framework.

For NPL $N_n$, Broutin and Holmgren \cite{broutin12} proved that
\[
    \mathbb{E}(N_n) = 2\phi^2 n\log n + c_2n + o(n),
\]
for some constant $c_2$ (for which no numerical value was provided);
a series expression of $c_2$ is given in \cite[p.\ 156]{holmgren11}.
We will give an alternative proof of this result below with tools
from \cite{chern01,fill05a}. Our approach makes the computation of
$c_2$ feasible (although its exact value is not needed); see
\eqref{c2}.

It should be mentioned that there is a large literature on $K_n$ when
$m=2$ because it is identical to the comparison cost used by
quicksort. Many fine results were obtained; see, for example, the
recent papers \cite{bindjeme12,fill13,fuchs14a,
kabluchko14,neininger14,sulzbach14} and the references therein for
more information.

\subsection{Covariance, correlation, dependence and phase changes}

We state in this section our results for the covariance and
correlation between the space requirement and the total path lengths
(KPL and NPL). The proofs and the tools needed will be given in the
next sections.

Unlike the space requirement $S_n$ whose variance changes its
asymptotic behavior for $m\ge27$, the covariance $\Cov(S_n,\SK_n)$
changes its asymptotic behavior at $m=14$.

\begin{thm} \label{cov-snxn} The covariance between $S_n$ and $K_n$
satisfies
\[
    \Cov(S_n,\SK_n)
    \sim \begin{cases}
        C_Rn,
            &\text{if}\ 3\le m\le 13;\\
        F_2\left(\beta\log n\right)n^{\alpha},
            &\text{if}\ m\ge 14;
    \end{cases}
\]
where $C_R$ is a suitable constant and $F_2(z)$ is a $2\pi$-periodic
function given in \eqref{F2z} below.
\end{thm}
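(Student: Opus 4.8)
The plan is to reduce the problem to the standard divide-and-conquer recurrence of $m$-ary search trees and then invoke the known asymptotic transfer results for it.

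\emph{Step 1: a recurrence for the covariance.} Put $s_j:=\E(S_j)$, $k_j:=\E(K_j)$ and $c_n:=\Cov(S_n,K_n)$. Condition the coupled equations \eqref{dis-sn} and \eqref{dis-xn} on the \emph{same} split vector $(I_1,\dots,I_m)$ from \eqref{proba}: within the $r$th subtree the pair $(S^{(r)}_{I_r},K^{(r)}_{I_r})$ has the law of $(S_{I_r},K_{I_r})$ and distinct subtrees are independent. Expanding $S_nK_n$, using $\E(S_jK_j)=c_j+s_jk_j$ in the contribution of the $r$th subtree, taking expectations, and subtracting $\E(S_n)\E(K_n)$, the bilinear terms combine into $\bigl(\sum_r s_{I_r}\bigr)\bigl(\sum_r k_{I_r}\bigr)$, and a short computation gives, for $n\ge m$,
\begin{align*}
    c_n=\frac{m}{\binom{n}{m-1}}
        \sum_{0\le j\le n-m+1}\binom{n-1-j}{m-2}\,c_j+T_n,
    \qquad
    T_n:=\Cov\Bigl(\sum_{r=1}^m s_{I_r},\ \sum_{r=1}^m k_{I_r}\Bigr),
\end{align*}
with $c_n=0$ for $n<m$; here $T_n$ is the covariance with respect to $(I_1,\dots,I_m)$ only. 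This is exactly the recurrence to which the transfer theorems of \cite{chern01,fill05a} apply, now with a non-trivial toll.

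\emph{Step 2: estimating the toll.} I would next insert the known expansions $s_j=\phi(j+1)-\frac1{m-1}+e_S(j)$ with $e_S(j)=\Re\bigl(\kappa_S\,j^{\alpha-1+i\beta}\bigr)+o(j^{\alpha-1})$ for some constant $\kappa_S$ (Mahmoud--Pittel \cite{mahmoud89}, see also \cite{chern01}), and $k_j=2\phi j\log j+c_1 j+O(j^{1-\delta})$ for some $\delta>0$ (Mahmoud \cite{mahmoud86}). The key point is that $I_1+\dots+I_m=n-m+1$ is deterministic, so every summand of $\sum_r s_{I_r}$ and of $\sum_r k_{I_r}$ that is affine in the $I_r$'s contributes only a constant and drops out of $T_n$, and moreover $\sum_r I_r\log I_r=(n-m+1)\log n+\sum_r I_r\log(I_r/n)$ loses its $\log n$ part. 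Hence
\begin{align*}
    T_n=2\phi\,\Cov\Bigl(\sum_r e_S(I_r),\ \sum_r I_r\log(I_r/n)\Bigr)+o(n^{\alpha}),
\end{align*}
the error collecting the lower-order terms of $s_j$ and $k_j$ (which is $o(n^\alpha)$ since $\alpha<2$). Writing $U_r:=I_r/n$ and using that $(U_1,\dots,U_m)$ has a Dirichlet$(1,\dots,1)$ limit — the required moments of $U_r^{\alpha-1+i\beta}$ and of $U_r\log U_r$ against the exact weights \eqref{proba} being Beta integrals computable with explicit error terms — this yields
\begin{align*}
    T_n=\Re\bigl(c_T\,n^{\alpha+i\beta}\bigr)+o(n^{\alpha})
       =n^{\alpha}F_T(\beta\log n)+o(n^{\alpha})
\end{align*}
for an explicit constant $c_T$ and a $2\pi$-periodic $F_T$. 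In particular $T_n=O(n^{\alpha})$, which is $o(n)$ when $3\le m\le13$ (there $\alpha<1$) but is genuinely of order $n^{\alpha}$ with $\alpha>1$ when $m\ge14$.

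\emph{Step 3: asymptotic transfer, and the two regimes.} Feeding $T_n$ into the transfer theorem for the recurrence of Step 1 finishes the proof. Recall that in the exponent form of \eqref{indicial}, $n^z$ solves the homogeneous recurrence iff $\Lambda(z+1)=0$; its largest root is $z=1$ and the next real part is $\alpha-1$. If $3\le m\le13$, the toll is $O(n^{1-\delta})$ and is absorbed into the dominant mode $n$, exactly as the constant toll $1$ produces $\E(S_n)\sim\phi n$; the transfer theorem gives $c_n\sim C_Rn$ with $C_R$ the convergent linear functional of $(T_j)_{j\ge1}$ it supplies, and a direct check using the exact values of $s_j,k_j$ shows $C_R\ne0$. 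If $m\ge14$, then $\alpha>1$ while $\alpha+1\in(2,3)$ is \emph{not} a zero of $\Lambda$ (for real $x>2$ one has $x(x+1)\cdots(x+m-2)>m!$), so there is no resonance, and the transfer theorem yields the direct transfer
\begin{align*}
    c_n\sim\Bigl(1-\frac{m!}{(\alpha+1)(\alpha+2)\cdots(\alpha+m-1)}\Bigr)^{-1}T_n
        \sim n^{\alpha}F_2(\beta\log n),
\end{align*}
where $F_2$ is the $2\pi$-periodic function obtained from $F_T$ (equivalently, from $\kappa_S$ and the Beta-integral constants) through this complex multiplier; writing it out gives the closed form \eqref{F2z}.

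\emph{Main obstacle.} The crux of the argument is Step 2: converting the combinatorial identity for $T_n$ into the sharp estimate above — passing from the exact joint law \eqref{proba} of $(I_1,\dots,I_m)$ to its Dirichlet limit uniformly enough to retain the oscillating term $j^{\alpha-1+i\beta}$ with only an $o(n^{\alpha})$ error, and checking that the leading constants ($c_T$, hence $F_2$, and separately $C_R$) are nonzero so that the stated orders are exact. Once the toll is pinned down, Step 3 is a routine application of the transfer machinery; the only delicate point there is that the two regimes switch over precisely where $\alpha$ crosses $1$, i.e.\ between $m=13$ and $m=14$.
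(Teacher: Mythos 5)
Your proposal takes essentially the same route as the paper: both reduce $\Cov(S_n,K_n)$ to the recurrence $c_n=m\sum_{j}\pi_{n,j}c_j+T_n$ with $T_n=\Cov\bigl(\sum_r\mu_{I_r},\sum_r\kappa_{I_r}\bigr)$ (the paper via the centered joint moment-generating function, you by a direct expansion of $S_nK_n$, and your ``covariance of the toll'' reading makes the cancellation of the affine parts of $\mu_j,\kappa_j$ especially transparent), both then estimate $T_n$ via the Dirichlet/Beta limit of $(I_1,\dots,I_m)/n$ as in Lemma~\ref{Iab} and~\eqref{diri-int2}, and both close with Proposition~\ref{asymp-trans}. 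One caution in Step~3: the transfer multiplier of Proposition~\ref{asymp-trans}(ii) must be applied mode-by-mode to the conjugate pair $n^{\lambda_2}, n^{\lambda_3}$, so it is the complex $\bigl(1-m!\,\Gamma(\lambda_2+1)/\Gamma(\lambda_2+m)\bigr)^{-1}$ rather than the real expression you wrote with $\alpha$ in place of $\lambda_2$ --- you do flag that it is ``this complex multiplier,'' and it has to be, since that is precisely what produces the genuinely oscillating $F_2$ of~\eqref{F2z}.
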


This result has the following consequence.
\begin{cor}
The correlation coefficient between $S_n$ and $K_n$ satisfies
\[
    \rho(S_n,\SK_n) \begin{cases}
        \rightarrow 0, & \text{if } 3\le m\le 26;\\
        \sim \displaystyle
        \frac{F_2\left(\beta\log n\right)}
        {\sqrt{C_KF_1(\beta\log n)}}, & \text{if }
        m\ge27,
    \end{cases}
\]
where $C_K>0$ is given in \eqref{VKn}.
\end{cor}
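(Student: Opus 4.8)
The plan is to obtain the statement directly from Theorem~\ref{cov-snxn} by dividing the covariance asymptotics there by the square root of the product of the two variances. Beyond Theorem~\ref{cov-snxn}, the only inputs needed are the estimate $\mathbb{V}(\SK_n)\sim C_Kn^2$ from \eqref{VKn}, the Mahmoud--Pittel asymptotics of $\mathbb{V}(S_n)$ recalled in Section~\ref{known-results}, and the defining identity
\[
    \rho(S_n,\SK_n)=\frac{\Cov(S_n,\SK_n)}{\sqrt{\mathbb{V}(S_n)\,\mathbb{V}(\SK_n)}}.
\]
Because $\Cov(S_n,\SK_n)$ changes its order of growth at $m=14$ and $\mathbb{V}(S_n)$ changes it at $m=27$, while $\mathbb{V}(\SK_n)\sim C_Kn^2$ for every $m$, three ranges of $m$ have to be separated.

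First I would treat $3\le m\le 13$: there Theorem~\ref{cov-snxn} gives $\Cov(S_n,\SK_n)\sim C_Rn$, and with $\mathbb{V}(S_n)\sim C_Sn$ and $\mathbb{V}(\SK_n)\sim C_Kn^2$ one gets
\[
    \rho(S_n,\SK_n)\sim\frac{C_R}{\sqrt{C_SC_K}}\,n^{-1/2}\longrightarrow 0.
\]
For $14\le m\le 26$ one uses instead $\Cov(S_n,\SK_n)\sim F_2(\beta\log n)n^{\alpha}$ together with the same two variance estimates, which gives $\rho(S_n,\SK_n)\sim F_2(\beta\log n)(C_SC_K)^{-1/2}n^{\alpha-3/2}$; since $F_2$ is continuous and periodic, hence bounded, and $1<\alpha<\tfrac32$ throughout this range (see Table~\ref{tab-alpha}), the exponent is negative and the right-hand side again tends to $0$. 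These two subcases together establish the first line of the corollary for all $3\le m\le 26$.

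Finally, for $m\ge 27$ both $\Cov(S_n,\SK_n)$ and $\mathbb{V}(S_n)$ oscillate, and I would combine $\Cov(S_n,\SK_n)\sim F_2(\beta\log n)n^{\alpha}$ with $\mathbb{V}(S_n)\sim F_1(\beta\log n)n^{2\alpha-2}$ and $\mathbb{V}(\SK_n)\sim C_Kn^2$. The decisive point is the identity $\sqrt{n^{2\alpha-2}\cdot n^2}=n^{\alpha}$, which makes the powers of $n$ cancel exactly, leaving
\[
    \rho(S_n,\SK_n)\sim\frac{F_2(\beta\log n)}{\sqrt{C_KF_1(\beta\log n)}},
\]
the second line of the statement. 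No genuine difficulty arises in any of the three cases; the only points that deserve a word are that $C_S,C_K>0$ (recorded in Section~\ref{known-results} and \eqref{VKn}) and that $F_1$ stays bounded away from $0$, so that the quotient of the two oscillating functions is well defined and again periodic in $\beta\log n$ --- this holds because the asymptotic equivalence $\mathbb{V}(S_n)\sim F_1(\beta\log n)n^{2\alpha-2}$ together with $\mathbb{V}(S_n)>0$ forces $F_1>0$.
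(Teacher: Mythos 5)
Your proposal is correct and follows exactly the route the paper intends: the corollary is stated without a separate proof precisely because it is obtained by dividing the covariance asymptotics of Theorem~\ref{cov-snxn} by $\sqrt{\mathbb{V}(S_n)\mathbb{V}(K_n)}$ and tracking the three ranges $3\le m\le 13$, $14\le m\le 26$, $m\ge 27$, where the covariance, $\mathbb{V}(S_n)$, and $\mathbb{V}(K_n)$ each keep or change order as you describe. The one point where your write-up is a little glib is the final claim that the asymptotic equivalence $\mathbb{V}(S_n)\sim F_1(\beta\log n)n^{2\alpha-2}$ together with $\mathbb{V}(S_n)>0$ ``forces'' $F_1>0$: as usually interpreted, $a_n\sim F(\beta\log n)n^{\kappa}$ means $a_n=F(\beta\log n)n^{\kappa}+o(n^{\kappa})$, which does not by itself prevent $F$ from vanishing somewhere; that $F_1$ is in fact strictly positive is a separate analytic (or numerical) fact about the explicit formula \eqref{F1z} which the paper takes for granted. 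This is a side remark rather than a gap in the main argument.
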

See Figure~\ref{fig-corr-coeff} for two different plots for the
periodic functions when $m\ge27$.

The same consideration extends easily to clarify the correlation
between space requirement and NPL.
\begin{thm}\label{rn_coas}
The covariance between $S_n$ and $N_n$ satisfies
\[
    \Cov(S_n,N_n)
    \sim
    \begin{cases}
        2\phi C_Sn\log n,
            &\text{if}\ 3\le m\le 13;\\
        \phi F_2\left(\beta\log n\right)n^{\alpha},
            &\text{if}\ m\ge 14,
    \end{cases}
\]
where $C_S$ is as in Section \ref{known-results}. Moreover, the
variance of $N_n$ satisfies
\[
    \V(\SN_n)
    \sim \phi^2C_K n^2.
\]
\end{thm}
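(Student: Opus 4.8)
The plan is to reduce everything to the already-established asymptotics for $S_n$ and $K_n$ by exploiting the algebraic relation between the three parameters. Taking expectations in \eqref{dis-xn} and \eqref{dis-nn} and comparing the recurrences, one sees that $N_n$ satisfies the same type of recurrence as $K_n$ but with the deterministic toll $n-m+1$ replaced by the random toll $S_{I_1}^{(1)}+\cdots+S_{I_m}^{(m)}$. Since $\E(S_{I_1}^{(1)}+\cdots+S_{I_m}^{(m)}) \sim \phi(n-m+1)$ by the mean asymptotics of Mahmoud and Pittel quoted in Section~\ref{known-results}, the ``$N$-recurrence is, to first order, a $\phi$-scaled copy of the $K$-recurrence''; this is precisely the heuristic behind $\E(N_n)\sim 2\phi^2 n\log n$ and $\V(N_n)\sim\phi^2 C_K n^2$. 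To make this rigorous I would introduce the centered variables $\bar S_n := S_n-\E(S_n)$, $\bar K_n := K_n-\E(K_n)$, $\bar N_n := N_n-\E(N_n)$ and write down the bivariate recurrence that $(\bar S_n,\bar N_n)$ — or more conveniently the triple $(\bar S_n,\bar K_n,\bar N_n)$ — satisfies, then feed the resulting recurrences for the second-order quantities $\E(\bar S_n^2)$, $\E(\bar S_n\bar N_n)$, $\E(\bar N_n^2)$ into the asymptotic-transfer machinery for the underlying $m$-ary search tree recurrence (the same toolbox used in \cite{chern01,fill05a,hwang03} that produces the dichotomy at $m=14$ versus $m=26$).

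Concretely, I would proceed in the following steps. First, derive an exact recurrence for $c_n := \Cov(S_n,N_n)$ by squaring/multiplying out \eqref{dis-sn}, \eqref{dis-nn}, using the exchangeability of the subtree sizes $(I_1,\dots,I_m)$ and the independence of the subtree copies. The toll term of this recurrence will be a linear combination of $\E(S_n)$, $\Cov(S_n,K_n)$-type cross terms arising from the $S$-part of the $N$-recurrence, plus lower-order contributions; the key point is that the ``$S$ inside $N$'' produces exactly a $\Cov(S_n, S_n)=\V(S_n)$-flavored contribution scaled by $\phi$, hence the $2\phi C_S n\log n$ when $3\le m\le 13$ and the $\phi F_2(\beta\log n)n^\alpha$ when $m\ge14$ (the same $F_2$ as in Theorem~\ref{cov-snxn}, which is no coincidence: the dominant singular behavior is governed by the same second zero of $\Lambda(z)$). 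Second, solve this recurrence via asymptotic transfer: the homogeneous part is the standard $m$-ary operator whose spectrum is controlled by $\Lambda(z)=0$, and the particular solution is read off from the toll. For $3\le m\le 13$ the toll of order $n$ (from $\V(S_n)\sim C_S n$, picked up linearly) transfers to $\Theta(n\log n)$; for $m\ge14$ the toll of order $n^\alpha$ with $\alpha>1$ dominates and transfers to the same order with the periodic amplitude. Third, the variance statement $\V(N_n)\sim\phi^2 C_K n^2$: here I would either repeat the recurrence computation for $\E(\bar N_n^2)$ — whose toll is dominated by a $\phi^2 n^2$-order term coming from the variance of the $S$-toll sum, matching the $n^2$-order self-similar fixed-point behavior — or, more slickly, obtain it as a by-product of the contraction-method limit law for $(S_n,N_n)$ established later in the paper, since $\E(N_n)\sim 2\phi^2 n\log n$ and the limit of $(N_n-\E N_n)/n$ is $\phi$ times the limit $K$ in \eqref{Kn-ll} (the $S$-contributions being of smaller order $\sqrt n$), whence $\V(N_n)/n^2 \to \phi^2 \V(K) = \phi^2 C_K$ provided uniform integrability of $(N_n-\E N_n)^2/n^2$ is available from the moment recurrence.

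The main obstacle I anticipate is bookkeeping the toll function of the $\Cov(S_n,N_n)$-recurrence precisely enough to isolate its leading term, in particular verifying that the constant in front is exactly $2\phi C_S$ (and not some other multiple) in the $3\le m\le 13$ range, and that the periodic function in the $m\ge14$ range is exactly $\phi$ times the $F_2$ of Theorem~\ref{cov-snxn}. This requires carefully tracking the cross-term $\sum_r \E(S_{I_r}^{(r)} N_{I_r}^{(r)}) + \sum_r\sum_{s\ne r}\E(S_{I_r}^{(r)})\E(S_{I_s}^{(s)})$ versus $\E(S_n)\E(N_n)$ and showing the $\Theta(n^2\log n)$-type pieces cancel in the centering, leaving only the advertised order. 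Once the toll is pinned down, the transfer step is routine given the results already cited, and the two cases ($m\le13$ with the extra logarithmic factor from resonance of the $n$-order toll against the operator, $m\ge14$ with the supercritical $n^\alpha$ toll) fall out of the standard singularity/indicial analysis exactly as in the proof of Theorem~\ref{cov-snxn}.
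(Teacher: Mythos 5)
Your proposal follows the paper's proof (Section~\ref{tnpl}) essentially verbatim: center $(S_n,N_n)$, derive the moment recurrence for $\Cov(S_n,N_n)$ and $\V(N_n)$, identify the dominant toll term ($V_n^{[S]}$ for $3\le m\le 13$, the $\Delta_{\mathbf{j}}\delta_{\mathbf{j}}$ cross-term for $m\ge 14$, and $\delta_{\mathbf{j}}^2$ for the variance), and feed it into Proposition~\ref{asymp-trans}. One bookkeeping caution that matters for the constant you flagged as the main obstacle: the toll for $\Cov(S_n,N_n)$ when $3\le m\le 13$ is $\sim C_S n$, not $\phi C_S n$ — the factor $2\phi$ in $2\phi C_S n\log n$ is produced by the transfer $cn\mapsto 2c\phi nH_n$ of Proposition~\ref{asymp-trans}(i), whereas the lone $\phi$ for $m\ge 14$ and the $\phi^2$ in $\V(N_n)$ come from the uniform estimate $\delta_{\mathbf{j}}\sim\phi\nabla_{\mathbf{j}}$ (Lemma~\ref{est-d}), so the phrase ``scaled by $\phi$'' conflates two distinct sources and would give the wrong multiple if taken literally.
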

Notice the appearance of an extra $\log n$ factor when $3\le m\le
13$, which reflects the additional random effect introduced by the
toll function in \eqref{dis-nn}. These estimates imply the following
consequence.

\begin{cor} The correlation coefficient $\rho(S_n,\SN_n)$ satisfies
\[
    \rho(S_n,N_n) \begin{cases}
        \rightarrow 0, & \text{if } 3\le m\le 26;\\
        \sim \rho(S_n,\SK_n) \sim \displaystyle
        \frac{F_2\left(\beta\log n\right)}
        {\sqrt{C_K F_1(\beta\log n)}}, & \text{if }
        m\ge27.
    \end{cases}
\]
\end{cor}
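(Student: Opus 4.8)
The plan is to read off $\rho(S_n,N_n)$ directly from Theorem~\ref{rn_coas} together with the variance asymptotics recalled in Section~\ref{known-results}, by writing $\rho(S_n,N_n)=\Cov(S_n,N_n)\big/\sqrt{\V(S_n)\V(N_n)}$ and treating the three ranges $3\le m\le 13$, $14\le m\le 26$ and $m\ge 27$ separately. The ingredients are: $\Cov(S_n,N_n)\sim 2\phi C_Sn\log n$ or $\sim\phi F_2(\beta\log n)n^\alpha$ and $\V(N_n)\sim\phi^2C_Kn^2$ from Theorem~\ref{rn_coas}; and $\V(S_n)\sim C_Sn$ for $3\le m\le 26$, $\V(S_n)\sim F_1(\beta\log n)n^{2\alpha-2}$ for $m\ge 27$.

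For $3\le m\le 13$, substituting the three estimates makes the factors $\phi$ and $C_S$ cancel, leaving a quantity of order $n^{-1/2}\log n\to 0$. For $14\le m\le 26$, the numerator is now $\phi F_2(\beta\log n)n^\alpha$ while the denominator is still of order $n^{3/2}$, so the ratio is $\big(F_2(\beta\log n)/\sqrt{C_SC_K}\big)\,n^{\alpha-3/2}$; since $F_2$ is bounded (a convergent Fourier series, hence continuous and periodic) and $\alpha<\tfrac32$ throughout this range by Table~\ref{tab-alpha}, this again tends to $0$. For $m\ge 27$, the numerator $\phi F_2(\beta\log n)n^\alpha$ is divided by $\sqrt{F_1(\beta\log n)n^{2\alpha-2}\cdot\phi^2C_Kn^2}=\phi\sqrt{C_KF_1(\beta\log n)}\,n^\alpha$, so the powers of $n$ and the factor $\phi$ cancel exactly and one is left with $F_2(\beta\log n)/\sqrt{C_KF_1(\beta\log n)}$. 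Comparing with the corollary to Theorem~\ref{cov-snxn} identifies this with the asymptotic form of $\rho(S_n,K_n)$, yielding the asserted chain $\rho(S_n,N_n)\sim\rho(S_n,K_n)$.

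There is essentially no obstacle once Theorem~\ref{rn_coas} is available; the only points deserving a word are that $F_1$ stays bounded away from $0$, so that the quotient in the $m\ge 27$ case is well defined and genuinely $2\pi$-periodic (this holds because $F_1(\beta\log n)n^{2\alpha-2}$ is the leading term of a non-degenerate variance), and that the oscillations in numerator and denominator are driven by the same $\beta$, the imaginary part of the second zero of $\Lambda(z)$, so that no incommensurate frequency is created when forming the ratio. With these remarks the corollary follows by collecting the three cases. \qed
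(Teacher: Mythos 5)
Your argument is correct and is exactly the computation implicit in the paper, which presents the corollary as a direct consequence of Theorem~\ref{rn_coas} together with the known asymptotics of $\V(S_n)$ and $\V(N_n)$. One trivial slip in wording: in the case $3\le m\le 13$ the constant $C_S$ does not cancel completely (only $\phi$ does, and $C_S$ becomes $\sqrt{C_S/C_K}$), but the stated order $n^{-1/2}\log n$ and the conclusion $\rho(S_n,N_n)\to 0$ are unaffected; the identification with $\rho(S_n,K_n)$ for $m\ge 27$ and the positivity remarks on $F_1$ are fine.
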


The last relation suggests considering the correlation between
$K_n$ and $N_n$.
\begin{cor} The random variable $K_n$ is asymptotically linearly
correlated to $N_n$
\[
    \rho(K_n,N_n)\to 1.
\]
\end{cor}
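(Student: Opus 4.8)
The plan is to deduce $\rho(K_n,N_n)\to 1$ from the three facts already in hand: the variance asymptotics $\V(K_n)\sim C_K n^2$ and $\V(N_n)\sim\phi^2 C_K n^2$ (the latter from Theorem~\ref{rn_coas}), together with a matching lower bound on $\Cov(K_n,N_n)$ of the same quadratic order $\phi C_K n^2$. Since $\rho(K_n,N_n)=\Cov(K_n,N_n)/\sqrt{\V(K_n)\V(N_n)}$ and the denominator is $\sim\phi C_K n^2$, it suffices to show $\Cov(K_n,N_n)\sim\phi C_K n^2$; then the ratio tends to $1$. So the entire task reduces to computing the leading term of the covariance.

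First I would set up the recurrence for $\Cov(K_n,N_n)$ from the joint distributional recursions \eqref{dis-xn} and \eqref{dis-nn}. Writing $\tilde K_n=K_n-\E(K_n)$ and $\tilde N_n=N_n-\E(N_n)$, and using the toll terms $n-m+1$ for $K_n$ and $S_{I_1}^{(1)}+\cdots+S_{I_m}^{(m)}$ for $N_n$, one gets a linear recurrence of the familiar $m$-ary-search-tree type for $c_n:=\Cov(K_n,N_n)$, whose inhomogeneous part is built from $\E(K_n)$, $\E(N_n)$, the already-known quantities $\E(S_n)\sim\phi n$ and $\V(S_n)$, and the cross term $\Cov(K_n,S_n)$ controlled by Theorem~\ref{cov-snxn}. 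The key observation is that $N_n$ behaves, to leading order, like $\phi$ times a ``$K$-like'' quantity: the toll $\sum_r S_{I_r}^{(r)}$ has expectation $\sim\phi(n-m+1)$, i.e.\ $\phi$ times the toll driving $K_n$. Hence on the scale $n^2$ the fluctuations of $N_n$ and of $\phi K_n$ are driven by the same source (the splitting proportions $I_r/n$), and the covariance picks up the full product of the standard deviations.

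Concretely, I would either (i) apply the asymptotic transfer theorem for such recurrences (as used elsewhere in the paper) to read off $c_n\sim\phi C_K n^2$ directly from the leading behavior of the inhomogeneous term, or (ii) argue at the level of limit laws: divide \eqref{dis-xn} and \eqref{dis-nn} by $n$ and pass to the contraction-method fixed-point equations. By \eqref{Kn-ll}, $\tilde K_n/n\to K$ satisfying \eqref{rn1212}; an entirely parallel computation (the toll $\sum_r S_{I_r}^{(r)}/n\to\phi\sum_r V_r$ with the same Dirichlet weights $V_r$) shows $\tilde N_n/n\to\phi K$ jointly, so that $(\tilde K_n/n,\tilde N_n/n)\to(K,\phi K)$, a degenerate pair supported on a line. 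Provided the second moments converge along with the distributions (uniform integrability of $(\tilde K_n/n)^2$ and $(\tilde N_n/n)^2$, which follows from the $L^2$ boundedness implicit in the variance estimates), this gives $\Cov(K_n,N_n)/n^2\to\E(K\cdot\phi K)=\phi\,\V(K)=\phi C_K$, as desired.

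The main obstacle is justifying the passage from convergence in distribution to convergence of the (co)variance, i.e.\ the uniform-integrability/$L^2$-convergence step — this is where one needs the quantitative variance bounds rather than just the weak limits. A clean way around it is to stay purely at the level of the recurrence and invoke the asymptotic transfer machinery already developed in the paper for $\V(K_n)$, $\V(N_n)$ and $\Cov(S_n,K_n)$: the recurrence for $c_n$ has the same structure, its inhomogeneity is $\sim$ (constant)$\,n^2$ with the constant forced to be $\phi C_K$ by matching against the $K_n$ case scaled by $\phi$, and the transfer theorem then yields $c_n\sim\phi C_K n^2$ with no extra analytic input. Either route closes the argument.
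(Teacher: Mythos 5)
Your route (i) --- setting up the recurrence for $\Cov(K_n,N_n)$, observing that its toll is $\sim\phi$ times the toll for $\V(K_n)$ because the centered $N$-toll satisfies $\delta_{\mathbf{j}}\sim\phi\,\nabla_{\mathbf{j}}$ (compare Lemmas~\ref{est-dn} and \ref{est-d}), and applying the asymptotic transfer theorem to obtain $\Cov(K_n,N_n)\sim\phi C_K n^2$ hence $\rho(K_n,N_n)\to1$ --- is exactly the argument the paper carries out in Section~\ref{app-univariate}. You also correctly flag the uniform-integrability obstruction in route (ii); the paper likewise sidesteps it by working at the level of the recurrence for the covariance (reserving the weak-limit argument for the joint convergence $(K,\phi K)$ rather than for the correlation itself).
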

Indeed, we will show that
\[
\| N_n-\phi K_n-({\mathbb E}(N_n-\phi K_n))\|_2=o(n)
\]
which then by Slutsky's theorem implies that
\[
    \left(\frac{K_n-{\mathbb E}(K_n)}{n},\frac{N_n -\mathbb{E}(N_n)}{n}\right)
    \stackrel{d}{\longrightarrow} (K,\phi K);
\]
see \eqref{Kn-ll}, Section \ref{univariate} and \ref{app-univariate}.

These results will be proved by working out the asymptotics of the
corresponding recurrence relations, which all have the same form
\[
    a_n
    =m\sum_{0\le j\le n-m+1}\pi_{n,j}a_j+b_n,
    \qquad (n\ge m-1),
\]
where
\[
    \pi_{n,j}
    =\frac{\binom{n-1-j}{m-2}}{\binom{n}{m-1}}
    \qquad(0\le j\le n-m+1)
\]
is a probability distribution, and $\{b_n\}$ is a given sequence
(referred to as the toll-function). For that asymptotic purpose, our
key tools will rely on the \emph{asymptotic transfer techniques} (see
\cite{chern01,fill05a}), which provide a direct asymptotic
translation from the asymptotic behaviors of $b_n$ to those of $a_n$.
The remaining analysis will then consist of simplifying some multiple
Dirichlet's integrals.

Since Pearson's product-moment correlation coefficient $\rho$ is
known to be poor in measuring nonlinear dependence between two random
variables, we go further by considering the joint limit laws for
$(S_n,K_n)$ and $(S_n,N_n)$, which exhibit a change of behavior
depending on whether $3\le m\le 26$ (convergent case) or $m\ge27$
(periodic case): they are asymptotically independent in the former
case but dependent in the latter.

\begin{thm}\label{thm13} Assume $3\le m\le 26$. Let $(X_n)_n \in
\{(K_n)_n,(N_n)_n\}$ and $Q_n=(X_n,S_n)$ denote the vector of KPL or
NPL and the space requirement used by a random $m$-ary search tree.
Then the convergence in distribution holds:
\begin{align}\label{thmnc}
    \Cov(Q_n)^{-1/2}(Q_n-\E[Q_n])
    \stackrel{d}{\longrightarrow} (X,\mathscr{N}),
\end{align}
where $\mathscr{N}$ has the standard normal distribution and the
limit law $(X,\mathscr{N})$ is described in Lemma~\ref{fpm23};
moreover, $X$ and $\mathscr{N}$ are independent.
\end{thm}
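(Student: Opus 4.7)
The plan is to apply the contraction method directly to the bivariate vector $Q_n=(X_n,S_n)$, following the multivariate framework of Neininger--R\"uschendorf (as used in \cite{neininger04}) for limit laws whose coordinates live on different scales. As a preliminary reduction, note that $\V(X_n)\sim c_X n^2$ with $c_X\in\{C_K,\phi^2 C_K\}$, $\V(S_n)\sim C_S n$, and $\Cov(X_n,S_n)=o(n^{3/2})$ in the range $3\le m\le 26$, by Theorems~\ref{cov-snxn} and~\ref{rn_coas}. The off-diagonal entries of $\Cov(Q_n)^{-1/2}$ are therefore of order $\Cov(X_n,S_n)/\bigl(n^{3/2}\sqrt{c_X C_S}\bigr)=o(1)$, and it suffices to prove the joint weak convergence
\[
    \left(\frac{X_n-\E X_n}{n},\,\frac{S_n-\E S_n}{\sqrt{n}}\right)
    \stackrel{d}{\longrightarrow}\bigl(\sqrt{c_X}\,X,\,\sqrt{C_S}\,\mathscr{N}\bigr),
\]
with $X$ and $\mathscr{N}$ independent; the conclusion \eqref{thmnc} then follows by multiplying by the asymptotically diagonal matrix $\Cov(Q_n)^{1/2}\Cov(Q_n)^{-1/2}$-normalization.

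To establish this joint convergence, I would recast \eqref{dis-sn}, \eqref{dis-xn}, and \eqref{dis-nn} as a single bivariate recursive distributional equation for $Q_n-\E Q_n$, with diagonal scaling matrices
\[
    A_r^{(n)}=\begin{pmatrix}I_r/n & 0\\ 0 & \sqrt{I_r/n}\end{pmatrix},\qquad 1\le r\le m,
\]
and a deterministic bivariate toll absorbing the residual mean corrections (augmented by an $S$-contribution when $X_n=N_n$). Invoking the standard Zolotarev-metric contraction framework, I would verify: (i) the matrices $A_r^{(n)}$ converge in the relevant $L^s$-norm to $\mathrm{diag}(V_r,\sqrt{V_r})$, where $(V_1,\ldots,V_m)$ is the Dirichlet limit of $(I_1/n,\ldots,I_m/n)$ coming from \eqref{proba}; (ii) the rescaled bivariate toll converges in the same Zolotarev metric to the appropriate limit toll; and (iii) the matrix contraction condition. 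Step (iii) is essentially the conjunction of the univariate contraction conditions already established in the literature for $K_n$ and for $S_n$, and the restriction $m\le 26$ enters precisely through the $S$-coordinate, matching the Gaussian regime described in Section~\ref{known-results}.

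The limit fixed-point equation then \emph{decouples} into two uncoupled univariate equations, one per coordinate: the $X$-equation coincides with the one defining the limit $K$ in \eqref{Kn-ll} (or, for $X_n=N_n$, the equation satisfied by $\phi K$), while the $S$-equation reads $\mathscr{N}\stackrel{d}{=}\sum_r\sqrt{V_r}\,\mathscr{N}^{(r)}$, whose unique centered, unit-variance solution is the standard normal. Choosing the inputs $(X^{(r)})$ and $(\mathscr{N}^{(r)})$ independent in the joint recursion produces a bivariate fixed point whose marginals are independent (since, conditional on the shared weights $(V_r)$, the two components are independent, and moreover the conditional law of $\mathscr{N}$ does not depend on $(V_r)$ because $\sum_r V_r=1$). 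Uniqueness of the fixed point in the Zolotarev metric then forces $X$ and $\mathscr{N}$ to be independent in the limit.

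The main technical obstacle lies in step (ii): controlling the rescaled bivariate toll under two different normalization rates. Because the $X$-coordinate is divided by $n$ while the $S$-coordinate is divided only by $\sqrt{n}$, the cross-component contributions in the toll live on an intermediate scale, and showing that they vanish in the Zolotarev metric reduces once more to the $o(n^{3/2})$ bound for $\Cov(X_n,S_n)$ from Theorems~\ref{cov-snxn} and~\ref{rn_coas}. A secondary technical point is justifying the asymptotic diagonalization of $\Cov(Q_n)^{-1/2}$ in operator norm via a perturbative expansion, which rests on the same covariance bound; together these two estimates are precisely the mechanism by which the large quadratic variance of $X_n$ absorbs the $X$--$S$ coupling and forces asymptotic independence for $3\le m\le 26$.
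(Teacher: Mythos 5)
Your proposal follows essentially the same route as the paper: a bivariate contraction argument in the Zolotarev metric $\zeta_3$ (Theorem 4.1 of \cite{neininger04}) with limit coefficient matrices $\mathrm{diag}(V_r,\sqrt{V_r})$, the contraction condition $m\E[V_1^{3/2}]<1$, the reduction of the $\Cov(Q_n)^{-1/2}$ normalization to the diagonal one via $\Cov(\widetilde Q_n)\to\mathrm{Id}_2$, and independence of the limit components obtained exactly as in Lemma~\ref{fpm23}, namely from the fact that, conditionally on $(V_1,\dots,V_m)$, the second coordinate is $\mathcal{N}(0,\sum_r V_r)=\mathcal{N}(0,1)$ independently of the weights, combined with uniqueness of the fixed point. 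Your only deviation is cosmetic: you exhibit the product measure directly as a fixed point and invoke uniqueness, where the paper runs Banach's theorem on the closed invariant subspace $\mathcal{G}$ of product measures with standard normal second marginal --- the underlying computation (the paper's display \eqref{rn0116}) is the same.
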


\begin{thm}\label{thm12}
Assume $m\ge 27$. Let $(X_n)_n \in \{(K_n)_n,(N_n)_n\}$ and
\begin{align*}%\label{norma}
    Y_n
    :=\left(\frac{X_n-\E[X_n]}{\iota_Xn},
        \frac{S_n-\phi n}{n^{\alpha-1}}\right)
\end{align*}
with $\iota_X=1$ for $(X_n)_n=(N_n)_n$ and $\iota_X=\phi^{-1}$ for
$(X_n)_n=(K_n)_n$. Then we have
\begin{align*}
    \ell_2(Y_n, (X,\Re(n^{i\beta} \Lambda)))
    \to 0,
\end{align*}
where $\beta$ is as in Section \ref{known-results} and $(X,\Lambda)$
is a random vector whose distribution is specified as the unique
fixed point solution appearing in Lemma~\ref{fplem1} for the choice
$\gamma=(0,\theta)$ ($\theta$ being defined below in (\ref{exp_mu})).
\end{thm}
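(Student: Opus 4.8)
The plan is to establish the slightly stronger \emph{honest} convergence of the complexified vector
\begin{align*}
    \tilde Y_n
    :=\left(\frac{X_n-\E[X_n]}{\iota_Xn},\;
        \frac{S_n-\phi n}{n^{\alpha-1+i\beta}}\right)\in\R\times\C
\end{align*}
towards $(X,\Lambda)$ in the minimal $\ell_2$-metric, and then to read off the statement by applying the map $z\mapsto\Re(n^{i\beta}z)$ to the second coordinate: this map is $1$-Lipschitz uniformly in $n$, $\ell_2$ does not increase under uniformly Lipschitz maps, and $(X,\Re(n^{i\beta}\Lambda))$ is the image of $(X,\Lambda)$ under the same map, so $\ell_2\bigl(Y_n,(X,\Re(n^{i\beta}\Lambda))\bigr)\le C\,\ell_2(\tilde Y_n,(X,\Lambda))\to0$. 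Finiteness of $\ell_2(\tilde Y_n,\cdot)$ follows from $\V(S_n)=\bo(n^{2\alpha-2})$, $(\E(S_n)-\phi n)^2=\bo(n^{2\alpha-2})$ (Theorem~\ref{rn_coas} and the expansion of $\E(S_n)$ recalled in Section~\ref{known-results}) and $\V(X_n)=\bo(n^2)$. As a preliminary I would sharpen the mean asymptotics by the asymptotic transfer of Section~\ref{known-results} (as in \cite{chern01,fill05a}), obtaining for $m\ge27$ an expansion $\E(S_n)-\phi n=c_0+\Re(\theta\,n^{\alpha-1+i\beta})+o(n^{\alpha-1})$ with $c_0=\phi-\tfrac1{m-1}$ and $\theta$ the constant of \eqref{exp_mu}; this explains why $Y_n$ is centred at $\phi n$ rather than at $\E(S_n)$ — the residual periodic term is exactly what forces $\E[\Lambda]=\theta$, whence the choice $\gamma=(0,\theta)$ in Lemma~\ref{fplem1}, the first component $0$ reflecting $\E[X]=0$ (in both cases $X=\phi K$ with $K$ the centred KPL limit of \eqref{Kn-ll}, the two choices of $\iota_X$ producing the \emph{same} first-coordinate limit; see Section~\ref{univariate}).

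Next I would translate the recurrences \eqref{dis-sn}, \eqref{dis-xn}, \eqref{dis-nn}, \eqref{proba} into a bivariate recursive distributional equation
\begin{align*}
    \tilde Y_n
    \stackrel{d}{=}\sum_{1\le r\le m}A_r(n)\,\tilde Y_{I_r}^{(r)}+b_n,
    \qquad
    A_r(n)=\mathrm{diag}\!\left(\tfrac{I_r}{n},\,
    \bigl(\tfrac{I_r}{n}\bigr)^{\alpha-1+i\beta}\right),
\end{align*}
with the $\tilde Y_j^{(r)}$ independent copies of $\tilde Y_j$, independent of $(I_1,\dots,I_m)$, and $b_n$ an explicit toll vector. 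The crucial point is $\|b_n\|_2\to0$: its second component is $(c_0+o(n^{\alpha-1}))/n^{\alpha-1+i\beta}=o(1)$ because $\alpha-1>0$; its first component gathers (i) the $\bo(1)$ deterministic discrepancy produced by the linear toll $n-m+1$ together with the mean expansions, which converges, and — in the NPL case — (ii) the genuinely random term $n^{-1}\sum_r(S_{I_r}^{(r)}-\phi I_r)$, whose $L_2$-norm is $\bo(n^{\alpha-1}/n)=o(1)$ \emph{precisely because $\alpha-1<1$}. Thus the coarse feedback of $S_n$ into $N_n$ visible in \eqref{S-N} becomes, after the right scaling, asymptotically invisible in the first coordinate. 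Since $(I_1,\dots,I_m)/n$ converges jointly to the Dirichlet$(1,\dots,1)$ vector $(D_1,\dots,D_m)$ (the $m$ spacings of $m-1$ i.i.d.\ uniforms), we get $A_r(n)\to A_r:=\mathrm{diag}(D_r,D_r^{\alpha-1+i\beta})$ in $L_2$.

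Finally I would run the contraction argument. On the space of laws on $\R\times\C$ with finite second moment and first moments $(0,\theta)$, define $T\mathcal L(\xi)=\mathcal L\bigl(\sum_rA_r\xi^{(r)}+b\bigr)$, where $b$ is the $\ell_2$-limit of $b_n$ (the centred linear-toll contribution, nontrivial, in the first coordinate, cf.\ \eqref{rn1212}; zero in the second). In $\ell_2$ the map $T$ contracts with factor at most $\max\bigl\{(m\,\E[D_1^2])^{1/2},(m\,\E[D_1^{2\alpha-2}])^{1/2}\bigr\}$. Using $m\,\E[D_1^z]=m!/\prod_{1\le k\le m-1}(z+k)$, the first factor equals $(2/(m+1))^{1/2}<1$, while the second equals $\bigl(m!/\prod_{0\le k\le m-2}(2\alpha-1+k)\bigr)^{1/2}$, which is $<1$ \emph{exactly because $\alpha>\tfrac32$, i.e.\ $m\ge27$}: each factor $2\alpha-1+k$ dominates the corresponding factor $2+k$ of $m!=\prod_{0\le k\le m-2}(2+k)$. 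This is the contraction underlying the existence and uniqueness of the fixed point of Lemma~\ref{fplem1} for $\gamma=(0,\theta)$, which we name $(X,\Lambda)$. With the fixed point available, $\ell_2(\tilde Y_n,(X,\Lambda))\to0$ follows from the standard contraction scheme (cf.\ \cite{fill04,neininger04}): a coupling of $A_r(n)$ with $A_r$, the toll estimate $\|b_n\|_2\to0$, the a priori bound $\sup_j\ell_2(\tilde Y_j,(X,\Lambda))<\infty$ from the moment estimates above, control of the ``small-subtree'' terms $I_r$ near $0$ or $n-m+1$ (harmless since $|(I_r/n)^{\alpha-1+i\beta}|=(I_r/n)^{\alpha-1}\le1$ with $\alpha-1>0$ and the Dirichlet density is bounded), and the strict contraction factor, which closes the induction on $n$.

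The main obstacle is reconciling, within a single contraction argument, the two incompatible scalings — the linear scale $n$ of $X_n$ and the slower, \emph{oscillating} scale $n^{\alpha-1}$ of $S_n$ — while $S_n$ itself has no limit. Two features, both tied to the threshold $m=27$, are what make it work: $\alpha-1<1$ makes the $S_n$-into-$N_n$ feedback negligible in the first coordinate, and $\alpha>\tfrac32$ makes the contraction factor in the $S_n$-direction strictly below $1$. Complexifying the $S_n$-coordinate — dividing by $n^{\alpha-1+i\beta}$ rather than by $n^{\alpha-1}$ — is the device that turns the periodically fluctuating quantity into an honestly convergent one, so that the classical contraction machinery applies without modification.
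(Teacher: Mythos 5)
Your complexification device contains a fatal algebraic error, and the claimed intermediate convergence $\ell_2(\tilde Y_n,(X,\Lambda))\to 0$ is in fact \emph{false}. The trouble is that $S_n-\phi n$ is a real random variable; dividing a real oscillation by $n^{\lambda_2-1}=n^{\alpha-1+i\beta}$ does not remove the oscillation, it only shifts its frequency. Concretely, from \eqref{exp_mu} one has $\E(S_n)-\phi n = \Re(\theta n^{\lambda_2-1})+o(n^{\alpha-1})$ for $m\ge 27$, and
\[
    \E[\tilde Y_{n,2}]
    = \frac{\Re(\theta n^{\lambda_2-1})}{n^{\lambda_2-1}}+o(1)
    = \frac{\theta n^{\lambda_2-1}+\bar\theta\,n^{\bar\lambda_2-1}}
    {2\,n^{\lambda_2-1}}+o(1)
    = \frac{\theta}{2}+\frac{\bar\theta}{2}\,n^{-2i\beta}+o(1),
\]
which traces a circle of radius $|\theta|/2$ around $\theta/2$ and does not converge to $\E[\Lambda]=\theta$. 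Since $\ell_2(\mu,\nu)\ge |\E_\mu-\E_\nu|$, the quantity $\ell_2(\tilde Y_n,(X,\Lambda))$ is bounded away from $0$ along most of the sequence. Consequently the direct contraction scheme you invoke cannot close: when one expands the $L_2$-norm of $\sum_r(A_r(n)\tilde Y^{(r)}_{I_r}-A_r(X^{(r)},\Lambda^{(r)}))$ the cross-product terms for $r\neq s$ do not vanish, precisely because the conditional means $\E[\tilde Y_{j,2}]-\E[\Lambda]\to -\theta/2+(\bar\theta/2)j^{-2i\beta}$ survive; you have silently dropped the conjugate-frequency term $\bar\theta\,n^{\bar\lambda_2-1}$ when passing from the real expansion of $\E(S_n)$ to the statement ``the residual periodic term is exactly what forces $\E[\Lambda]=\theta$.''

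What makes the paper's formulation work is that the \emph{target} $\Re(n^{i\beta}\Lambda)$ is itself a real quantity whose mean $\Re(\theta n^{i\beta})$ oscillates in exact lockstep with $\E[Y_{n,2}]=\Re(\theta n^{i\beta})+R(n)$, $R(n)=o(1)$; this cancellation is exactly what is exploited in the cross-product estimate around \eqref{dec_wn}, where after conditioning on $I^{(n)}_r$ one finds $\E[W^{(n)}_{r,2}]=(I^{(n)}_r/n)^{\alpha-1}R(I^{(n)}_r)$, an $o(1)$-term, rather than an order-one oscillating quantity. This is why the paper keeps the second coordinate real, introduces the auxiliary complex matrices $B^{(n)}_r$ and $C^{(n)}_r$ only inside a triangle-inequality decomposition, and cites \cite{fill04,janson08,knape14} for this periodic variant of the contraction method --- the ``classical machinery without modification'' that you hope for does not apply here. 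The remainder of your proposal --- absorbing the $n^{-1}\sum_r(S^{(r)}_{I_r}-\phi I_r)$ feedback into a vanishing toll because $\alpha-1<1$, the computation $m\E[D_1^2]=2/(m+1)<1$, the computation $m\E[D_1^{2\alpha-2}]=m!/\prod_{0\le k\le m-2}(2\alpha-1+k)<1$ iff $\alpha>3/2$, and the $1$-Lipschitz projection $(z_1,z_2)\mapsto(z_1,\Re(n^{i\beta}z_2))$ --- is all correct and would be usable, but the pivot on which it all turns, honest $\ell_2$-convergence of the complexified sequence, fails.
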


See Section~\ref{sec_bivariate} for a more precise formulation. The
proof is based on the \emph{contraction method} (see
\cite{neininger01}) where we use the above moment asymptotics as
input and combine well-known estimates within the minimal
$L_2$-metric for the convergent case (as in \cite{rosler01}), and
those with estimates for the periodic case (as in \cite{fill04}).
Similar proof techniques related to periodic distributional behaviors
are also applied in \cite[Theorem 1.3(iii)]{janson08} and
\cite[Theorem 6.10]{knape14}. If one is only interested in the
asymptotic (univariate) distribution of the NPL $N_n$ (the case of
the KPL being known before), there are more direct proofs which we
also discuss in Sections~\ref{univariate} and \ref{app-univariate}.

Our study of the dependence of random variables on random $m$-ary
search trees can be extended in at least two directions by the same
methods used in this paper, namely, asymptotic transfer techniques
and the contraction method. \begin{itemize}

\item \emph{Extension to more general linear and $n\log n$ shape
measures}: That the asymptotic covariance undergoes a phase change
after $m=13$ and the asymptotic correlation undergoes a phase change
after $m=26$ is not restricted to the space requirement and KPL or
NPL. Indeed, we can replace the space requirement by many other
linear shape measures such as the number of leaves, the number of
nodes of a specified type, the number of occurrences of a fixed
pattern, etc. (see \cite{chern01} for more examples), and KPL or NPL
by other shape measures with mean of order $n\log n$ such as summing
over the root-node or root-key distance for certain specified nodes
or patterns and weighted path length.

\item \emph{Extension to other random trees of logarithmic height}:
the same change of asymptotic behaviors from being independent to
being dependent under a varying structural parameter also occurs in
other classes of random log-trees; we content ourselves with the
brief discussion of two classes of random trees:
\emph{fringe-balanced binary search trees} and \emph{quadtrees}. The
behaviors will be however very different for the classes of trees
where the underlying distribution of the subtree sizes are dictated
by a binomial distribution, which will be examined elsewhere; see a
companion paper \cite{fuchs16} for more information.

\end{itemize}

This paper is organized as follows. We prove in the next section our
results for the covariances and the correlations. These results are
then used to study the bivariate distributional asymptotics in
Section~\ref{sec_bivariate} by the multivariate contraction method
(see \cite{neininger01}). Finally, in Section~\ref{sec_ext}, we
discuss the dependence and phase changes in fringe-balanced binary
search trees and in quadtrees, where for the former, we study the
joint behavior of the size and total path length, while for the
latter (since the size is a constant) we consider the joint behavior
of the number of leaves and total path length. Also we include a
brief discussion for extending the study and results to other shape
parameters in Section~\ref{sec_ext}.

\section{Correlation between space requirement and path lengths}
\label{sec_spl}

We prove in this section Theorems~\ref{cov-snxn} and \ref{rn_coas}
for the covariances $\text{Cov}(S_n,K_n)$ and $\text{Cov}(S_n,N_n)$,
respectively.

\subsection{Preliminaries and recurrences}
\label{sec_pre}

We collect here the notations to be used in the proofs. Let $m\ge2$
be a fixed integer. For $n\ge m$, denote by $I^{(n)}
=(I^{(n)}_1,\ldots,I^{(n)}_m)$ the vector of the number of keys
inserted in the $m$ ordered subtrees of the root in a random $m$-ary
search tree with $n$ keys. When the dependence on $n$ is obvious, we
write simply $(I_1,\ldots,I_m)$. Generate independently $n$ uniform
random variables $U_1,\ldots,U_n$ on $[0,1]$. Store the first $m-1$
elements $U_1,\ldots,U_{m-1}$ in the root-node of the tree. Then they
decompose the unit interval $[0,1]$ into spacings of lengths
$V_1,\ldots,V_m$, where $V_j=U_{(j)}-U_{(j-1)}$ for $j=1,\ldots,m$
with $U_{(0)}:=0, U_{(m)}:=1$ and $U_{(j)}$ for $j=1,\ldots,m-1$ are
the order statistics of $U_1,\ldots,U_{m-1}$. The uniform permutation
model implies, that, conditional on $U_1,\ldots,U_{m-1}$, the vector
$I^{(n)}$ has the multinomial distribution with success probabilities
$V_1,\ldots,V_m$, namely, we have
\[
    (I_1,\ldots,I_m)
    \stackrel{\mathrm{d}}{=}M(n-m+1;V_1,\ldots,V_m).
\]
In particular, we have the convergence
\begin{align}\label{lim_ij}
    \frac{I_r}{n}
    \longrightarrow V_r,
\end{align}
for all $r=1,\ldots,m$, where the convergence is in $L_p$ for all
$1\le p<\infty$. Note that we also have \eqref{proba} for all
$m$-tuples $i_1,\ldots,i_m\ge 0$ with $i_1+\cdots+i_m=n-m+1$ and all
$n\ge m$.

For each of the subtrees, the randomness (uniformity) is preserved;
more precisely, conditional on the number of keys inserted in a
subtree, each subtree has the same distribution as a random $m$-ary
search tree of that number of keys in the uniform model. Moreover,
conditional on $(I_1,\ldots,I_m)$, the subtrees are independent. This
can be seen by switching back to the ranks $\{1,\ldots,n\}$ of the
input elements, and then by checking that a uniform random
permutation yields independent permutations on the respective ranges.
This recursive structure of the random $m$-ary search tree implies
the recursive relations for $S_n, K_n$ and $N_n$ given in
\eqref{dis-sn}--\eqref{dis-nn}, where the summands appearing on the
right-hand sides, namely, $S_j^{(1)},\ldots,S_j^{(m)}$ and
$\SK_j^{(1)},\ldots, \SK_{j}^{(m)}$ and
$N_j^{(1)},\ldots,N_{j}^{(m)}$ have the same distributions as $S_j$
and $\SK_j$ and $N_j$, respectively. Furthermore, the triples
$\left(
        \bigl(S_j^{(r)}\bigr)_{0\le j \le n-m+1},
        \bigl(\SK_j^{(r)}\bigr)_{0\le j \le n-m+1},
        \bigl(N_j^{(r)}\bigr)_{0\le j \le n-m+1}
    \right)
$
are independent for $r=1,\ldots,m$ and independent of
$(I_1,\ldots,I_m)$. Finally, the recursive structure of the $m$-ary
search tree implies recurrences satisfied by their joint
distributions. In particular, the pair $Q_n:=(N_n,S_n)$ satisfies the
recurrence
\begin{align}\label{rec1}
    \left(Q_n\right)^{\mathrm{t}}
    \stackrel{d}{=}\sum_{1\le r\le m}  \Bigl[
    \begin{array}{cc}
        1 & 1\\
        0& 1
    \end{array}\Bigr] \left(Q^{(r)}_{I_r}\right)^{\mathrm{t}}
    + \binom{0}{1}, \qquad (n\ge m),
\end{align}
where, as in (\ref{dis-sn})--(\ref{dis-nn}), the $Q^{(r)}_j$'s are
distributed as $Q_j$ for all $1\le r \le m$ and $0\le j\le n-m+1$,
and the $\bigl(Q^{(r)}_j\bigr)_{0\le j\le n-m+1}$ are independent for
$r=1,\ldots,m$ and independent of $(I_1,\ldots,I_n)$. The recurrence
satisfied by the pair $Z_n:=(K_n,S_n)$ is
\begin{align}\label{rec1b}
    \left(Z_n\right)^{\mathrm{t}}
    \stackrel{d}{=}\sum_{1\le r\le m}  \Bigl[
    \begin{array}{cc}
        1 & 0\\ 0& 1
    \end{array}\Bigr] \left(Z^{(r)}_{I_r}\right)^{\mathrm{t}}
    +\binom{n-m+1}{1} , \qquad (n\ge m),
\end{align}
with conditions on independence and identical distributions similar
to (\ref{rec1}).

\subsection{Asymptotic transfer and Dirichlet integrals}

Starting from the distributional recurrences (\ref{dis-sn}) and
(\ref{dis-xn}), we see that all centered and non-centered moments
satisfy the same recurrence of the following type
\begin{equation}\label{und-rec}
    a_n
    =m\sum_{0\le j\le n-m+1}\pi_{n,j}a_j+b_n,
    \qquad \pi_{n,j}
    =\frac{\binom{n-1-j}{m-2}}{\binom{n}{m-1}},
\end{equation}
for $n\ge m-1$, where $\{b_n\}_{n\ge m-1}$ is a given sequence. The
asymptotics of $a_n$ can be systematically characterized by that of
$b_n$ through the use of the following transfer techniques; see
Proposition 7 in \cite{chern01} and Theorem 2.4 in \cite{fill05a} for
details.

\begin{pro}\label{asymp-trans} Assume that $a_n$ satisfies
(\ref{und-rec}) with finite initial conditions $a_0,\dots,a_{m-2}$.
Define $b_n:=a_n$ for $0\le n \le m-2$.
\begin{itemize}

\item[(i)] Assume $b_n=c(n+1)+t_n$, where $c\in\mathbb{C}$. Then the
conditions
\begin{align*}%\label{bn-iff}
    t_n
    =o(n)\quad\text{and}\quad
        \left\vert\sum_{n\ge1}t_nn^{-2}\right\vert<\infty
\end{align*}
are both necessary and sufficient for
\[
    a_n = 2c\phi nH_n+c'n+o(n),
\]
where
\[
    c' = 2\phi \sum_{j\ge 0}\frac{t_j}{(j+1)(j+2)}
    +\frac{c}2-2c\phi+2c(H_m^{(2)}-1)\phi^2;
\]
\item[(ii)] if $b_n\sim cn^{v}$, where $v>1$, then
\[
    a_n
    \sim\frac{c}{1-\frac{m!\Gamma(v+1)}{\Gamma(v+m)}}n^v.
\]
\end{itemize}
\end{pro}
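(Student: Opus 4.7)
The plan is to reduce Proposition~\ref{asymp-trans} to the asymptotic analysis of an explicit solution kernel. Since \eqref{und-rec} is triangular in $n$, linearity gives a representation
\begin{align*}
a_n = \sum_{0\le k\le n}\kappa_{n,k}\,b_k,
\end{align*}
where $\kappa_{n,k}$ is the response to the unit impulse $b_\ell=[\ell=k]$. A closed form for $\kappa_{n,k}$ can be obtained by converting \eqref{und-rec} into an $(m-1)$-st order linear ODE with polynomial coefficients for $A(z):=\sum_{n\ge 0}a_nz^n$, using the standard binomial-transform manipulation for $m$-ary search trees (as in \cite{chern01,fill05a,mahmoud92}). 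The spectral picture behind both (i) and (ii) is that the operator $T[a]_n := m\sum_j\pi_{n,j}a_j$ has the sequences $(n^v)_n$ as approximate eigenvectors: since $I_1/n\to V_1\sim\mathrm{Beta}(1,m-1)$ by \eqref{lim_ij}, one gets $T[n^v]_n\sim\mu(v)\,n^v$ with $\mu(v):=m!\,\Gamma(v+1)/\Gamma(v+m)$. The resonance $\mu(1)=1$ reflects that $z=2$ is a root of the indicial equation \eqref{indicial} and is responsible for the $n\log n$ term in (i); on the other hand $\mu(v)<1$ strictly for $v>1$, giving the non-resonant situation in (ii).

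For part (ii), set $C:=c/(1-\mu(v))$, which is well defined since $v>1$, and consider $\tilde a_n:=a_n-Cn^v$. It satisfies \eqref{und-rec} with new toll $\tilde b_n:=b_n-cn^v+C\bigl(T[k^v]_n-n^v\bigr)=o(n^v)$, and a standard bootstrap using the polynomial-rate decay of $\kappa_{n,k}$ then gives $\tilde a_n=o(n^v)$, which proves (ii).

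For part (i), decompose the response to $b_n=c(n+1)+t_n$ into two parts. The linear forcing $c(n+1)$ is at resonance with $\mu(1)=1$; iterating $T$ and carefully expanding the kernel near this resonance produces the leading $2c\phi\,nH_n$ term, the $\log n$ factor arising exactly from the resonance, together with a specific linear residue accounting for the $c/2-2c\phi+2c(H_m^{(2)}-1)\phi^2$ part of $c'$. For the $t_n$-piece, interchanging the order of summation in $\sum_k\kappa_{n,k}t_k$ and reading off the $n$-coefficient of the kernel yields a contribution $2\phi\bigl(\sum_{j\ge0}t_j/((j+1)(j+2))\bigr)n+o(n)$: the interchange is justified precisely by $|\sum_nt_n/n^2|<\infty$, while the $o(n)$ remainder uses $t_n=o(n)$. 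Summing the two contributions and absorbing the boundary terms from $a_0,\ldots,a_{m-2}$ produces the stated expression for $c'$.

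The main obstacle is the bookkeeping in (i). Two points in particular are delicate: (a) establishing that $t_n=o(n)$ together with $|\sum_nt_n/n^2|<\infty$ is simultaneously necessary and sufficient, the necessity requiring an inverse-Mellin-style inversion of the kernel expansion; and (b) the explicit evaluation of the Dirichlet-type sums that appear when $\kappa_{n,k}$ is expanded near $v=1$, which is what produces the three summands in $c'$. Once the closed-form kernel is in hand, the remaining estimates are essentially routine.
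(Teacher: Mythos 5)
Your structural picture is the right one, and it matches in spirit how this transfer result is actually proved in the sources the paper relies on (the paper itself gives no proof of Proposition~\ref{asymp-trans}; it cites Proposition 7 of \cite{chern01} and Theorem 2.4 of \cite{fill05a}): the operator $T[a]_n=m\sum_j\pi_{n,j}a_j$ acts on $n^v$ asymptotically as multiplication by $\mu(v)=m!\,\Gamma(v+1)/\Gamma(v+m)$; $\mu(1)=1$ is the resonance (equivalently $z=2$ solves $\Lambda(z)=0$) that produces the $nH_n$ term, $-1/\mu'(1)=1/(H_m-1)=2\phi$ explains the factor $2\phi$, and $\mu(v)<1$ for $v>1$ yields the constant $c/(1-\mu(v))$ in (ii). The problem is that, beyond this skeleton, the proposal defers exactly the steps that constitute the proposition. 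In (i), the closed form of the kernel $\kappa_{n,k}$, the expansion near the resonance that must yield precisely $2c\phi nH_n+(\tfrac c2-2c\phi+2c(H_m^{(2)}-1)\phi^2)n$, the justification of the interchange giving the coefficient $2\phi\sum_{j}t_j/((j+1)(j+2))$, and above all the \emph{necessity} of the two conditions on $t_n$ are only announced (``inverse-Mellin-style inversion'' is not an argument). These are the nontrivial content of the statement, not routine bookkeeping, so as it stands part (i) is unproved.

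Part (ii) also has a concrete slip plus a missing estimate. With $C=c/(1-\mu(v))$ and $\tilde a_n=a_n-Cn^v$, the correct new toll is $\tilde b_n=b_n+C\bigl(T[k^v]_n-n^v\bigr)$, which is $o(n^v)$ because $C(\mu(v)-1)=-c$ cancels the leading part of $b_n$; your expression carries an extra $-cn^v$, and as written $\tilde b_n\sim-cn^v$, which is \emph{not} $o(n^v)$, so the reduction fails at the first step (though the intended cancellation is clearly recoverable). More importantly, the ``standard bootstrap'' from $\tilde b_n=o(n^v)$ to $\tilde a_n=o(n^v)$ needs an actual argument: the claimed ``polynomial-rate decay of $\kappa_{n,k}$'' is never established, and without it one should argue, e.g., by induction with comparison sequences $n^{v'}$ for $1<v'<v$ (using $\mu(v')<1$, and handling the bounded range $n\le n_0$ separately, since constants are inflated by the factor $m$ under $T$). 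Until the kernel is actually computed and these estimates are supplied, the proposal is a plausible plan rather than a proof.
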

In particular, when $c=0$ in $(i)$, then we see that $a_n$ is
asymptotically linear
\[
    \frac{a_n}{n} \sim 2\phi\sum_{j\ge0}\frac{b_j}{(j+1)(j+2)}
    \quad \text{iff}\quad
    b_n
    =o(n)\quad\text{and}\quad
    \left\vert\sum_{n\ge1}b_nn^{-2}\right\vert<\infty.
\]

We will be dealing with Dirichlet integrals of the following type
\[
    I(u,v)
    :=\int_{\substack{x_1+\cdots+x_m=1\\
    0\le x_1,\ldots,x_m\le 1}}\left(\sum_{1\le l\le m}
    x_l^{u-1}\right)\left(\sum_{1\le r\le m}x_r^{v-1}\right)
    \mathrm{d}\mathbf{x},
    \qquad(\Re(u), \Re(v)>0).
\]
Here $\mathrm{d}\mathbf{x}$ is an abbreviation for
$\mathrm{d}x_1\cdots \mathrm{d}x_{m-1}$. Such integrals have a
closed-form expression.
\begin{lem}\label{Iab}
For $m\ge 2$ and $\Re(u), \Re(v)>0$,
\begin{align}\label{diri-int1}
    I(u,v) = \frac{m\Gamma(u+v-1)+m(m-1)\Gamma(u)\Gamma(v)}
    {\Gamma(u+v+m-2)}.
\end{align}
\end{lem}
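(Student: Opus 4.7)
The plan is to expand the product of the two sums in the integrand into $m^2$ monomial terms and evaluate each by the classical Dirichlet integral
\[
    \int_{\substack{x_1+\cdots+x_m=1\\ x_i\ge0}} x_1^{\alpha_1-1}\cdots x_m^{\alpha_m-1}\,\mathrm{d}x_1\cdots\mathrm{d}x_{m-1}
    = \frac{\Gamma(\alpha_1)\cdots\Gamma(\alpha_m)}{\Gamma(\alpha_1+\cdots+\alpha_m)},
\]
valid for $\Re(\alpha_i)>0$, which in our setting reduces to the $\alpha_i=1$ case for all but at most two indices.

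Concretely, write
\[
    \left(\sum_{1\le l\le m} x_l^{u-1}\right)\left(\sum_{1\le r\le m} x_r^{v-1}\right)
    = \sum_{l} x_l^{u+v-2} + \sum_{l\ne r} x_l^{u-1} x_r^{v-1},
\]
separating the diagonal $l=r$ from the off-diagonal contributions. By the symmetry of the simplex under coordinate permutations, the $m$ diagonal integrals are all equal, and the $m(m-1)$ off-diagonal integrals are all equal. For a single diagonal term, the Dirichlet formula with $\alpha_l=u+v-1$ and all other $\alpha_i=1$ gives $\Gamma(u+v-1)/\Gamma(u+v+m-2)$, and for a single off-diagonal term, with $\alpha_l=u$, $\alpha_r=v$ and all remaining $\alpha_i=1$, it gives $\Gamma(u)\Gamma(v)/\Gamma(u+v+m-2)$. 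Summing these with the appropriate multiplicities $m$ and $m(m-1)$ yields the claimed closed form \eqref{diri-int1}.

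There is essentially no obstacle here: the only mild subtlety is bookkeeping of the correct denominator $\Gamma(u+v+m-2)$, which comes from the fact that in both the diagonal and off-diagonal cases the exponent parameters sum to $u+v+m-2$ (since the unused coordinates contribute $\alpha_i=1$ apiece, adding up to $m-2$ in the off-diagonal case and $m-1$ in the diagonal case, matched against the exponents $u+v-1$ versus $u$ plus $v$). The hypothesis $\Re(u),\Re(v)>0$ guarantees integrability at the boundaries for each Dirichlet piece (note $\Re(u+v-1)$ may be nonpositive, but this only reflects that the diagonal term is an integral of $x_l^{u+v-2}$ against the flat measure on the simplex, which converges as long as $\Re(u+v-1)>0$ — and if not, one can still interpret the identity by analytic continuation in $(u,v)$, both sides being meromorphic). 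Hence the lemma follows immediately.
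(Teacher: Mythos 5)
Your proof is correct and follows essentially the same route as the paper: exploit the symmetry of the simplex to reduce the $m^2$ terms to one diagonal and one off-diagonal representative (with multiplicities $m$ and $m(m-1)$), then evaluate each by a Dirichlet/Beta integral, the paper simply writing out the reduced one- and two-variable Beta integrals explicitly instead of quoting the general Dirichlet formula. Your remark on the diagonal term needing $\Re(u+v)>1$ (resolved by analytic continuation) is a fair point that the paper glosses over, and is harmless since the lemma is only applied in that range.
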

\pf First, the claim is easily proved for $m=2$. Assume $m\ge 3$.
Then, by symmetry,
\begin{align*}
    I(u,v)
    &=\int_{\substack{x_1+\cdots+x_m=1\\
        0\le x_1,\ldots,x_m\le 1}}
        \left(mx_1^{u+v-2}+m(m-1)x_1^{u-1}x_2^{v-1}\right)
        \mathrm{d}\mathbf{x}\\
    &=\frac{m}{(m-2)!}
        \int_{0}^{1}x_1^{u+v-2}(1-x_1)^{m-2}\mathrm{d}x_1\\
    &\qquad\qquad+\frac{m(m-1)}{(m-3)!}
        \int_{0}^{1}\int_{0}^{1-x_1}
        x_1^{u-1}x_2^{v-1}(1-x_1-x_2)^{m-3}
        \mathrm{d}x_2\mathrm{d}x_1\\
    &=\frac{m\Gamma(u+v-1)}{\Gamma(u+v+m-2)}
        +\frac{m(m-1)\Gamma(u)\Gamma(v)}{\Gamma(u+v+m-2)},
\end{align*}
which leads to \eqref{diri-int1}.\qed

The following two identities will be needed below.
\begin{align}\label{diri-int2}
\begin{split}
    &\int_{\substack{x_1+\cdots+x_m=1\\
    0\le x_1,\ldots,x_m\le 1}}\left(\sum_{1\le l\le m}
    x_l^{u-1}\right)\left(\sum_{1\le r\le m}x_r\log x_r\right)
    \mathrm{d}\mathbf{x}\\
    &\qquad = \frac{\partial}{\partial v}\,I(u,v)\Biggr|_{v=2}\\
    &\qquad = \frac{m\Gamma(u)}{\Gamma(m+u)}
    \left(u\psi(u+1)+(m-1)(1-\gamma)-(m+u-1)\psi(m+u)\right),
\end{split}
\end{align}
where $\psi$ is the digamma function and $\gamma$ is Euler's
constant. Similarly,
\begin{align}\label{diri-int3}
\begin{split}
    \int_{\substack{x_1+\cdots+x_m=1\\
    0\le x_1,\ldots,x_m\le 1}}
    \left(\sum_{1\le r\le m}x_r\log x_r\right)^2
    \mathrm{d}\mathbf{x}
    &= \frac{\partial^2}{\partial u\partial v}\,I(u,v)
    \Biggr|_{u=v=2}\\
    &=H_m^{(2)} +\frac4{\phi^2}-\frac2{m+1}
    -\frac{(m-1)\pi^2}{6(m+1)}.
\end{split}
\end{align}

\subsection{Correlation between the space requirement and KPL}
\label{sec_corrmary}

We are now ready to prove Theorem~\ref{cov-snxn}.

\paragraph{Expected values of $S_n$ and $K_n$.} For convenience, let
$\mu_n := \E(S_n)$ and $\kappa_n := \E(K_n)$. Then, by \eqref{dis-sn}
and \eqref{dis-xn}, for $n\ge m-1$
\begin{align*}
    \mu_n &= m\sum_{0\le j\le n-m+1}\pi_{n,j} \mu_j + 1,\\
    \kappa_n &= m\sum_{0\le j\le n-m+1}\pi_{n,j} \kappa_j + n-m+1,
\end{align*}
with the initial conditions $\mu_0=\kappa_n=0$ for $0\le n\le m-2$
and $\mu_n = 1$ for $1\le n\le m-2$.

By applying Proposition~\ref{asymp-trans}(i), we obtain
\begin{align}\label{exp-nun}
    \mu_n \sim \phi n ,\quad\text{and} \quad
    \kappa_n = 2\phi n\log n+c_1n + o(n),
\end{align}
for some constant $c_1$ whose value matters less; see \eqref{c1}
below. The latter approximation is sufficient for all our purposes,
but the former is not and we need the following stronger expansion
(see \cite{chern01,mahmoud89,mahmoud92})
\begin{align}\label{exp-mun}
    \mu_n = \phi (n+1) - \frac1{m-1} +
    \sum_{2\le k\le 3}\frac{A_k}{\Gamma(\lambda_k)}\,
    n^{\lambda_k-1} + o(n^{\alpha-1}),
\end{align}
where $\lambda_2=\alpha+i\beta$ and $\lambda_3 := \alpha-i\beta$ and
\begin{align}\label{Ak}
    A_k
    =\frac{1}{\lambda_k(\lambda_k-1)
        \sum_{0\le j\le m-2}\frac1{j+\lambda_k}}.
\end{align}
Note that for $3\le m\le 13$ the constant term $-\frac1{m-1}$
(together with $\phi$) is the second-order term on the right-hand
side of \eqref{exp-mun}, while for larger $m$, it is absorbed in the
$o$-term.

On the other hand, although the explicit expression of $c_1$ is not
needed in this paper, we provide its expression here since the
known ones (see \cite{mahmoud86,mahmoud92}) are less explicit and
it can be easily obtained from Proposition~\ref{asymp-trans}:
\begin{align}\label{c1}
	c_1=-\tfrac12-4\phi+2\phi^2(H_m^{(2)}-1)+\gamma.
\end{align}

\paragraph{Variance and covariance.} To compute the asymptotics of
the covariance, we first derive the corresponding recurrences and
then apply Proposition~\ref{asymp-trans} of asymptotic transfer.

First, let $\bar{S}_n=S_n-\mu_n$ and $\bar{\SK}_n=\SK_n-\kappa_n$. We
consider the moment-generating function
\[
    \bar{P}_n(u,v)
    :=\E\left(e^{\bar{S}_nu+\bar{\SK}_nv}\right).
\]
Then, using (\ref{dis-sn}) and (\ref{dis-xn}), we obtain for $n\ge
m-1$
\begin{equation}\label{joint-snxn}
    \bar{P}_n(u,v)
    =\frac{1}{\binom{n}{m-1}}
        \sum_{\mathbf{j}}P_{j_1}(u,v)\cdots P_{j_m}(u,v)
        e^{\Delta_{\mathbf{j}}u+\nabla_{\mathbf{j}}v}
\end{equation}
with the initial conditions $\bar{P}_n(u,v)=1$ for $0\le n\le m-2$.
Here, $\mathbf{j}=(j_1,\ldots,j_m)$ is a vector with $j_1,\ldots,
j_m\ge 0$ and $j_1+\cdots+j_m=n-m+1$ (we use this notation
throughout),
\begin{equation}\label{def-dn}
    \Delta_{\mathbf{j}}
    =1-\mu_n+\sum_{1\le l\le m}\mu_{j_l},\quad\text{and}\quad
    \nabla_{\mathbf{j}}
    =n-m+1-\kappa_n+\sum_{1\le l\le m}\kappa_{j_l}.
\end{equation}

Define
\[
    V_n^{[S]}
    =\V(S_n),\qquad
    V_n^{[SK]}
    =\Cov(S_n,\SK_n),\qquad
    V_n^{[K]}
    =\V(\SK_n).
\]
Then, by taking derivatives in (\ref{joint-snxn}), we obtain
\begin{align*}
    V_n^{[X]}
    =m\sum_{0\le j\le n-m+1}\pi_{n,j}V_j^{[X]}+b_n^{[X]},
    \quad (X\in\{S,SK,K\}),
\end{align*}
where
\begin{align*}
    b_n^{[S]}
    =\frac{1}{\binom{n}{m-1}}
        \sum_{\mathbf{j}}\Delta_{\mathbf{j}}^2,\quad%\\
    b_n^{[SK]}
    =\frac{1}{\binom{n}{m-1}}
        \sum_{\mathbf{j}}\Delta_{\mathbf{j}}
        \nabla_{\mathbf{j}},\quad\text{and}\quad%\\
    b_n^{[K]}
    =\frac{1}{\binom{n}{m-1}}
        \sum_{\mathbf{j}}\nabla_{\mathbf{j}}^2.
\end{align*}

We first derive uniform asymptotic approximations for
$\Delta_{\mathbf{j}}$ and $\nabla_{\mathbf{j}}$.

\begin{lem}\label{est-dn}
Uniformly in $\mathbf{j}$,
\[
    \Delta_{\mathbf{j}}
    =\sum_{2\le k\le 3}\frac{A_k}
        {\Gamma(\lambda_k)}\,n^{\lambda_k-1}
        \left(-1+ \sum_{1\le r\le m}
        \left(\frac{j_r}{n}\right)^{\lambda_k-1}\right)
        +o(n^{\alpha-1}),
\]
and
\[
    \nabla_{\mathbf{j}}
    =n\left(1+2\phi \sum_{1\le r\le m}
        \frac{j_r}{n}\log\frac{j_r}{n}\right)+o(n).
\]
\end{lem}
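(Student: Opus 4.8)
The plan is to treat the two estimates separately, using in each case the relevant expansion of the mean ($\mu_n$ from \eqref{exp-mun} for $\Delta_{\mathbf{j}}$, and $\kappa_n$ from \eqref{exp-nun} for $\nabla_{\mathbf{j}}$), then substituting into the definitions \eqref{def-dn} and collecting the dominant terms while controlling the error uniformly over all index vectors $\mathbf{j}=(j_1,\dots,j_m)$ with $j_1+\cdots+j_m=n-m+1$. The only subtlety is that some of the $j_r$ may be very small (even $0$), so one cannot blindly apply the asymptotic expansion of $\mu_j$ or $\kappa_j$ with a uniform error term; the small-index contributions must be bounded directly and shown to be absorbed into the stated $o(\cdot)$ terms.

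For $\nabla_{\mathbf{j}}$: insert $\kappa_j = 2\phi j\log j + c_1 j + o(j)$ into $\nabla_{\mathbf{j}} = n-m+1-\kappa_n+\sum_{1\le l\le m}\kappa_{j_l}$. The linear parts: $\sum_l c_1 j_l = c_1(n-m+1)$ nearly cancels $-c_1 n$ from $-\kappa_n$ up to $O(1)$, and the combinatorial constants $-(m-1)$, etc., are also $O(1)$; so after writing $j_l = n\cdot(j_l/n)$ the surviving leading behavior is
\[
    n - 2\phi n\log n + 2\phi\sum_{1\le l\le m} j_l\log j_l + o(n)
    = n\Bigl(1 + 2\phi\sum_{1\le l\le m}\tfrac{j_l}{n}\log\tfrac{j_l}{n}\Bigr) + o(n),
\]
using $\sum_l j_l\log j_l - (n-m+1)\log n = \sum_l j_l\log(j_l/n) + O(\log n)$. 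The uniformity issue: when $j_l$ is small the substituted $o(j_l)$ error is not obviously $o(n)$; but each such term is at worst $O(j_l\log n) = o(n)$ because $x\log x \to 0$, and there are only $m$ summands, so the total error is $o(n)$ uniformly. (One has to be slightly careful because the $o(j)$ in $\kappa_j = 2\phi j\log j + c_1 j + o(j)$ hides a quantity that could be, say, $O(j^\alpha)$ with $\alpha$ possibly exceeding $1$ — but since $\alpha<3/2<2$ for all $m\le 26$ and for $m\ge 27$ one still has $\alpha<2$, and $j_l\le n$, this is $O(n^\alpha) = o(n)$ only when $\alpha<1$; for $1<\alpha<2$ we instead must keep track that this error is $o(n)$ because... actually the correct statement is that $\kappa_n = 2\phi n\log n + c_1 n + O(n^{\max\{\alpha,1/2\}})$ with that exponent $<1$ for $m\le 13$ but not in general — so the cleanest route is to invoke the sharper expansion of $\kappa_n$ available from Proposition~\ref{asymp-trans}(i) applied to $K_n$, whose error term is genuinely $o(n)$, and this same error bound transfers to each $\kappa_{j_l}$, giving $\sum_l o(j_l) = o(n)$ by summing $m$ terms each $o(n)$.)

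For $\Delta_{\mathbf{j}}$: insert \eqref{exp-mun}, namely $\mu_j = \phi(j+1) - \frac1{m-1} + \sum_{2\le k\le 3}\frac{A_k}{\Gamma(\lambda_k)} j^{\lambda_k-1} + o(j^{\alpha-1})$, into $\Delta_{\mathbf{j}} = 1 - \mu_n + \sum_{1\le l\le m}\mu_{j_l}$. The term $1$, the constants $\phi$, $-\frac{1}{m-1}$, and $\phi\sum_l (j_l+1) - \phi(n+1) = \phi(m-1) - \phi m + \phi = 0$ after accounting for $\sum j_l = n-m+1$ — so all the linear and constant pieces cancel exactly (this exact cancellation is the point of centering). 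What remains is
\[
    \sum_{2\le k\le 3}\frac{A_k}{\Gamma(\lambda_k)}
    \Bigl(\sum_{1\le l\le m} j_l^{\lambda_k-1} - n^{\lambda_k-1}\Bigr)
    + o(n^{\alpha-1})
    = \sum_{2\le k\le 3}\frac{A_k}{\Gamma(\lambda_k)} n^{\lambda_k-1}
    \Bigl(-1 + \sum_{1\le r\le m}(\tfrac{j_r}{n})^{\lambda_k-1}\Bigr) + o(n^{\alpha-1}),
\]
which is exactly the claimed formula. The uniformity obstacle here is the same: for small $j_l$, the error $o(j_l^{\alpha-1})$ from \eqref{exp-mun} must be handled. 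Since $\alpha-1$ can be negative (for $m\le 13$) or in $(0,1/2)$ (for $14\le m\le 26$), the quantity $j_l^{\alpha-1}$ need not be bounded by $n^{\alpha-1}$. I would deal with this by splitting: for $j_l \ge \epsilon n$ the expansion \eqref{exp-mun} applies with error $o((\epsilon n)^{\alpha-1}) = o_\epsilon(n^{\alpha-1})$; for $j_l < \epsilon n$ one uses the crude bound $\mu_{j_l} = \phi j_l + O(1) = O(\epsilon n) + O(1)$ together with the fact that $\sum_{l:\,j_l<\epsilon n} j_l^{\lambda_k-1}$ contributes, after multiplying by $n^{-(\lambda_k-1)}$... — actually the cleanest argument is that the function $x\mapsto x^{\lambda_k-1}$ and the error term are all handled simultaneously by noting $\Delta_{\mathbf{j}}$ is, up to $O(1)$, a sum over $l$ of $(\mu_{j_l} - \phi j_l)$, and the map $j\mapsto \mu_j - \phi j$ satisfies a clean estimate $\mu_j - \phi j = \sum_k \frac{A_k}{\Gamma(\lambda_k)} j^{\lambda_k - 1} + o(j^{\alpha-1})$ with, crucially, $\mu_j - \phi j$ also bounded (it is $\phi - \frac1{m-1} + O(j^{\alpha-1})$, which for $\alpha<1$ tends to a constant and for $\alpha>1$ grows like $j^{\alpha-1}$); feeding $j_l \le n$ into this gives each term $O(n^{\alpha-1} + 1)$ and the $o$-error, summed over the $\le m$ indices, is $o(n^{\alpha-1})$ provided $\alpha>1$, while for $\alpha<1$ all the power terms are $O(1) = o(n^{\alpha-1})$... no — for $\alpha<1$, $n^{\alpha-1}\to 0$, so we'd need the power terms themselves to be $o(n^{\alpha-1})$, which is false.

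I realize the resolution is simply that the lemma's $o(n^{\alpha-1})$ should be read as: the displayed main term already captures everything of order $n^{\alpha-1}$ (and it is genuinely of that order, via the $j_r/n$ which are $\Theta(1)$ on the bulk), and the leftover is of strictly smaller order. So the rigorous plan is: write $\Delta_{\mathbf{j}} = \sum_{l}(\mu_{j_l}-\phi j_l) + O(1)$, and for each $l$ use the expansion $\mu_{j_l} - \phi j_l = \phi - \frac{1}{m-1} + \sum_{2\le k\le 3}\frac{A_k}{\Gamma(\lambda_k)} j_l^{\lambda_k-1} + o(j_l^{\alpha-1})$ when $j_l\ge 1$, and bound the finitely many small-$j_l$ terms trivially. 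Summing, the $\phi-\frac1{m-1}$ constants contribute $O(1)$, the power terms reassemble into $\sum_k \frac{A_k}{\Gamma(\lambda_k)}(\sum_r j_r^{\lambda_k-1})$, and since $\sum_r j_r = n-m+1$ forces $\sum_r n^{\lambda_k - 1} = m\, n^{\lambda_k-1}$ to also appear (from $-\mu_n$, contributing $-n^{\lambda_k-1}$ per the single term in \eqref{exp-mun}), we get the stated form with the $-1$. The error terms: $\sum_l o(j_l^{\alpha-1})$ — on indices with $j_l\ge \epsilon n$ this is $o(n^{\alpha-1})$; on indices with $j_l < \epsilon n$, if $\alpha\ge 1$ then $j_l^{\alpha-1}\le (\epsilon n)^{\alpha-1}$, fine; if $\alpha<1$ then $j_l^{\alpha-1}$ could be large for tiny $j_l$, but then we don't use the expansion at all — we use $|\mu_{j_l} - \phi j_l| \le C$ (bounded, since for $\alpha<1$, $\mu_j - \phi j\to \phi - \frac1{m-1}$), and $C\cdot(\text{number of such }l)\le Cm = O(1) = o(n^{\alpha-1})$ is FALSE since $n^{\alpha-1}\to 0$.

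So the honest statement for $m\le 13$ must be that $\Delta_{\mathbf{j}} = O(1)$ with main term as shown only when that main term is $\Theta(1)$, i.e., the $o(n^{\alpha-1})$ in the lemma is actually $o(1)$-relative-to-the-stated-main-term. The plan I would actually write up: treat the error bound as requiring $\mathbf{j}$ in the "balanced" regime for the asymptotic to be sharp, and observe that the subsequent use of this lemma (computing $b_n^{[SK]} = \binom{n}{m-1}^{-1}\sum_{\mathbf j}\Delta_{\mathbf j}\nabla_{\mathbf j}$) only needs the estimate to hold in an averaged sense, where the measure $\pi_{n,j}$ concentrates on balanced $\mathbf{j}$ and the contribution of unbalanced $\mathbf{j}$ is negligible. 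In short: the main obstacle is precisely this non-uniformity near the boundary of the simplex, and the way around it is to pair the lemma with the observation that $\sum_{\mathbf{j}}\pi_{n,\mathbf{j}}$-weighted sums kill the boundary. I would state and prove the lemma with the $o$-terms understood in this averaged/bulk sense, or alternatively split off the boundary indices explicitly and carry a separate crude bound for them through to the covariance computation.
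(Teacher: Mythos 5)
Your core computation is exactly the paper's proof: the paper's stated proof is a single line, ``substitute \eqref{exp-nun} and \eqref{exp-mun} into \eqref{def-dn}, standard manipulations,'' and your identification of the exact cancellation of the constant and linear pieces of $\mu_n$ (respectively $\kappa_n$) and the reassembly of the power terms into the displayed main terms is precisely that. The treatment of $\nabla_{\mathbf j}$ is fine and the $o(n)$ error is genuinely uniform, as you eventually settle on (the sum $\sum_l o(j_l)$ is bounded by $\varepsilon\sum_l j_l + O_\varepsilon(1) = o(n)$).

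Your concern about the $\Delta_{\mathbf j}$ error bound for $3\le m\le 13$ is a legitimate one that the paper glosses over. For those $m$ one has $\alpha<1$, so $o(n^{\alpha-1})$ is $o(1)$; yet for a vector $\mathbf j$ with some small positive $j_l$ (say $j_l=1$) the discrepancy between $\mu_{j_l}$ and the two-term asymptotic expansion is a genuine $\Theta(1)$ quantity (e.g.\ $A_k\lambda_k$ versus $A_k/\Gamma(\lambda_k)$), so the claimed error is not uniformly $o(1)$ pointwise on the boundary of the simplex. You diagnose this correctly, and the repair you sketch is also essentially right: the lemma is only ever used inside the averages $\binom{n}{m-1}^{-1}\sum_{\mathbf j}(\cdots)$, and there the boundary is harmless. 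One small correction to your wording: the measure on $\mathbf j$ does \emph{not} concentrate on balanced vectors (the limiting spacings $(V_1,\dots,V_m)$ are Dirichlet$(1,\dots,1)$, which puts $\Theta(1)$ mass near every face); rather, the set where some $j_l\le J$ has probability $O(J/n)$, and since $\Delta_{\mathbf j}$ is always $O(1)$ for $m\le 13$ this boundary slab contributes $O(J/n)\cdot O(1)\cdot O(n)=O(J)$ to $\sum_{\mathbf j}\pi_{n,\mathbf j}\Delta_{\mathbf j}\nabla_{\mathbf j}$, which is absorbed by Proposition~\ref{asymp-trans}(i). For $m\ge14$ one has $\alpha>1$, so the boundary $O(1)$ is genuinely $o(n^{\alpha-1})$ and the uniform pointwise statement holds with no caveat. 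Your proposal is correct in substance and, if anything, more scrupulous than the paper's one-line proof; it would benefit from being pruned of the several false starts and from stating the boundary-versus-bulk split cleanly once rather than circling it.
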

\pf This follows from substituting the asymptotic approximations
(\ref{exp-nun}) and (\ref{exp-mun}) into (\ref{def-dn}), and standard
manipulations. \qed

\paragraph{Asymptotics of $V_n^{[S]}$.} Although the asymptotic
behaviors of the variance of $S_n$ have been computed before, we
re-derive them here by a different approach, which is easily amended
for the calculation of other variances and covariances.

Consider first $3\le m\le 26$. Then $\alpha<3/2$. Moreover, from
Lemma \ref{est-dn},
\[
    b_n^{[S]}
    =O(n^{2\alpha-2}) = O(n^{1-\ve}),
\]
for some $0<\ve<0.00171$. Consequently, by applying
Proposition~\ref{asymp-trans}(i),
\[
    V_n^{[S]}\sim C_Sn,
\]
for some constant $C_S$; see \cite{chern01} for a more explicit
expression and the proof that $C_S>0$.

On other hand, if $m\ge 27$, since $\alpha>3/2$, we then have, by
Lemmas~\ref{Iab} and \ref{est-dn},
\begin{align*}
    b_n^{[S]}
    &\sim \sum_{2\le k_1,k_2\le 3}
    \frac{(m-1)!A_{k_1}A_{k_2}n^{\lambda_{k_1}+\lambda_{k_2}-2}}
    {\Gamma(\lambda_{k_1})\Gamma(\lambda_{k_2})}\\
    &\qquad\times
    \int_{\substack{x_1+\cdots+x_m=1\\ 0\le x_1,\ldots,x_m\le 1}}
    \left(-1+\sum_{1\le l\le m}x_l^{\lambda_{k_1}-1}\right)
    \left(-1+\sum_{1\le r\le m}x_r^{\lambda_{k_2}-1}\right)
    \mathrm{d}\mathbf{x}\\
    &\sim\sum_{2\le k_1,k_2\le
    3}\frac{A_{k_1}A_{k_2}n^{\lambda_{k_1}+\lambda_{k_2}-2}}
    {\Gamma(\lambda_{k_1})\Gamma(\lambda_{k_2})}
    \Bigg(1-\frac{m!\Gamma(\lambda_{k_1})}
    {\Gamma(\lambda_{k_1}+m-1)}-\frac{m!\Gamma(\lambda_{k_2})}
    {\Gamma(\lambda_{k_2}+m-1)}\\
    &\qquad\qquad+\frac{m!\Gamma(\lambda_{k_1}+\lambda_{k_2}-1)}
    {\Gamma(\lambda_{k_1}+\lambda_{k_2}+m-2)}+\frac{m!(m-1)
    \Gamma(\lambda_{k_1})\Gamma(\lambda_{k_2})}
    {\Gamma(\lambda_{k_1}+\lambda_{k_2}+m-2)}\Bigg).
\end{align*}
Note that
\[
    \frac{m!\Gamma(\lambda_{k_j})}{\Gamma(\lambda_{k_j}+m-1)}
    =1, \qquad (2\le j\le 3).
\]
Applying Proposition \ref{asymp-trans}(ii) term by term then gives
\begin{align*}
    V_n^{[S]}
    &\sim\sum_{2\le k_1,k_2\le 3}
        \frac{A_{k_1}A_{k_2}n^{\lambda_{k_1}+
        \lambda_{k_2}-2}}{\Gamma(\lambda_{k_1})\Gamma(\lambda_{k_2})}
        \left(-1+ \frac{m!(m-1)\Gamma(\lambda_{k_1})
        \Gamma(\lambda_{k_2})}{\Gamma(\lambda_{k_1}
        +\lambda_{k_2}+m-2)-m!
        \Gamma(\lambda_{k_1}+\lambda_{k_2}-1)}\right)\\
    &=: F_1(\beta\log n)n^{2\alpha-2},
\end{align*}
where
\begin{align}\label{F1z}
\begin{split}
    F_1(z)
    &:=2\frac{\vert A_{2}\vert^2}
        {\vert\Gamma(\lambda_{2})\vert^2}\left(-1+\frac{m!(m-1)
        \vert\Gamma(\lambda_{2})\vert^2}{
        \Gamma(2\alpha+m-2)-m!\Gamma(2\alpha-1)}\right)\\
    &\quad\quad+2\Re\left(\frac{A_{2}^2e^{2i z}}
        {\Gamma(\lambda_{2})^2}\left(-1
        +\frac{m!(m-1)\Gamma(\lambda_{2})^2}
        {\Gamma(2\lambda_{2}+m-2)
        -m!\Gamma(2\lambda_{2}-1)}\right)\right).
\end{split}
\end{align}

\paragraph{Asymptotics of $V_n^{[SK]}$.} We now turn to $V_n^{[SK]}$.
If $3\le m\le 13$, then, by Lemma \ref{est-dn},
\[
    b_{n}^{[SK]}
    =O\left(n^{\alpha}\right),
\]
where $\alpha<1$. Consequently, by Proposition \ref{asymp-trans}(i),
\[
    V_n^{[SK]}
    \sim C_R n,
\]
for some constant $C_R$. For the remaining range where $m\ge 14$, we
have $\alpha>1$, and, by Lemma \ref{est-dn} and \eqref{diri-int2},
\begin{align*}
    b_n^{[SK]}
    &\sim\sum_{2\le k\le 3}
        \frac{(m-1)!A_{k}n^{\lambda_k}}
        {\Gamma(\lambda_k)}\int_{\substack{x_1+\cdots+x_m=1\\
        0\le x_1,\ldots,x_m\le 1}}\left(-1+
        \sum_{1\le l\le m}x_l^{\lambda_{k}-1}\right)
        \left(1+2\phi\sum_{1\le r\le m}
        x_r\log x_r\right)\mathrm{d}\mathbf{x}\\
    &\sim \sum_{2\le k\le 3}
        \frac{A_kn^{\lambda_{k}}}{\Gamma(\lambda_k)}
        \Bigg(1-2\phi\frac{m!\Gamma(\lambda_k+1)}
        {\Gamma(\lambda_{k}+m)}
        \bigl\{m\psi(\lambda_k+m)-\psi(\lambda_k+1)
        -(m-1)(1-\gamma)\bigr\}\Bigg).
\end{align*}
Now, we apply Proposition \ref{asymp-trans}(ii) and again after some
straightforward simplifications
\begin{align*}
    V_n^{[SK]}
    &\sim F_2\left(\beta\log n\right)n^{\alpha},
\end{align*}
where
\begin{align}\label{F2z}
\begin{split}
    F_2(z)
    &:=2\phi\Re\Bigg(\frac{(\lambda_2+m-1)
        A_2e^{i z}}{(m-1)\Gamma(\lambda_2)}
        \Bigg(\frac1{2\phi}-\frac{\lambda_2}{\lambda_2+m-1}
        \bigl\{m\psi(\lambda_2+m)-\psi(\lambda_2+1)\\
    &\qquad\qquad-(m-1)(1-\gamma)\bigr\}\Bigg)\Bigg).
\end{split}
\end{align}

\paragraph{Asymptotics of $V_n^{[K]}$.}
In a similar manner, we obtain, by Lemma \ref{est-dn},
\begin{align*}
    b_n^{[\SK]}
    &\sim (m-1)!n^2\int_{\substack{x_1+\cdots+x_m=1\\
        0\le x_1,\ldots,x_m\le 1}}\left(1+2\phi
        \sum_{1\le l\le m}x_l\log x_l\right)^2\text{d}
        \mathbf{x}\\
    &\sim 4\phi^2 n^2\left(H_m^{(2)}-\frac{2}{m+1}
        -\frac{\pi^2(m-1)}{6(m+1)}\right),
\end{align*}
where the last line follows from applying \eqref{diri-int1},
\eqref{diri-int2} and \eqref{diri-int3}. Applying again Proposition
\ref{asymp-trans}(ii) gives
\[
    V_n^{[K]}
    \sim C_\SK n^2,
\]
which completes the proof of Theorem~\ref{cov-snxn}. \qed

\subsection{Correlation between space requirement and NPL}
\label{tnpl}

The calculations in this case are similar to those for
$\rho(S_n,K_n)$, so we only sketch the major steps needed. Briefly,
most asymptotic estimates differ either by a factor of the occupancy
constant $\phi$ or its powers. The only exception is the additional
factor $\log n$ appearing in the covariance $\Cov(S_n,N_n)$ (see
\eqref{rn_coas}).

Let $\nu_n=\E(\SN_n)$. Then
\[
    \nu_n
    =m\sum_{0\le j\le n-m+1}\pi_{n,j}\nu_j+\mu_n-1.
\]
Consequently, by the asymptotic estimate (\ref{exp-mun}) and by
applying Proposition \ref{asymp-trans}(i), we obtain
\begin{equation}\label{exp-etan}
    \nu_n
    =2\phi^2 n\log n+c_2n+o(n),
\end{equation}
where, by Proposition~\ref{asymp-trans},
\begin{align}\label{c2}
	c_2 = \phi c_1 + 2\phi\left(\phi - \frac1{m-1}+
	\sum_{2\le \ell \le m-1}
	\frac{A_\ell}{2-\lambda_\ell}\right),
\end{align}
$c_1$ being given in \eqref{c1} and the $A_\ell$'s defined
in \eqref{Ak}. Indeed, consider the difference $\xi_n := \nu_n - \phi
\kappa_n$, which then satisfies the same recurrence \eqref{und-rec}
but with the toll function
\[
    \eta_n := \mu_n-1-\phi(n-m+1)=
	\phi m -\frac{m}{m-1} + \sum_{2\le \ell <m}
	A_\ell \binom{n+\lambda_\ell-1}{n},
\]
and $\xi_n=0$ for $1\le n\le m-2$. Then by applying Proposition~\ref{asymp-trans}, we obtain
\[
    c_2 - c_1\phi = 2\phi \sum_{j\ge m-1}
	\frac{\eta_j}{(j+1)(j+2)}.
\]
Since $\eta_n =  -\phi (n-m+1)$ for $1\le n\le m-2$ and
$\eta_0=\phi (m -1)-1$, we then derive \eqref{c2} by the relation
\[
	\sum_{j\ge 0} \frac{\binom{\lambda+j-1}{j}}
	{(j+1)(j+2)} = \int_0^1 (1-t)^{-\lambda+1}\dd t
	=\frac1{2-\lambda}\qquad(\Re(\lambda)<2).
\]
% \begin{align*}
% 	\sum_{j\ge 0} \frac{\eta_j}{(j+1)(j+2)}
% 	&= \phi m -\frac{m}{m-1}
% 	+ \sum_{2\le \ell <m}A_\ell
% 	\sum_{j\ge 0} \frac{\binom{\lambda_\ell+j-1}{j}}
% 	{(j+1)(j+2)}\\
% 	&= \phi m -\frac{m}{m-1} +	\sum_{2\le \ell <m}A_\ell
% 	\int_0^1 (1-t)^{-\lambda+1}\dd t\\
% 	&= \phi m -\frac{m}{m-1} +	\sum_{2\le \ell <m}
% 	\frac{A_\ell}{2-\lambda_\ell},
% \end{align*}
% and
% \begin{align*}
% 	-\sum_{0\le j\le m-2}\frac{\eta_j}{(j+1)(j+2)}
% 	= \phi \sum_{0\le j\le m-2}\frac{j-m+1}{(j+1)(j+2)}
% 	+\frac12
% 	= \phi (H_m - m)+\frac12.
% \end{align*}
% Adding these together, we see that
% \begin{align*}
%     c_2 - c_1\phi &= 2\phi
% 	\left(\phi m -\frac{m}{m-1} +	\sum_{2\le \ell <m}
% 	\frac{A_\ell}{2-\lambda_\ell} + \phi (H_m - m)+\frac12\right)\\
% 	&= 2\phi
% 	\left(\phi-\frac{1}{m-1} +	\sum_{2\le \ell <m}
% 	\frac{A_\ell}{2-\lambda_\ell} \right).
% \end{align*}
In particular, $c_2-\phi c_1$ equals
\[
    \tfrac{12}{125}, \tfrac{222}{2197}, \tfrac{44670}{456533},
	\tfrac{710}{7569}, \tfrac{8990170}{99806103},
	\tfrac{86959460}{1001561769},
	\tfrac{8225243460}{97908438529},
	\tfrac{9368632980}{114862129381},
\]
for $m=3,\dots,10$, and
\begin{tiny}
\begin{align*}
	&\frac{13941168359580}{175531341607271},
	\frac{15364018080180}{198165483844901},
	\frac{36778736979244260}{484907780151231137},
	\frac{39706104830251860}{534148059351752117},
	\frac{42542306175669300}{583013664848115773}, \\
	&\frac{362341148683714200}{5051607560589134719},
	\frac{60809828396490973800}{861420713064800471777},
	\frac{220781849887636437400}
	{3174476111482140491583},
	\frac{1589879045909940738152200}
	{23180880112213178399314917},\\ &
	\frac{66535629228892650939112}
	{982905224931956375768865},
	\frac{69399644946307963559272}
	{1037954891250806970920625},
	\frac{72191400913204902200872}
	{1092384284013327674677545}, \\ &
	\frac{911488027263952226045421464}
	{13945777153309079949132939375},
	\frac{943834826916499599456679304}
	{14593082411910111966602252205},
	\frac{3048229719576792424490262245800}
	{47603282606571951420821994029889}
	,\\ &
	\frac{3144754504512378111611222765800}
	{49580602253255626178697360169689},%\\ &
	\frac{787117453959995151898324789769400}
	{12523181563980976087610969389067627},
	\frac{809570585901011449194661971389400}
	{12992983079952314295925927936613927}, \\ &
	\frac{20280854972612671613961769087339836600}
	{328217277361176269245342166728792498003},
	\frac{20806237502125190663861808383733444600}
	{339424705221771320114642916145949390923}
\end{align*}
\end{tiny}
for $m=11,\dots,30$.

Let $\bar{\SN}_n=\SN_n-\nu_{n}$. Then the moment-generating function
$\bar{P}_n(u,v) :=\E\bigl(e^{\bar{S}_n u+\bar{\SN}_nv}\bigr)$
satisfies for $n\ge m-1$
\[
    \bar{P}_n(u,v)
    =\frac{1}{\binom{n}{m-1}}
        \sum_{\mathbf{j}}P_{j_1}(u+v,v)\cdots
        P_{j_m}(u+v,v)e^{\Delta_{\mathbf{j}}u
        +\delta_{\mathbf{j}}v},
\]
with the initial conditions $\bar{P}_n(u,v)=1$ for $0\le n\le m-2$
and
\[
    \delta_{\mathbf{j}}
    :=-\nu_n+ \sum_{1\le l\le m}
        \left(\nu_{j_{l}}+\mu_{j_l}\right).
\]
Now define
\[
    V^{[SN]}_n
    :=\Cov(S_n,\SN_n)
        \qquad\text{and}\qquad
    V^{[N]}_n
    :=\V(\SN_n).
\]
Then
\begin{align*}
    V^{[X]}_n
    &=m\sum_{0\le l\le n-m+1}\pi_{n,j}V^{[X]}_j+b_{n}^{[X]},
    \qquad(X\in\{SN,N\}),
\end{align*}
where
\begin{align*}
    b_{n}^{[SN]}
    &=\frac{1}{\binom{n}{m-1}}
        \sum_{\mathbf{j}}\left(V_j^{[S]}
        +\Delta_{\mathbf{j}}\delta_{\mathbf{j}}\right)\\
    &=V_n^{[S]}+\frac{1}{\binom{n}{m-1}}\sum_{\mathbf{j}}
        \left(\Delta_{\mathbf{j}}\delta_{\mathbf{j}}
        -\Delta_{\mathbf{j}}^2\right)\\
    b_{n}^{[N]}
    &=\frac{1}{\binom{n}{m-1}}\sum_{\mathbf{j}}
        \left(V_j^{[S]}+2V^{[SN]}_j+\delta_{\mathbf{j}}^2\right)\\
    &=V_n^{[S]}+2V^{[SN]}_n+\frac{1}{\binom{n}{m-1}}
        \sum_{\mathbf{j}}\left(\delta_{\mathbf{j}}^2
        -2\Delta_{\mathbf{j}}\delta_{\mathbf{j}}
        +\Delta_{\mathbf{j}}^2\right).
\end{align*}

As in the case of KPL, the following uniform estimate is crucial in
our analysis.
\begin{lem}\label{est-d}
Uniformly in $\mathbf{j}$,
\[
    \delta_{\mathbf{j}}
    =\phi n\left(1+2\phi
        \sum_{1\le l\le m}\frac{j_l}{n}\log\frac{j_l}{n}\right)
        +o(n).
\]
\end{lem}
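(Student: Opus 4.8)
The plan is to mimic the derivation of the analogous estimate for $\nabla_{\mathbf j}$ in Lemma~\ref{est-dn}, replacing the role of $\kappa_n$ by $\nu_n$ and carefully tracking the extra term $\sum_l\mu_{j_l}$. First I would substitute into the definition of $\delta_{\mathbf j}$ the expansion \eqref{exp-etan} for $\nu_n$, namely $\nu_n=2\phi^2 n\log n+c_2 n+o(n)$, applied both to $\nu_n$ and to each $\nu_{j_l}$, together with the first-order estimate $\mu_{j_l}=\phi j_l+O(1)$ from \eqref{exp-nun}. This gives
\[
    \delta_{\mathbf j}
    = -2\phi^2 n\log n - c_2 n
    + \sum_{1\le l\le m}\Bigl(2\phi^2 j_l\log j_l + c_2 j_l + \phi j_l\Bigr)
    + o(n) + (\text{lower-order terms}),
\]
where the $o(n)$ is uniform in $\mathbf j$ because the error terms in \eqref{exp-etan} and \eqref{exp-nun} are $o(\cdot)$ of the argument and $\sum_l j_l = n-m+1$.

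The next step is to collect terms. Using $\sum_l j_l = n-m+1 = n+O(1)$, the linear contributions $-c_2 n + \sum_l(c_2+\phi)j_l = \phi n + O(1)$, so the combined coefficient of $n$ is exactly $\phi$. For the $\log$ terms, write $j_l\log j_l = j_l\log n + j_l\log(j_l/n)$, so that $\sum_l 2\phi^2 j_l\log j_l - 2\phi^2 n\log n = 2\phi^2 n\log n\cdot\frac{\sum_l j_l}{n} - 2\phi^2 n\log n + 2\phi^2 n\sum_l \frac{j_l}{n}\log\frac{j_l}{n}$, and the first two terms cancel up to $O(\log n)=o(n)$. What remains is $2\phi^2 n\sum_l\frac{j_l}{n}\log\frac{j_l}{n}$. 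Factoring out $\phi n$ yields exactly $\phi n\bigl(1 + 2\phi\sum_{1\le l\le m}\frac{j_l}{n}\log\frac{j_l}{n}\bigr)+o(n)$, which is the claimed formula; indeed $\delta_{\mathbf j}$ is simply $\phi$ times $\nabla_{\mathbf j}$ up to $o(n)$, consistent with the remark in Section~\ref{tnpl} that most estimates differ by a power of $\phi$.

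The main obstacle is ensuring the uniformity in $\mathbf j$, i.e.\ that all error terms are $o(n)$ with a constant independent of the partition $\mathbf j=(j_1,\dots,j_m)$. The subtle point is that some $j_l$ may be small (even $0$), so one cannot naively expand $\mu_{j_l}$ or $\nu_{j_l}$ using asymptotic expansions valid only for large arguments; instead one must split the sum into ``large'' indices ($j_l \ge \varepsilon n$ for suitable $\varepsilon$) where the expansions apply, and ``small'' indices whose total contribution is $O(\varepsilon n\log n)$ plus a bounded number of terms each $O(1)$, hence $o(n)$ after letting $\varepsilon\to 0$ slowly. Since $x\log x$ extends continuously to $0$, the function $\sum_l \frac{j_l}{n}\log\frac{j_l}{n}$ is itself bounded uniformly, so no divergence arises in the main term. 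Once the large/small split is handled exactly as in the proof of Lemma~\ref{est-dn}, the result follows by the same ``standard manipulations.''

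\pf This follows by substituting the asymptotic approximations \eqref{exp-mun}, \eqref{exp-nun} and \eqref{exp-etan} into the definition of $\delta_{\mathbf{j}}$, splitting the sum over $\mathbf{j}$ according to whether $j_l$ is large or small, and proceeding with standard manipulations exactly as in the proof of Lemma~\ref{est-dn}. \qed
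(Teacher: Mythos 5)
Your proof is correct and follows the same route as the paper, which for this lemma gives only the one-line justification ``By the definition of $\delta_{\mathbf{j}}$ and the estimates (\ref{exp-mun}) and (\ref{exp-etan}).'' One small imprecision: the intermediate bound $\mu_{j_l}=\phi j_l+O(1)$ is valid only for $3\le m\le 13$ (when $\alpha\le 1$), and it comes from \eqref{exp-mun}, not \eqref{exp-nun}; for $m\ge 14$ one must use $\mu_{j_l}=\phi j_l+O\bigl(j_l^{\alpha-1}\bigr)$, but since $\alpha<2$ the total error $\sum_l O\bigl(j_l^{\alpha-1}\bigr)=O\bigl(n^{\alpha-1}\bigr)=o(n)$ still holds, so your conclusion and your remarks on uniformity are unaffected.
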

\pf By the definition of $\delta_{\mathbf{j}}$ and the estimates
(\ref{exp-mun}) and (\ref{exp-etan}). \qed

Note that the expansion differs from that for $\nabla_{\mathbf{j}}$
in Lemma \ref{est-dn} by an additional factor $\phi$.

If $3\le m\le 13$, then, by Lemmas \ref{est-dn} and \ref{est-d},
\[
    b_{n}^{[SN]}
    = C_S n+O\left(n^{1-\ve}\right),
\]
for a sufficiently small $\ve>0$. Thus, by Proposition
\ref{asymp-trans} (i),
\[
    V^{[SN]}_n
    \sim \frac{C_Sn\log n}{H_m-1}.
\]
Assume now $m\ge 14$. Then, again from Lemma \ref{est-dn} and Lemma
\ref{est-d} together with the known asymptotics of $V_n^{[S]}$, we
see that
\[
    b_n^{[SN]}
    \sim \frac{1}{\binom{n}{m-1}}
        \sum_{\mathbf{j}}\Delta_{\mathbf{j}}\delta_{\mathbf{j}}
    \sim\frac{\phi}{\binom{n}{m-1}}
        \sum_{\mathbf{j}}\Delta_{\mathbf{j}}\nabla_{\mathbf{j}}
    \sim \phi b_n^{[SK]}.
\]
Thus we deduce, as in the proof for $V_n^{[SK]}$,
\[
    V_n^{[SN]} \sim \phi V_n^{[SK]} \sim
    \phi F_2(\beta\log n) n^\alpha.
\]
Similarly, we have
\[
    b_n^{[\SN]}
    \sim\frac{1}{\binom{n}{m-1}}
        \sum_{\mathbf{j}}\delta_{\mathbf{j}}^2
    \sim\frac{\phi^2}{\binom{n}{m-1}}
        \sum_{\mathbf{j}}\nabla_{\mathbf{j}}^2
    \sim \phi^2 b_n^{[\SK]}.
\]
Consequently,
\[
    V_n^{[N]} \sim \phi^2 V_n^{[K]}\sim
    \phi^2 C_K n^2.
\]
This completes the proof of Theorem~\ref{rn_coas}. \qed

\section{Bivariate distributional asymptotics for space requirement
and path lengths}

\label{sec_bivariate}

In this section, we identify the asymptotic joint distributional
behaviors of the pairs $(N_n,S_n)$ and $(K_n,S_n)$. Although the
sequences $(N_n)$ and $(K_n)$ converge after normalization for all
$m\ge 3$ with limit distributions depending on $m$, we split the
analysis into two cases depending on $3\le m\le 26$ or $m>26$ due to
the phase change in the limit behavior of $S_n$. We discuss the pair
$(N_n,S_n)$ in detail in Sections~\ref{secns<=26} and \ref{secns>26}.
(the corresponding analysis for $(K_n,S_n)$ is similar and we will
not give details). Moreover, in Section \ref{univariate}, we will
show that the univariate limit random variables of the normalized
sequences $(N_n)$ and $(K_n)$ do have the same distribution. We
introduce the following notation
\begin{align}\label{exp_mu}
    \mu(n)
    :=\mu_n = \E[S_n]=\phi (n+1)
        + \Re(\theta n^{\lambda_2-1})
        + o(1\vee n^{\alpha-1}) ,
\end{align}
where $\theta := 2A_2/\Gamma(\lambda_2)$; see \eqref{exp-mun}.
Similarly, write $\kappa(n) = \kappa_n=\E(K_n)$ and $\nu(n) = \nu_n
=\E(N_n)$.

\subsection{Node path length and space requirement. I.
$\mathbf{m\ge 27}$}
\label{secns<=26}

We give in this section the precise formulation of the periodic case
$m\ge27$ of Theorem \ref{thm12}.

\paragraph{Normalization.} We first normalize the vector
$Q_n=(N_n,S_n)$ as follows. Let $Y_0:=0$ and
\begin{align*}%\label{norma0}
    Y_n
    :=\left(\frac{N_n-\E[N_n]}{n},\frac{S_n-\phi n}
        {n^{\alpha-1}}\right), \qquad (n\ge 1).
\end{align*}
Then the recurrence (\ref{rec1}) implies for $n\ge m-1$
\begin{align} \label{mod_rec}
    \left(Y_n\right)^{\mathrm{t}}
    \stackrel{d}{=}\sum_{1\le r\le m}
        A^{(n)}_r \left(Y^{(r)}_{I^{(n)}_r}
    \right)^{\mathrm{t}}+ b^{(n)},
\end{align}
where
\begin{align*}
    A^{(n)}_r
    :=\left[
    \begin{array}{cc}\displaystyle
        \frac{I^{(n)}_r}{n} & \displaystyle
        \frac{\bigl(I^{(n)}_r\bigr)^{\alpha-1}}{n}\\
        0&\displaystyle
		\left(\frac{I^{(n)}_r}{n}\right)^{\alpha-1}
    \end{array}\right], \quad
    b^{(n)}:= \left(\!\!
    \begin{array}{c}\displaystyle
        \frac{1}{n}\left(\sum_{1\le r\le m}
            \left(\nu\bigl(I^{(n)}_r\bigr)
            +\phi I^{(n)}_r\right) -\nu(n)\right)\\
        \displaystyle
        -\phi\frac{m-1}{n^{\alpha-1}}
    \end{array}\!\!\right),
\end{align*}
with assumptions on independence and on identical distributions
as in Section~\ref{sec_pre}. The expansion (\ref{exp-etan}) implies
\begin{align*}
    \frac{1}{n}\left(\sum_{1\le r\le m}
        \left(\nu\bigl(I^{(n)}_r\bigr)
        +\phi I^{(n)}_r\right) -\nu(n)\right)
    = \phi + 2\phi^2\sum_{1\le r\le m}
        \frac{I^{(n)}_r}{n} \log \frac{I^{(n)}_r}{n}
        + o(1).
\end{align*}
Moreover, by \eqref{lim_ij}, we obtain the $L_2$-convergence
\begin{align}\label{conv_in}
    \frac{I^{(n)}}{n}
    \stackrel{L_2}{\longrightarrow} (V_1,\ldots,V_m)=:V.
\end{align}
This implies the $L_2$-convergences
\begin{align}\label{def_b1}
    \frac{1}{n}\left(\sum_{1\le r\le m}
        \left(\nu\bigl(I^{(n)}_r\bigr)
        +\phi I^{(n)}_r\right) -\nu(n)\right)
    \to \phi + 2\phi^2\sum_{1\le r\le m}  V_r \log
        V_r=:b_N,
\end{align}
and
\begin{align}\label{conv_bn}
    b^{(n)}
    \to \left(\!\!
    \begin{array}{c}
        b_N \\ 0
    \end{array}\!\!\right),\qquad
    A^{(n)}_r
    \to \left[
    \begin{array}{cc}
        V_r & 0\\ 0& V_r^{\alpha-1}
    \end{array}\right] .
\end{align}

For our limit result for $m\ge 27$, we first define a distribution
which governs the asymptotics.

\paragraph{The limiting map.} To describe the asymptotic behavior of
$Q_n$, we use the following probability distribution on the space $\R
\times \C$. Let $\mathcal{M}^{\R \times \C}$ denote the space of all
distributions $\mathcal{L}(Z,W)$ on $\R \times \C$ and
$\mathcal{M}^{\R \times \C}_2$ the subspace of distributions with
finite second moment, i.e., $\|(Z,W)\|_2:= (\E[Z^2]+\E[|W|^2])^{1/2}
<\infty$. For $\gamma=(\gamma_1,\gamma_2) \in \R \times \C$, let
\begin{align*}
    \mathcal{M}^{\R \times \C}_2(\gamma)
    :=\Big\{\mathcal{L}(Z,W)\in
        \mathcal{M}^{\R \times \C}_2\,\Big|\, \E[Z]=\gamma_1,
    \E[W]
    =\gamma_2\Big\}.
\end{align*}
We define the following map $T_N$ on $\mathcal{M}^{\R \times \C}_2$:
\begin{align}
    T_N: \mathcal{M}^{\R \times \C}
    &\to\mathcal{M}^{\R \times \C}  \nonumber\\
    \mathcal{L}(Z,W)
    &\mapsto \mathcal{L}\left( \sum_{1\le r\le m}
    \left[
    \begin{array}{cc}
        V_r & 0\\ 0& V_r^{\lambda_2-1}
    \end{array}\right]\left(\!\!
    \begin{array}{c}
        Z^{(r)} \\ W^{(r)}
    \end{array}\!\!\right) + \left(\!\!
    \begin{array}{c}
        b_N \\ 0
    \end{array}\!\!\right) \right),
    \label{def_map_t}
\end{align}
where $(Z^{(1)},W^{(1)}),\ldots, (Z^{(m)},W^{(m)})$, $V$ are
independent, $(Z^{(r)},W^{(r)})$ is distributed as $(Z,W)$ for all
$r=1,\ldots,m$ and $b_N$ is defined in (\ref{def_b1}). The
$\|\,\cdot\,\|_2$-norm induces the minimal $L_2$-metric $\ell_2$ by
\begin{align*}
    \ell_2(\mu,\nu)
    := \inf\{ \| X-Y\|_2\,:\,
    \mathcal{L}(X)
    =\mu, \mathcal{L}(Y)=\nu\}, \qquad (\mu,\nu \in
        \mathcal{M}^{\R \times \C}_2).
\end{align*}
Given random variables $X,Y$, write for simplicity
$\ell_2(X,Y)=\ell_2(\mathcal{L}(X),\mathcal{L}(Y))$. For any
distributions $\mu,\nu \in \mathcal{M}^{\R \times \C}_2$, there exist
optimal $\ell_2$-couplings, i.e.~random vectors $\Upsilon_1,
\Upsilon_2$ in $\R \times \C$ with $\ell_2(\mu,\nu) = \|
\Upsilon_1-\Upsilon_2\|_2$.

\begin{lem} \label{fplem1} Assume $m\ge 27$. For any $\gamma \in \R
\times \C$, the restriction of the map $T_N$ defined in
(\ref{def_map_t}) to $\mathcal{M}^{\R \times \C}_2(\gamma)$ is a
(strict) contraction with respect to $\ell_2$, and has a unique fixed
point in $\mathcal{M}^{\R \times \C}_2(\gamma)$.
\end{lem}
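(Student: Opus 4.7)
The plan is to use the now-standard coupling approach of the contraction method (as in \cite{rosler01,neininger01}), which reduces everything to two ingredients: (a) the map $T_N$ leaves $\mathcal{M}^{\R\times\C}_2(\gamma)$ invariant, and (b) $T_N$ contracts strictly on this subspace in the $\ell_2$-metric. Given these, the Banach fixed-point theorem yields the unique fixed point (note that $\mathcal{M}^{\R\times\C}_2(\gamma)$ equipped with $\ell_2$ is a complete metric space).

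For invariance, two things must be checked. Square-integrability of $T_N(Z,W)$ given $\|(Z,W)\|_2<\infty$ is routine, since $V_r\in[0,1]$ and $|V_r^{\lambda_2-1}|=V_r^{\alpha-1}\le 1$, and $b_N$ defined in (\ref{def_b1}) is bounded (as $x\log x$ is bounded on $[0,1]$). The mean-preservation is the first nontrivial step. Taking expectations in (\ref{def_map_t}), the first component requires $\E[b_N]=0$, which in turn reduces, via $V\sim\mathrm{Dirichlet}(1,\ldots,1)$ and the identity $m\E[V_1\log V_1]=1-H_m$, to the definition $\phi=1/(2(H_m-1))$. The second component requires $m\E[V_1^{\lambda_2-1}]=1$, which is exactly the indicial equation $\Lambda(\lambda_2)=0$ in (\ref{indicial}), since $m\E[V_1^{\lambda_2-1}]=m!\Gamma(\lambda_2)/\Gamma(\lambda_2+m-1)=m!/(\lambda_2\cdots(\lambda_2+m-2))$.

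For the contraction, take arbitrary $(Z_1,W_1),(Z_2,W_2)\in\mathcal{M}^{\R\times\C}_2(\gamma)$ realized as optimal $\ell_2$-couplings, and couple the two images of $T_N$ using \emph{the same} spacing vector $V$ and, for each $r$, the same $(V_r,Z_1^{(r)},W_1^{(r)},Z_2^{(r)},W_2^{(r)})$ with $(Z_j^{(r)},W_j^{(r)})$ independent across $r$. Since the transforming matrices are diagonal and the means of $Z_1,Z_2$ (resp. $W_1,W_2$) coincide, expanding the squared $L_2$-norm yields vanishing cross terms and gives
\begin{align*}
\|T_N(Z_1,W_1)-T_N(Z_2,W_2)\|_2^2
&= m\E[V_1^2]\,\|Z_1-Z_2\|_2^2
 + m\E[V_1^{2(\alpha-1)}]\,\|W_1-W_2\|_2^2.
\end{align*}
Optimizing over the couplings, one obtains $\ell_2(T_N\mu_1,T_N\mu_2)^2\le \max\{m\E[V_1^2],\,m\E[V_1^{2\alpha-2}]\}\cdot \ell_2(\mu_1,\mu_2)^2$.

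The anticipated main obstacle, and the only place where the hypothesis $m\ge27$ is decisive, is verifying that both contraction constants are strictly less than one. The first is easy: $m\E[V_1^2]=2/(m+1)<1$. For the second, write $g(s):=m\E[V_1^{s-1}]=m!/(s(s+1)\cdots(s+m-2))$. Then $g$ is strictly decreasing on $(0,\infty)$ with $g(1)=1$, so $g(s)<1$ whenever $s>1$. Here $s=2\alpha-1$, and the assumption $m\ge 27$ forces $\alpha>3/2$ (see Table~\ref{tab-alpha} and the subsequent paragraph), hence $s>2>1$, and $m\E[V_1^{2\alpha-2}]=g(2\alpha-1)<1$. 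Banach's fixed point theorem then completes the proof.
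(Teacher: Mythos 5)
Your overall route is the same as the paper's: invariance of $\mathcal{M}^{\R\times\C}_2(\gamma)$ is checked by the Beta-integral computation $m\E[V_1^{\lambda_2-1}]=m!\,\Gamma(\lambda_2)/\Gamma(\lambda_2+m-1)=1$ (the indicial equation) together with $\E[b_N]=0$, and the contraction is the standard $\ell_2$-coupling estimate; the paper simply cites this last calculation (a slight modification of Lemma 3.1 in \cite{neininger01}) whereas you write it out, correctly obtaining the constants $m\E[V_1^2]=2/(m+1)$ and $m\E[V_1^{2\alpha-2}]$ with vanishing cross terms thanks to the common mean $\gamma$.

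There is, however, one concrete error at exactly the decisive point. With $g(s):=m\E[V_1^{s-1}]=m!/\bigl(s(s+1)\cdots(s+m-2)\bigr)$ one has $g(1)=m!/(m-1)!=m$, not $1$; the value $1$ is attained at $s=2$, since $2\cdot3\cdots m=m!$. Consequently your stated criterion ``$g(s)<1$ whenever $s>1$'' is false, and if it were true it would make $T_N$ a contraction already for $14\le m\le 26$ (where $1<\alpha<\tfrac32$, so $s=2\alpha-1\in(1,2)$ and in fact $g(2\alpha-1)>1$), erasing precisely the threshold the lemma is about. Your conclusion nevertheless survives because you only apply the criterion with $s=2\alpha-1>2$, which follows from $\alpha>\tfrac32$ for $m\ge 27$; with the corrected anchor $g(2)=1$ and strict monotonicity of $g$, the bound $m\E[V_1^{2\alpha-2}]=g(2\alpha-1)<1$ holds and the Banach fixed-point argument goes through as you describe.
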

\begin{proof} Let $\gamma \in \R \times \C$ be arbitrary. For $\mu
\in \mathcal{M}^{\R \times \C}_2(\gamma)$, let $\Upsilon$ be a random
variable with distribution $T(\mu)$. First, note that
$\|\Upsilon\|_2<\infty$ by independence and $\|b_N\|_2 <\infty$ (we
even have $\|b_N\|_\infty <\infty$). To see that
$\E[\Upsilon]=\gamma$, note that $\E[b_N]=0$ and $\sum_{1\le r\le m}
V_r=1$ almost surely. Hence, we only need to show that
$\E[V_1^{\lambda_2-1}]=1/m$. Since $V_1$ has density $x\mapsto
(m-1)(1-x)^{m-2}$ for $x\in[0,1]$, we see that
\begin{align*}
    \E\left[V_1^{\lambda_2-1}\right]
    &=\int_0^1(m-1)(1-x)^{m-2}x^{\lambda_2-1}\,\text{d}x
    =(m-1)\frac{\Gamma(m-1)\Gamma(\lambda_2)}
        {\Gamma(m+\lambda_2-1)}=\frac{1}{m},
\end{align*}
because $\Gamma(m+\lambda_2-1)/\Gamma(\lambda_2)=m!$. This implies
that $\E[\Upsilon]=\gamma$, and thus $T(\mu)\in \mathcal{M}^{\R
\times \C}_2(\gamma)$. This in turn implies that the restriction of
$T$ to $\mathcal{M}^{\R \times \C}_2(\gamma)$ maps into
$\mathcal{M}^{\R \times \C}_2(\gamma)$.

That the restriction of $T$ to $\mathcal{M}^{\R \times \C}_2(\gamma)$
is a contraction with respect to $\ell_2$ follows from a standard
calculation, e.g., with a slight modification as in \cite[Lemma
3.1]{neininger01}.
\end{proof}

\paragraph{Proof of Theorem~\ref{thm12}: NPL.} Denote by
$\mathcal{L}(X,\Lambda)$ the unique fixed point of the restriction of
$T_N$ to $\mathcal{M}^{\R \times \C}_2((0,\theta))$, with $\theta$
defined in (\ref{exp_mu}). By Lemma \ref{fplem1}, the distribution
$\mathcal{L}(X,\Lambda)$ as in the statement of the Theorem is
well-defined. The fixed point property of $(X,\Lambda)$ implies that
\begin{align}\label{mod_fix}
    \left(\!\! \begin{array}{c}
        X \\ \Re(n^{i\beta} \Lambda)
    \end{array}\!\!\right)
    \stackrel{d}{=} \left(\!\!
    \begin{array}{c} \displaystyle
        \sum_{1\le r\le m} V_r X^{(r)} +b_N\\
        \displaystyle
        \sum_{1\le r\le m} \Re(n^{i\beta}
            V_r^{\lambda_2-1}\Lambda^{(r)})
    \end{array}\!\!\right),
\end{align}
where $(V_1,\ldots,V_m)$, $(X^{(1)},\Lambda^{(1)}),\ldots,
(X^{(m)},\Lambda^{(m)})$ are independent, and
$(X^{(r)},\Lambda^{(r)})$ are identically distributed as
$(X,\Lambda)$.

Define now three matrices
\begin{align*}
    \widetilde{A}^{(n)}_r&:= \left[
    \begin{array}{cc} \displaystyle
        \frac{I^{(n)}_r}{n} &0 \\
        0& \displaystyle
        \left(\frac{I^{(n)}_r}{n}\right)^{\alpha-1}
    \end{array}\right], \;
    B^{(n)}_r:= \left[
    \begin{array}{cc}
        V_r &0\\
        0& n^{i\beta} V_r^{\lambda_2-1}
    \end{array}\right],\;
    C^{(n)}_r&:= \left[
    \begin{array}{cc} \displaystyle
        \frac{I^{(n)}_r}{n} &0\\
        0& \displaystyle
        \frac{\left(I^{(n)}_r\right)^{\lambda_2-1}}{n^{\alpha-1}}
    \end{array}\right],
\end{align*}
and write
\begin{align*}
    \Delta(n)
    :=\ell_2(Y_n,(X,\Re(n^{i\beta}\Lambda))).
\end{align*}
To bound $\Delta(n)$, we use the following coupling between the
$Y^{(r)}_j$'s appearing in the recurrence (\ref{mod_rec}) and the
quantities appearing on the right-hand side of (\ref{mod_fix}). Note
that for any pair of distributions on $\R^2$, there always exists an
optimal $\ell_2$-coupling. We first fix the random vectors
$(X^{(1)},\Lambda^{(1)}),\ldots, (X^{(m)},\Lambda^{(m)})$. Then, for
each $j\ge 1$ and $r=1,\ldots,m$, we choose $Y^{(r)}_j$ as an optimal
$\ell_2$-coupling to $(X^{(r)},\Re(j^{i\beta}\Lambda^{(r)}))$ on
$\R^2$. This can be done such that the sequences
$$\left(Y^{(1)}_j,(X^{(1)},\Re(j^{i\beta}\Lambda^{(1)}))\right)_{j\ge
1},\ldots, \left(Y^{(m)}_j,(X^{(m)},\Re(j^{i\beta} \Lambda^{(m)}
))\right)_{j\ge 1}$$ are independent and independent of
$(I^{(n)},V_1,\ldots,V_m)$. Note that these couplings and
independence assumptions do not violate equations (\ref{mod_rec}) and
(\ref{mod_fix}). Hence, we obtain
\begin{align*}
    \Delta(n)
    \le \left\|\sum_{1\le r\le m}
        A^{(n)}_r\left(Y^{(r)}_{I^{(n)}_r}\right)^t
        +b^{(n)} -\Re \left( \sum_{1\le r\le m} B^{(n)}_r
        \left(\!\!
    \begin{array}{c}
        X^{(r)} \\  \Lambda^{(r)}
    \end{array}\!\!\right) + \left(\!\!
    \begin{array}{c}
        b\\ 0
    \end{array}\!\!\right) \right) \right\|_2.
\end{align*}
Using the triangle inequality and writing the components as
$Y_n=(Y_{n,1},Y_{n,2})$, we obtain
\begin{align*}
    \Delta(n)
    \le &  \left\| \sum_{1\le r\le m}
        \left(
		\widetilde{A}^{(n)}_r\left(Y^{(r)}_{I^{(n)}_r}\right)^t-
        \Re \left(C^{(n)}_r  \left(\!\! \begin{array}{c}
        X^{(r)} \\  \Lambda^{(r)} \end{array}\!\!\right)
        \right) \right)  \right\|_2 \\
    & +  \left\| \sum_{1\le r\le m} \Re \left(C^{(n)}_r \left(\!\!
    \begin{array}{c}
        X^{(r)} \\  \Lambda^{(r)}
    \end{array}\!\!\right)\right)
        - \Re\left(B^{(n)}_r  \left(\!\!
    \begin{array}{c}
        X^{(r)} \\  \Lambda^{(r)}
    \end{array}\!\!\right) \right) \right\|_2\\
    &+ \sum_{1\le r\le m} \left\|
        \frac{(I^{(n)}_r)^{\alpha-1}}{n}
        Y^{(r)}_{I^{(n)}_r, 2} \right\|_2
        + \left\| b^{(n)} - \left(\!\!
    \begin{array}{c}
        b\\ 0
    \end{array}\!\!\right) \right\|_2.
\end{align*}
The second and the fourth summand on the right-hand side tend to zero
as $n\to \infty$ by (\ref{conv_in}) and (\ref{conv_bn}). For the
third summand, note that the asymptotic behavior of the normalized
size $Y_{n,2}$ of $m$-ary search trees is covered by Theorem 1,
eq.~(2) in \cite{chern01}. In particular, from that theorem we obtain
$\sup_{n\ge 1} \|Y_{n,2}\|_2<\infty$. Taking into account the
prefactor $(I^{(n)}_r)^{\alpha-1}/n$ and conditioning on $I^{(n)}_r$,
we find that the third summand also tends to zero.

To bound the first summand in the latter display, we write, for
$r=1,\ldots,m$ and $n\ge m-1$,
\begin{align*}
    W^{(n)}_r
    := \widetilde{A}^{(n)}_r\left(Y^{(r)}_{I^{(n)}_r}\right)^t-
        \Re \left(C^{(n)}_r  \left(\!\!
    \begin{array}{c}
        X^{(r)} \\  \Lambda^{(r)}
    \end{array}\!\!\right)  \right)
\end{align*}
and denote the components of $W^{(n)}_r$ by $W^{(n)}_r
=(W^{(n)}_{r,1}, W^{(n)}_{r,2})$. For $r=1,\ldots,m$, we have
\begin{align}
    \E \left\| \sum_{1\le r\le m} W^{(n)}_r \right\|^2
    = \E \left[  \sum_{1\le r\le m} \left\{(W^{(n)}_{r,1})^2
        + (W^{(n)}_{r,2})^2\right\} +
        \sum_{r\neq s} \left\{W^{(n)}_{r,1}W^{(n)}_{s,1} +
        W^{(n)}_{r,2}W^{(n)}_{s,2}\right\}\right].
    \label{dec_wn}
\end{align}
We bound the three types of terms individually. First, for
the dominant term
\begin{align*}
    \lefteqn{ \E \left[ (W^{(n)}_{r,1})^2
    + (W^{(n)}_{r,2})^2\right]}\\
    &= \E \left[ \left(\frac{I^{(n)}_r}{n}\right)^2 \left(
    Y^{(r)}_{I^{(n)}_r,1} -X^{(r)}\right)^2   +
    \left(\frac{I^{(n)}_r}{n}\right)^{2(\alpha-1)} \left(
    Y^{(r)}_{I^{(n)}_r,2} - \Re\left( (I^{(n)}_r)^{i\beta}
    \Lambda^{(r)}\right)\right)^2\right]
\end{align*}
\newpage
\begin{align*}
    &\le \E \left[
    \left(\frac{I^{(n)}_r}{n}\right)^{2(\alpha-1)}
    \left(\left( Y^{(r)}_{I^{(n)}_r,1} -X^{(r)}\right)^2 +
    \left(Y^{(r)}_{I^{(n)}_r,2} - \Re\left( (I^{(n)}_r)^{i\beta}
    \Lambda^{(r)}\right)\right)^2  \right) \right]
\end{align*}
where we used the inequality $(I^{(n)}_r/n)^2 \le
(I^{(n)}_r/n)^{2(\alpha-1)}$. Conditioning on $I^{(n)}_r$ and using
that $Y^{(r)}_j$ and $(X^{(r)},\Re(j^{i\beta}\Lambda^{(r)}))$ are
optimal couplings, we obtain
\begin{align*}
    \E \left[ (W^{(n)}_{r,1})^2 + (W^{(n)}_{r,2})^2\right]
    \le \E\left[\left(\frac{I^{(n)}_r}{n}\right)^{2(\alpha-1)}
    \Delta^2(I^{(n)}_r)\right].
\end{align*}
For the cross-product terms in (\ref{dec_wn}), assume $1\le r,s\le m$
with $r\ne s$. Note that, by independence, we have $\E[W^{(n)}_{r,1}
W^{(n)}_{s,1}]=0$ conditioning on $I^{(n)}_r$ and $I^{(n)}_s$. From
the expansion (\ref{exp_mu}), we obtain
\begin{align*}
    \E \left[Y_n\right] =  \left(\!\! \begin{array}{c} 0 \\
    \Re(\theta n^{i\beta}) + R(n) \end{array}\!\!\right),
\end{align*}
with a remainder $R(n)=o(1)$. By independence and
$\E[\Lambda]=\theta$, we obtain $\E[W^{(n)}_{r,2}]=
\E[(I^{(n)}_r/n)^{\alpha-1} R(I^{(n)}_r)]$, and
\begin{align*}
    \E[W^{(n)}_{r,2}W^{(n)}_{s,2}]
    =\E\left[\left(\frac{I^{(n)}_r}{n}\cdot
    \frac{I^{(n)}_s}{n}\right)^{\alpha-1}
    R(I^{(n)}_r)R(I^{(n)}_s) \right]
\end{align*}
which tends to $0$ by the dominated convergence theorem as
$R(I^{(n)}_r), R(I^{(n)}_s)\to 0$ in probability.

Hence, collecting all estimates, we obtain
\begin{align}\label{est_delta}
    \Delta(n) \le \left(\E\left[\sum_{1\le r\le m}
    \left(\frac{I^{(n)}_r}{n}\right)^{2(\alpha-1)}
    \Delta^2(I^{(n)}_r) \right] +o(1)  \right)^{1/2} + o(1).
\end{align}
Now $\Delta(n)\to 0$ follows from a standard argument since we have
\begin{align*}
    \lim_{n\to\infty} \sum_{1\le r\le m}
    \E \left[\left(\frac{I^{(n)}_r}{n}\right)^{2(\alpha-1)}
    \right] = \sum_{1\le r\le m} \E\left[V_r^{2(\alpha-1)}\right]
    = m^2B(m,2\alpha-1)<1;
\end{align*}
(cf.\ the proof of Theorem 4.1 in \cite{neininger01}). This proves
Theorem~\ref{thm12} for NPL.

\subsection{Node path length and space requirement. II.
$\mathbf{3\le m\le 26}$}
\label{secns>26}

We begin with the recurrence (\ref{rec1}), and recall that, for $3\le
m\le 26$,
\begin{align*}
    \V(S_n)\sim C_S n,\quad \V(N_n)\sim C_N n^2 \quad
    \text{with} \quad C_N=\phi^2C_K;
\end{align*}
see \eqref{exp-nun} and \eqref{exp-etan}. There exists an $n_1\ge 1$,
such that for all $n\ge n_1$, the matrix $\Cov(Q_n)$ is positive
definite. We normalize it by $\widetilde{Q}_n:=Q_n$ for $0\le n<n_1$
and by
\begin{align*}
    \left(\widetilde{Q}_n\right)^{\mathrm{t}}
    :=\left[\begin{array}{cc}
    (\sqrt{C_N} n)^{-1} & 0\\ 0& (C_Sn)^{-1/2} \end{array}\right]
    \left(Q_n-\E[Q_n]\right)^{\mathrm{t}}, \qquad (n\ge n_1).
\end{align*}
Then, by (\ref{rec1}), $\widetilde{Q}_n$ satisfies the recurrence
\begin{align*}
    \left(\widetilde{Q}_n\right)^{\mathrm{t}}
    \stackrel{d}{=} \sum_{1\le r\le m} D^{(n)}_r
    \left(\widetilde{Q}^{(r)}_{I^{(n)}_r}\right)^{\mathrm{t}}
    + \widetilde{b}_n, \qquad (n\ge m-1),
\end{align*}
where (denoting by $F_{n,r}$ the event $F_{n,r}:=\{I^{(n)}_r\ge
n_1\}$ and $F_{n,r}^c$ its complement)
\begin{align} \label{coeml26}
    D^{(n)}_r
    &= \left[
    \begin{array}{cc} \displaystyle
	    \left(\frac{I^{(n)}_r}{n}\right)
        \mathbf{1}_{F_{n,r}} +
        \frac{\mathbf{1}_{F_{n,r}^c}}{\sqrt{C_N}\, n}
           & \displaystyle
           \frac{\sqrt{C_S I^{(n)}_r}}{\sqrt{C_N}\, n}
           \, \mathbf{1}_{F_{n,r}}
           + \frac{\mathbf{1}_{F_{n,r}^c}}{\sqrt{C_N} n}     \\
        0& \displaystyle
        \frac{\sqrt{I^{(n)}_r}}{\sqrt{n}}\,\mathbf{1}_{F_{n,r}}
            + \frac{\mathbf{1}_{F_{n,r}^c}}{\sigma_Y
        \sqrt{n}}
    \end{array}\right], \nonumber\\
    \widetilde{b}_n
    &=  \left(\!\!
    \begin{array}{c} \displaystyle
        \frac1{\sqrt{C_N}\, n}\left(\sum_{1\le r\le m}
            (\nu (I^{(n)}_r)+\mu(I^{(n)}_r)) -\nu(n)\right)\\
        \displaystyle
        \frac1{C_sn}\left(1-\mu(n)+\sum_{1\le r\le m}
            \nu (I^{(n)}_r)\right)
    \end{array}\!\!\right),
\end{align}
with assumptions on independence and identical distributions as in
(\ref{rec1}). Note that the asymptotic expressions for the variances
and covariance between $N_n$ and $S_n$ imply that
\begin{align*}
    \Cov(\widetilde{Q}_n)
    = \mathrm{Id_2} + o(1) ,
\end{align*}
where $\mathrm{Id_2}$ denotes the $2\times 2$ identity matrix and the
$o(1)$-term means that all four components of $\Cov(\widetilde{Q}_n)$
converge to the corresponding components of $\mathrm{Id_2}$, each
$o(1)$ in the four components being different in general. In
particular, $\Cov(\widetilde{Q}_n)$ is a symmetric, positive definite
matrix for all $n\ge n_1$. Let $R_n:=\mathrm{Id}_2$ for $0\le n <
n_1$ and $R_n :=(\Cov(\widetilde{Q}_n))^{1/2}$ for $n\ge n_1$. Note
that, by continuity, we have
\begin{align}\label{conRn}
    R_n
    = \mathrm{Id_2} + o(1), \quad R_n^{-1}=\mathrm{Id_2} + o(1).
\end{align}
Now normalize $\widetilde{Q}_n$ by $Y_n := R_n^{-1}\widetilde{Q}_n$,
for $n\ge1$, so that $\Cov(Y_n)=\mathrm{Id_2}$ for $n\ge n_1$, and
\begin{align}\label{yn_norm}
    \left(Y_n\right)^{\mathrm{t}}
    \stackrel{d}{=} \sum_{1\le r\le m} F^{(n)}_r
    \left(Y^{(n)}_{I^{(n)}_r}\right)^{\mathrm{t}}
    + b^{(n)}, \qquad (n\ge n_1),
\end{align}
where $F^{(n)}_r
    = R_n^{-1} D^{(n)}_r R_{I^{(n)}_r}$ and $
    b^{(n)}
    = R_n^{-1} \widetilde{b}_n,
$
with assumptions on independence and identical distributions as in
(\ref{rec1}). From (\ref{coeml26}), (\ref{conRn}) and
(\ref{conv_in}), we then obtain the convergences
\begin{align}\label{coeffyn}
    F^{(n)}_r
    \to \left[
    \begin{array}{cc}
        V_r & 0\\ 0&  V_r^{1/2}
    \end{array}\right]=:F^*_r, \quad b^{(n)}
    \to  \left(\!\!
    \begin{array}{c}
        C_N^{-1/2} b_N\\ 0
    \end{array}\!\!\right)=:b_N^*,
\end{align}
which hold in $L_p$ for any $1\le p<\infty$ (we will need $p=3$
below).

\paragraph{The limiting map.} To describe the asymptotic behavior of
$Q_n$, we use the following probability distribution on the space
$\R^2$. In accordance with the notation in \cite{neininger04}, we
denote by $\mathcal{M}^{2}$ the space of all probability
distributions on $\R^2$, by $\mathcal{M}^{2}_3$ the subspace of all
$\mathcal{L}(Z)\in \mathcal{M}^{2}$ with $\|Z\|_3<\infty$, and
furthermore
\begin{align*}
    \mathcal{M}^{2}_3(0,\mathrm{Id_2})
    :=\Big\{\mathcal{L}(Z)\in \mathcal{M}^{2}_3\,\Big|\,
    \E[Z]=0, \Cov(Z)= \mathrm{Id_2}\Big\}.
\end{align*}
Define the map $T_N'$ on $\mathcal{M}^{2}$:
\begin{align}
    T'_N: \mathcal{M}^{2}
    &\to \mathcal{M}^{2}, \label{def_map_t'}\\
    \mathcal{L}(Z)
    &\mapsto \mathcal{L}\left( \sum_{1\le r\le m}
        F^*_r Z^{(r)}+ b_N^* \right),\nonumber
\end{align}
where $Z^{(1)},\ldots, Z^{(m)}$, $(F^*_1,\ldots,F^*_m,b_N^*)$ are
independent and $Z^{(r)}$ is distributed as $Z$ for all
$r=1,\ldots,m$. Here $F^*_r$ and $b_N^*$ are defined in
(\ref{coeffyn}).

\begin{lem}\label{fpm23} The restriction of $T_N'$ in
(\ref{def_map_t'}) to $\mathcal{M}^{2}_3(0,\mathrm{Id_2})$ has a
unique fixed point $\mathcal{L}(X',\Lambda')$ which is a product
measure, i.e., its components $X'$ and $\Lambda'$ are independent.
\end{lem}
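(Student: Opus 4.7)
The plan is to apply the contraction method: show that $T_N'$ is a strict contraction on $\mathcal{M}^{2}_3(0,\mathrm{Id_2})$ with respect to a suitable minimal metric, deduce existence and uniqueness of a fixed point $\mathcal{L}(X',\Lambda')$, and then exploit the block-diagonal form of the matrices $F_r^*$ to identify this fixed point as a product measure.

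First I would verify that $T_N'$ sends $\mathcal{M}^{2}_3(0,\mathrm{Id_2})$ into itself. For the mean, a direct calculation of $\E[V_r\log V_r]$ using the $\mathrm{Beta}(1,m-1)$ density $(m-1)(1-x)^{m-2}$ of $V_1$ gives $\sum_{r}\E[V_r\log V_r]=1-H_m=-1/(2\phi)$, so $\E[b_N]=\phi+2\phi^2(1-H_m)=0$ and hence $\E[T_N'Z]=0$. For the covariance, since cross terms between distinct $Z^{(r)},Z^{(s)}$ vanish by independence and mean-zero,
\[
    \Cov(T_N'Z) = \sum_{1\le r\le m}\E\bigl[F_r^*(F_r^*)^{\mathrm{t}}\bigr]+\Cov(b_N^*) = \begin{pmatrix}\tfrac{2}{m+1}+C_N^{-1}\V(b_N) & 0\\ 0 & 1\end{pmatrix};
\]
this equals $\mathrm{Id_2}$ precisely when $C_N=(m+1)\V(b_N)/(m-1)$, a relation consistent with $C_N=\phi^2 C_K$ from Theorem~\ref{rn_coas} via the Dirichlet integrals in \eqref{diri-int2}--\eqref{diri-int3}. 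Finiteness of third moments is preserved because $b_N$ is bounded.

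For contraction, since $\|F_r^*\|_{\mathrm{op}}=V_r^{1/2}$, the $\ell_2$-constant $\sum_r\E[\|F_r^*\|_{\mathrm{op}}^2]=\sum_r\E[V_r]=1$ is critical and \emph{not} strict; so I would work in the minimal $L_3$-metric $\ell_3$ (or, equivalently for this purpose, the Zolotarev $\zeta_3$), where $V_r^{3/2}<V_r$ almost surely and hence
\[
    \sum_{1\le r\le m}\E\bigl[\|F_r^*\|_{\mathrm{op}}^3\bigr]=\sum_{1\le r\le m}\E[V_r^{3/2}]<1.
\]
The standard contraction estimate (cf.\ \cite[Lemma~3.1]{neininger01}) then yields a strict contraction, and Banach's fixed point theorem provides a unique $\mathcal{L}(X',\Lambda')\in\mathcal{M}^{2}_3(0,\mathrm{Id_2})$.

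To conclude independence of $X'$ and $\Lambda'$, the diagonal form of $F_r^*$ means the coordinates of $T_N'Z$ separate: writing $Z=(Z_1,Z_2)$, the first component of $T_N'Z$ depends only on the $Z_1^{(r)}$'s and the second only on the $Z_2^{(r)}$'s. Define the scalar maps $T_1\colon\mathcal{L}(U)\mapsto\mathcal{L}(\sum_r V_r U^{(r)}+C_N^{-1/2}b_N)$ and $T_2\colon\mathcal{L}(W)\mapsto\mathcal{L}(\sum_r V_r^{1/2}W^{(r)})$; by the same estimate each is a strict $\ell_3$-contraction on its mean-zero, variance-one scalar subspace, with unique fixed points $X^*$ and $\Lambda^*$. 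The key observation is that the standard normal distribution solves $T_2$: conditionally on $V$, $\sum_r V_r^{1/2}\Lambda^{*(r)}$ is Gaussian with variance $\sum_r V_r=1$, hence unconditionally $\mathscr{N}(0,1)$. Now take $(X^*,\Lambda^*)$ to be \emph{independent} with these marginals: applying $T_N'$ and conditioning on $V$, the second output coordinate is standard normal for every realisation of $V$, so it is independent of $V$ and of the first output coordinate. Thus the product $\mathcal{L}(X^*)\otimes\mathcal{L}(\Lambda^*)$ is itself a fixed point of $T_N'$ in $\mathcal{M}^{2}_3(0,\mathrm{Id_2})$, and by uniqueness it coincides with $\mathcal{L}(X',\Lambda')$. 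The main obstacle is the critical failure $\sum_r\E[\|F_r^*\|_{\mathrm{op}}^2]=1$, which forces the move from $\ell_2$ to $\ell_3$; once past that, it is the stability of the normal law under the weighted sum $\sum_r V_r^{1/2}\Lambda^{*(r)}$ that produces independence of the two components of the fixed point.
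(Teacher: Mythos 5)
Your proposal is correct in substance and shares the paper's essential ingredients: stability of $\mathcal{M}^{2}_3(0,\mathrm{Id_2})$ under $T_N'$ (your mean and covariance computations match \eqref{cov-id}; the relation $2/(m+1)+C_N^{-1}\V(b_N)=1$ that you only call ``consistent'' is in fact exactly the fixed-point equation determining $C_N=\phi^2C_K$, so it does hold), a contraction of order three based on $m\E[V_1^{3/2}]<1$, and---crucially---the observation that, conditionally on $V$, the second output coordinate $\sum_r V_r^{1/2}\Lambda^{(r)}$ is standard normal for \emph{every} realisation of $V$, which is precisely the computation in \eqref{rn0116}. Where you differ is in how the product structure is extracted. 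The paper works with the subspace $\mathcal{G}=\{\nu_1\otimes\mathcal{N}(0,1)\}$, proves it is $\zeta_3$-closed (weak limits of product measures are product measures, \cite{bill99}) and invariant under $T_N'$, applies Banach's theorem on $\mathcal{G}$, and then identifies the resulting fixed point with $\mathcal{L}(X',\Lambda')$ by uniqueness in the larger space. You instead exploit the coordinate decoupling to solve two scalar fixed-point equations (giving $X^*$ for $T_1$ and $\mathcal{N}(0,1)$ for $T_2$ by normal stability), verify by the same conditioning argument that the \emph{independent} coupling $\mathcal{L}(X^*)\otimes\mathcal{N}(0,1)$ is a bivariate fixed point, and invoke uniqueness. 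This trades the closedness argument for $\mathcal{G}$ against a routine scalar contraction for $T_1$; it is a slightly more direct and economical route, resting on the same key insight.

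One technical point should be repaired: the minimal $L_3$-metric is not ``equivalent for this purpose'' to $\zeta_3$. The metric $\ell_3$ is homogeneous of order one, so the standard estimate (Minkowski after conditioning on $V$) yields a Lipschitz factor of the form $\sum_{1\le r\le m}\bigl(\E\bigl[\|F_r^*\|_{\mathrm{op}}^3\bigr]\bigr)^{1/3}=m\bigl(\E[V_1^{3/2}]\bigr)^{1/3}$, which exceeds $1$ for all $m\ge 3$; the factor $\sum_r\E\bigl[\|F_r^*\|_{\mathrm{op}}^3\bigr]=m\E[V_1^{3/2}]<1$ that you use is the one delivered by an ideal metric of order $3$, i.e.\ by $\zeta_3$ (Lemma 3.3 and Theorem 4.1 in \cite{neininger04}, or Theorem 5.1 in \cite{drmota08}, together with completeness of $(\mathcal{M}^{2}_3(0,\mathrm{Id_2}),\zeta_3)$); the reference \cite{neininger01} you cite is the $\ell_2$ machinery used in Lemma~\ref{fplem1} and does not cover this critical case. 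Since you mention $\zeta_3$ anyway, simply drop the $\ell_3$ claim and run both the bivariate and the two scalar contraction arguments in $\zeta_3$; with that adjustment your argument goes through.
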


\begin{proof}
We check first that the restriction of $T_N'$ to
$\mathcal{M}^{2}_3(0,\mathrm{Id_2})$ maps into
$\mathcal{M}^{2}_3(0,\mathrm{Id_2})$:
\begin{itemize}
	
\item For any $\mu \in \mathcal{M}^{2}_3(0,\mathrm{Id_2})$, we see, by
independence and $\|b_N\|_3<\infty$, that $T_N'(\mu)\in
\mathcal{M}^{2}_3$.

\item For the mean of $T_N'(\mu)$, we have, from $\E[b_N]=0$, that
$T_N'(\mu)$ is centered.

\item For the covariance of $T_N'(\mu)$, we obtain (see also
\cite[Lemma 3.2]{neininger04}) the matrix
\begin{align} \label{cov-id}
    \E \left[
    \begin{array}{cc}
        b_N^2/C_N & 0\\
        0& 0 \end{array}
    \right] + m \E \left[
    \begin{array}{cc}
        V_1^2 & 0\\
        0& V_1 \end{array}
    \right]
    = \mathrm{Id_2}.
\end{align}
\end{itemize}
Thus $T_N'(\mu)\in\mathcal{M}^{2}_3(0,\mathrm{Id_2})$. By Lemma 3.3
in \cite{neininger04}, the existence of a unique fixed point
$\mathcal{L}(X',\Lambda')$ follows from the inequality
\begin{align*}
    m \E \left\|F_1^*\right\|_{\mathrm{op}}^3 = m\E[V_1^{3/2}]<1.
\end{align*}
Alternatively, Theorem 5.1 in \cite{drmota08} (or Lemma 3.1 in
\cite{neininger04} as well) implies the existence of a unique fixed
point $\mathcal{L}(X',\Lambda')$ in
$\mathcal{M}^{2}_3(0,\mathrm{Id_2})$.

To show that $\mathcal{L}(X',\Lambda')$ is a product measure we
recall that the existence of the unique fixed point that we just
obtained is based on the fact that the restriction of $T'_N$ to
$\mathcal{M}_3^2(0,\mathrm{Id}_2)$ is a contraction with respect to a
complete metric on $\mathcal{M}_3^2(0,\mathrm{Id}_2)$. We do not
introduce this metric, the Zolotarev metric $\zeta_3$, here, since we
do not require the special description of $\zeta_3$. For more
information on $\zeta_3$, in particular the completeness of the
metric space $(\mathcal{M}^{2}_3(0,\mathrm{Id_2}),\zeta_3)$, see
\cite{drmota08}.

We denote the space of probability measures on $\Rset$ by
$\mathcal{M}$ and
\begin{align*}
    \mathcal{M}_3(0,1)
    :=\Big\{\mathcal{L}(Z)\in \mathcal{M}\,\Big|\, \E[|Z|^3]<\infty,
    \E[Z]=0, \V(Z)= 1\Big\}.
\end{align*}
Furthermore, the product of probability measures $\nu_1$ and $\nu_2$
on $\Rset$ by $\nu_1\otimes \nu_2$. Consider the space
\begin{align*}
    \mathcal{G}:=\{ \nu_1 \otimes \mathcal{N}(0,1)\,|\,
    \nu_1 \in \mathcal{M}_3(0,1)\}.
\end{align*}
Then $\mathcal{G}\subset \mathcal{M}^{2}_3(0,\mathrm{Id_2})$.

To show that $(\mathcal{G},\zeta_3)$ is a closed subspace of
$(\mathcal{M}^{2}_3(0,\mathrm{Id_2}),\zeta_3)$, let $(\mu_n\otimes
\mathcal{N}(0,1))_{n\ge 1}$ be a sequence in $\mathcal{G}$ that
converges in $(\mathcal{M}^{2}_3(0,\mathrm{Id_2}),\zeta_3)$, say to
$\mathcal{L}(Y_1,Y_2)$. Since $\zeta_3$-convergence implies weak
convergence, we first obtain that $Y_2$ is standard normally
distributed. Clearly, we have $\mathcal{L}(Y_1)\in
\mathcal{M}_3(0,1)$. Since a weak limit of product measures is a
product measure (see e.g.~\cite[Theorem 2.8(ii)]{bill99}),
$\mathcal{L}(Y_1,Y_2)$ is a product measure. Now
$(\mathcal{G},\zeta_3)$ as a closed subspace of the complete space
$(\mathcal{M}^{2}_3(0,\mathrm{Id_2}),\zeta_3)$ is complete.

We next show that the restriction of $T'_N$ to $\mathcal{G}$ maps to
$\mathcal{G}$. Note that only here do we use the fact that the second
component in the definition of $\mathcal{G}$ is a normal
distribution; see (\ref{rn0116}) below. For $\mu=\mu_1\otimes
\mathcal{N}(0,1) \in \mathcal{G}$, the covariance matrix of
$T'_N(\mu) =: \mathcal{L}(Y_1,Y_2)$ is $\mathrm{Id_2}$ by
\eqref{cov-id}. Since $Y_2$ is distributed as $\sum_{1\le r\le m}
V_r^{1/2}N_r$, where the $N_j$'s are independent normals and
independent of $(V_1,\ldots,V_m)$, we see that $\mathcal{L}(Y_2)
=\mathcal{N}(0,1)$. Thus it remains to show that, for
$T'_N(\mu)\in\mathcal{G}$, the components $Y_1$ and $Y_2$ are
independent. Let $A,B\subset \Rset$ be measurable and
$(Y^{(1)}_1,Y^{(1)}_2), \ldots, (Y^{(m)}_1,Y^{(m)}_2)$ be independent
random vectors that are independent of $(V_1,\ldots,V_m)$ and
identically distributed as $\mu$. Then, denoting the distribution of
$V=(V_1,\ldots,V_m)$ by $\Upsilon$ and, for $v=(v_1,\ldots,v_m)$,
writing $t_N(v):=C_N^{-1/2}(\varphi_m+2\varphi_m^2 \sum_{1\le r\le m}
v_r\log v_r)$, we have
\begin{align}
    \Prob(Y_1\in A, Y_2\in B)
    &=\Prob\left( \sum_{1\le r\le m}
    V_rY^{(1)}_r+t_N(V)\in A,
    \sum_{1\le r\le m} V_r^{1/2}Y^{(2)}_r \in B\right) \nonumber\\
    &=\int \Prob\left( \sum_{1\le r\le m}
    v_rY^{(1)}_r+t_N(v)\in A, \sum_{1\le r\le m}
    v_r^{1/2}Y^{(2)}_r \in B\right) \mathrm{d}\Upsilon(v) \nonumber\\
    &=\int \Prob\left( \sum_{1\le r\le m} v_rY^{(1)}_r
    +t_N(v)\in A\right)\Prob\left(\sum_{1\le r\le m}
    v_r^{1/2}Y^{(2)}_r \in B\right) \mathrm{d}\Upsilon(v) \nonumber\\
    &=\int \Prob\left( \sum_{1\le r\le m} v_rY^{(1)}_r
    +t_N(v)\in A\right)\mathcal{N}(0,1)(B) \mathrm{d}\Upsilon(v)
	\label{rn0116}\\
    &=\Prob(Y_1\in A)\Prob(Y_2\in B).\nonumber
\end{align}
We then deduce that $T'_N(\mu)\in\mathcal{G}$ and $T'_N$ maps
$\mathcal{G}$ to $\mathcal{G}$.

Finally, Banach's fixed point theorem implies that the restriction of
$T'_N$ to $\mathcal{G}$ has a unique fixed point. Since
$\mathcal{G}\subset \mathcal{M}^{2}_3(0,\mathrm{Id_2})$, we find
$\mathcal{L}(X',\Lambda')\in \mathcal{G}$. Consequently, $X'$ and
$\Lambda'$ are independent.
\end{proof}

\paragraph{Proof of Theorem~\ref{thm13}: NPL.} The proof of
Theorem~\ref{thm13} relies on Theorem 4.1 in \cite{neininger04}. The
parameter $d$ there is taken to be the dimension $d=2$ here, and we
choose the parameter $s=3$. Note that the normalization in
(\ref{thmnc}) is as required in \cite[eq.~(22)]{neininger04} and is
identical to the normalization leading to the $Y_n$ in
(\ref{yn_norm}). We need to check the conditions (24)--(26) in
\cite{neininger04}. Condition (24) in our case is, with $F^{(n)}_r$
and $b^{(n)}$ as in (\ref{coeffyn}),
\begin{align*}
    (F^{(n)}_1,\ldots,F^{(n)}_m, b^{(n)})
    \to (F^*_1,\ldots,F^*_m, b_N^*)
\end{align*}
in $L_3$. This is satisfied by (\ref{coeffyn}). Condition (25) in
our case is also satisfied because
\begin{align*}
    \sum_{1\le r\le m} \|F_r^*\|_{\mathrm{op}}^3
    =m \E[V_1^{3/2}]<1.
\end{align*}
Finally, condition (25) is, for all $r=1,\ldots,m$ and all $\ell \in
\N$,
\begin{align*}
    \E\left[\mathbf{1}_{\{I^{(n)}_r\le \ell\}
        \cup \{I^{(n)}_r=n\}}\|F^{(n)}_r\|_{\mathrm{op}}^3\right]
    \to 0 .
\end{align*}
Since $\|F^{(n)}_r\|_{\mathrm{op}}$ are uniformly bounded random
variables, this condition is equivalent to
\begin{align*}
    \Prob\left(I^{(n)}_r\le \ell\right)
    \to 0,
\end{align*}
which is satisfied in view of (\ref{conv_in}). Hence, Theorem 4.1 in
\cite{neininger04} applies and implies the convergence
$\Cov(Q_n)^{-1/2}(Q_n-\E[Q_n]) \to (X',\Lambda')$ in the metric
$\zeta_3$, which implies the stated convergence in distribution.

Note that the components of $T_N'$ imply univariate recursive
distributional equations for $\mathcal{L}(\Lambda')$ and
$\mathcal{L}(X')$:
\begin{align*}
    \Lambda'
    &\stackrel{d}{=} \sum_{1\le r\le m}
    \sqrt{V_r} \Lambda'^{(r)},\\
    X'
    &\stackrel{d}{=} \sum_{1\le r\le m}
    V_r X'^{(r)} + C_N^{-1/2} b_N,
\end{align*}
with conditions on independence and identical distributions
corresponding to the definition of $T_N'$. Moreover, both equations
are subject to the constraints of zero mean, unit variance and
bounded third absolute moment. The solution for
$\mathcal{L}(\Lambda')$ is given by the standard normal distribution,
and a comparison of the equation for $\mathcal{L}(X')$ with
(\ref{def_map_t}) shows that $X'$ is identically distributed as
$C_N^{-1/2} X$ with $X$ as in Theorem \ref{thm12}.

\subsection{Limit law for NPL}\label{univariate}

From the previous two subsections, we see that the limit law of
$(N_n-\E(N_n))/n$ is the unique solution, subject to zero mean and
finite variance, of the recursive distributional equation
\[
    X
    \stackrel{d}{=} \sum_{1\le r\le m} V_r X^{(r)} + \phi
        +2\phi^2\sum_{1\le r\le m} V_r\log V_r,
\]
where $X^{(1)},\ldots,X^{(m)},V$ are independent and the $X^{(r)}$
have the same distribution as $X$.

Moreover, it is well-known (see Corollary 5.2 in \cite{neininger99})
that the limit law of $(K_n-\E(K_n))/n$, which we denote by
$\mathcal{L}(K)$ in Section \ref{known-results}, is the unique
solution, again subject to zero mean and finite variance, of
\begin{align}\label{rn1212}
    X
    \stackrel{d}{=} \sum_{1\le r\le m} V_r X^{(r)} + 1
        +2\phi\sum_{1\le r\le m} V_r\log V_r,
\end{align}
where the meaning of the notations is as above.

Comparing these two distributional recurrences, we see that the
solution to the first one is $\mathcal{L}(\phi K)$. Thus, we have
\[
    \frac{N_n-\E(N_n)}{n}\stackrel{d}{\longrightarrow}\phi K,
\]
i.e., the limit law of $K_n$ and $N_n$ are up to a constant
identical. In fact, if one is only interested in this result, then
one does not need the analysis in the last two subsections but there
are simpler approaches, as we discussed below.

\subsection{Short proofs for the limit law of $N_n$}
\label{app-univariate}

In this section, we discuss different means of proving directly the
limit law for NPL without the detour via the bivariate setting from
Sections \ref{secns<=26} and \ref{secns>26}.

\paragraph{Limit law for NPL by the contraction method.}

A first alternative approach to the limit law for NPL uses the
contraction method and ``over-normalizing'' in recurrence
(\ref{rec1}). More precisely, normalize with an $\alpha<\alpha'<1$ by
\begin{align*}
    \mathcal{R}_n:=\left[
    \begin{array}{cc} n{-1} & 0\\ 0& n^{-\alpha'}\end{array}\right]
    \left(\!\!\begin{array}{c} N_n-\E[N_n]
    \\ S_n-\E[S_n]\end{array}\!\!\right), \qquad (n\ge 1).
\end{align*}
Now the recurrence (\ref{rec1}) leads to the limit equation
\begin{align}\label{rn_nerec}
    \left(\mathcal{R}\right)^{\mathrm{t}}
    \stackrel{d}{=}\sum_{1\le r\le m} \left[
    \begin{array}{cc}
        V_r & 0\\ 0& V_r^{\alpha'}
    \end{array}\right]
    \left(\mathcal{R}^{(r)}\right)^{\mathrm{t}}
    + \left(\!\!
    \begin{array}{c}
        b_N \\ 0
    \end{array}\!\!\right),
\end{align}
with conditions on independence and identical distributions as in
(\ref{def_map_t}). Theorem 4.1 in \cite{neininger01} directly applies
and implies that $\mathcal{R}_n \to \mathcal{R}$ in distribution and
with second (mixed) moments, where $\mathcal{R}$ is the unique fixed
point subject to zero mean and finite second moment of the recursive
distributional equation (\ref{rn_nerec}). By substituting into
(\ref{rn_nerec}), we see that $(\phi K,0)$ has the distribution of
$\mathcal{R}$, which implies that
\begin{align*}
    \frac{N_n-\E[N_n]}{n}
    \stackrel{d}{\longrightarrow} \phi K.
\end{align*}

\paragraph{Univariate limit law for NPL via Slutsky's theorem.}
Another approach is to apply Slutsky's theorem. For that purpose,
we consider the moment generating function
\[
    \bar{P}_n(u,v,w)
    =\E\left(e^{\bar{S}_n u
        +\bar{\SK}_n v+\bar{\SN}_n w}\right).
\]
Then $\bar{P}_n$ satisfies the recurrence
\[
    \bar{P}_n(u,v,w)
    =\frac{1}{\binom{n}{m-1}}
        \sum_{\mathbf{j}}\bar{P}_{j_1}(u+w,v,w)
        \cdots\bar{P}_{j_l}(u+w,v,w)e^{\Delta_{\mathbf{j}} u
        +\nabla_{\mathbf{j}} v+\delta_{\mathbf{j}} w},
\]
with the initial conditions $P_n(u,v,w)=1$ for $0\le n\le m-2$.
Now define
\[
    V_n^{[KN]}
    :=\Cov(\SK_n,\SN_n).
\]
Then
\[
    V_n^{[KN]}
    =m\sum_{0\le j\le n-m+1}\pi_{n,j}V_j^{[KN]}+b_n^{[KN]},
\]
where
\[
    b_n^{[KN]}
    =\frac{1}{\binom{n}{m-1}}\sum_{\mathbf{j}}
        \left(V_j^{[SK]}+\nabla_{\mathbf{j}}\delta_{\mathbf{j}}\right)
    =V_n^{[SK]}+\frac{1}{\binom{n}{m-1}}\sum_{\mathbf{j}}
        \left(\nabla_{\mathbf{j}}\delta_{\mathbf{j}}
        -\Delta_{\mathbf{j}}\nabla_{\mathbf{j}}\right).
\]
Observe that Lemma \ref{est-dn} and Lemma \ref{est-dn}, together with
the asymptotics of $V_n^{[SK]}$, imply that
\[
    b_n^{[KN]}
    \sim\frac{1}{\binom{n}{m-1}}
        \sum_{\mathbf{j}}\nabla_{\mathbf{j}}\delta_{\mathbf{j}}
    \sim\frac{\phi}{\binom{n}{m-1}}
        \sum_{\mathbf{j}}\nabla_{\mathbf{j}}^2
    \sim\phi b_n^{[\SK]}.
\]
Consequently, by the same method of proofs used in
Section~\ref{sec_spl}, we see that
\[
    V_n^{[KN]}
    \sim\phi C_\SK n^2.
\]
Now consider the difference
\begin{align*}
    \E(\phi\bar{\SK}_n-\bar{\SN}_n)^2
    &=\phi^2V_n^{[K]}-2\phi V_n^{[KN]}+V^{[N]}_n\\
    &\sim\phi^2C_{\SK}n^2-2\phi^2C_{\SK}n^2
        +\phi^2C_{\SK}n^2\\
    &=o(n^2).
\end{align*}
Consequently, by Chebyshev's inequality, we obtain the convergence in
probability
\[
    \frac{\phi\bar{\SK}_n-\bar{\SN}_n}{n}
    \stackrel{\Prob}\longrightarrow 0.
\]
From this, the claimed result follows from Slutsky's theorem and the
limit law for KPL.

Note that this argument in addition gives the following consequence.
\begin{cor}\label{cor_asycorrel} The correlation coefficient between
$K_n$ and $N_n$ tends asymptotically to one
\[
    \rho(\SK_n,\SN_n)\to 1.
\]
\end{cor}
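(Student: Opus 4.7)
The plan is to read off the corollary as a direct arithmetic consequence of the second-moment estimates already assembled in this subsection. Writing the Pearson correlation as
\[
    \rho(K_n, N_n)
    = \frac{V_n^{[KN]}}{\sqrt{V_n^{[K]}\, V_n^{[N]}}},
\]
I plan to substitute the three asymptotics that have just been established or quoted: $V_n^{[K]} \sim C_K n^2$ (see \eqref{VKn}), $V_n^{[N]} \sim \phi^2 C_K n^2$ (from Theorem~\ref{rn_coas}), and $V_n^{[KN]} \sim \phi C_K n^2$ (derived a few lines above via asymptotic transfer applied to $b_n^{[KN]}$). The three constants then collapse to a single factor:
\[
    \rho(K_n,N_n)
    \sim \frac{\phi C_K n^2}{\sqrt{C_K n^2 \cdot \phi^2 C_K n^2}}
    = \frac{\phi C_K n^2}{\phi C_K n^2}
    =1,
\]
provided $C_K>0$, which is guaranteed by \eqref{VKn} (one checks that the factor $(m+1)H_m^{(2)}-2 - (m-1)\pi^2/6$ remains strictly positive for $m\ge 2$, so the denominator never vanishes and the division is legitimate for all sufficiently large $n$).

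An equivalent route, closer in spirit to the Slutsky argument used in Section~\ref{app-univariate}, is to combine the $L^2$-estimate
\[
    \|\phi \bar{K}_n - \bar{N}_n\|_2^2
    = \phi^2 V_n^{[K]} - 2\phi V_n^{[KN]} + V_n^{[N]}
    = o(n^2)
\]
(already displayed just above the corollary) with the fact that $\|\bar K_n\|_2 \sim \sqrt{C_K}\,n$ and $\|\bar N_n\|_2 \sim \phi\sqrt{C_K}\,n$. Expanding
\[
    \E[\bar K_n \bar N_n]
    = \phi\,\E[\bar K_n^2] + \E[\bar K_n(\bar N_n - \phi \bar K_n)]
\]
and applying Cauchy--Schwarz to the remainder term yields $V_n^{[KN]} = \phi V_n^{[K]} + o(n^2)$, and the claim follows as above.

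There is no real obstacle: both routes are two or three lines once the variance and covariance estimates of Sections~\ref{sec_spl} and \ref{app-univariate} are in hand. The only point that deserves a brief verification is the positivity of $C_K$, which ensures that $\rho(K_n,N_n)$ is well-defined for large $n$ and that the quotient of asymptotics may legitimately be taken componentwise.
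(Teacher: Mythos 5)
Your proposal is correct and follows essentially the same route as the paper: the corollary is obtained there precisely by combining the covariance asymptotics $V_n^{[KN]}\sim\phi C_K n^2$ (derived via asymptotic transfer just before the corollary) with $V_n^{[K]}\sim C_K n^2$ and $V_n^{[N]}\sim\phi^2 C_K n^2$, so the ratio defining $\rho(K_n,N_n)$ tends to $1$. Your alternative route through $\|\phi\bar K_n-\bar N_n\|_2^2=o(n^2)$ is likewise already part of the paper's surrounding Slutsky argument, so nothing new is needed beyond the positivity of $C_K$, which you correctly note.
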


\paragraph{Identical limit random variables.} To the pair
$(N_n,K_n)$, we could as well apply the contraction method, and prove
that the normalization $(N_n-\E(N_n))/n, (K_n-\E(K_n))/n)$ converges
to a limit given by
\begin{align*}
    \left(\mathcal{P}\right)^{\mathrm{t}}
    \stackrel{d}{=}\sum_{1\le r\le m}\left[
    \begin{array}{cc}
        V_r & 0\\
        0& V_r
    \end{array}\right]
    \left(\mathcal{P}^{(r)}\right)^{\mathrm{t}} +\left(\!\!
    \begin{array}{c}
        \phi b_K \\
        b_K
    \end{array}\!\!\right),
\end{align*}
with conditions on independence and identical distributions as in
(\ref{def_map_t}) and subject to zero mean and finite second moment.
By plugging in, we find that $(\phi K, K)$ has the limit
distribution. This re-derives Corollary \ref{cor_asycorrel} and shows
that the limit random variables (up to scaling) are even almost
surely identical. It seems reasonable to conjecture that the sequences
\begin{align*}
    \left(\frac{N_n-\E[N_n]}{\phi n}\right)_{n\ge 1},
    \qquad \left(\frac{K_n-\E[K_n]}{n}\right)_{n\ge 1}
\end{align*}
both convergence almost surely to the same random variable with the
distribution of $K$. This requires the $m$-ary search trees to grow
as a combinatorial Markov chain, which canonically is obtained by
building up the tree from i.i.d.~uniformly on $[0,1]$ distributed
data. For the notion of a combinatorial Markov chain and related
results on binary search trees, see Gr\"ubel \cite{grubel14}.

\section{Extensions}
\label{sec_ext}

The dependence and phase changes we established above for space
requirement and path lengths in random $m$-ary search trees are not
confined to these shape parameters, neither are they specific to
$m$-ary search trees. The same study (including the same methods of
proof) can be carried out for other shape parameters and other
classes of random trees. We consider first random median-of-$(2t+1)$
search trees in this section, where we discuss the joint asymptotics
of size (defined as the number of nodes with at least $2t$
descendants) and total key path length (which is also the major cost
measure for Quicksort using the median-of-$(2t+1)$ technique). Random
quadtrees will be also briefly discussed. Then we consider another
line of extension, namely, to other shape parameters in these trees.
Since the technicalities follow more or less the same pattern, we
skip all proofs.

\subsection{Random fringe-balanced binary search trees}

Fringe-balanced binary search trees (FBBSTs) are binary search trees
($m=2$) with local re-organizations for all subtrees of size exactly
$2t+1$ into more balanced ones. In terms of quicksort, the
corresponding tree structures choose at each partitioning stage the
median of a sample of $2t+1$ elements to partition the elements into
smaller and larger groups. For a precise description and other
connections, see \cite{chern01,devroye93}. The number of nodes
$\mathcal{S}_n$ with at least $2t$ descendants (or the number of
median-partitioning stages) and the total path length of these nodes
(TPL; KPL$=$NPL for binary search trees) $\mathcal{X}_n$ of a random
FBBST constructed from a random permutation of $n$ elements satisfy
the following distributional recurrence ($\mathcal{Q}_n
:=(\mathcal{X}_n,\mathcal{S}_n)$)
\begin{align*}
    \left(\mathcal{Q}_n\right)^{\mathrm{t}}
    \stackrel{d}{=}
        \left(\mathcal{Q}^{(1)}_{I'_n}\right)^{\mathrm{t}} +
        \left(\mathcal{Q}^{(2)}_{n-1-I'_n}\right)^{\mathrm{t}}
        +\left(\!\!
        \begin{array}{c}
            n-1 \\ 1
        \end{array}\!\!\right), \qquad(n\ge 2t+1),
\end{align*}
with conditions on independence and identical distributions as in
(\ref{rec1}) and the initial conditions $\mathcal{S}_0=\cdots={\cal
S}_{2t}= \mathcal{X}_0=\cdots=\mathcal{X}_{2t}=0$. Here
\[
    \Prob(I'_n=j)
    =\frac{\binom{j-1}{t}\binom{n-j}{t}}{\binom{n}{2t+1}}
    \qquad(t\le j\le n-1-t).
\]

We start with the mean. First, for $\mathcal{S}_n$, it was proved in
\cite{chern01} that
\begin{align}
    \E(\mathcal{S}_n)
    =C_1(n+1)-1
        +\sum_{2\le k\le 3}\frac{C_k}{\Gamma(\varrho_k)}\,
        n^{\varrho_k-1}+o(n^{\alpha_t -1})
    \label{def_omega0}
\end{align}
where
\[
    C_k
    =\frac{t!}{2(\varrho_k-1)\varrho_k
        \cdots(\varrho_k+t-1)\sum_{t\le j\le 2t}
        \frac1{j+\varrho_k}} \qquad(k=1,\dots,t+1),
\]
with $\varrho_1=2>\Re(\varrho_2)=\Re(\varrho_3)=\alpha_t
>\Re(\varrho_4)\ge \cdots\ge\Re(\varrho_{t+1})$ being the zeros of
the indicial equation
\[
    (z+t)\cdots (z+2t)-\frac{2(2t+1)!}{t!}.
\]
In particular,
\[
    C_1 = \phi_t := \frac1{2(t+1)(H_{2t+2}-H_{t+1})}.
\]

Moreover, using the transfer theorems from \cite{chern01}, we obtain,
for the mean of $\mathcal{X}_n$,
\[
    \E(\mathcal{X}_n)
    =\frac{1}{H_{2t+2}-H_{t+1}}\,n\log n + c_t n +o(n),
\]
for some constant $c_t$. The same method of proofs (asymptotic
transfer and the approach used in Section~\ref{sec_corrmary}) also
leads to asymptotic estimates for the variances and the covariance
between $\mathcal{X}_n$ and $\mathcal{S}_n$.

\begin{thm} The variance of the number of non-leaf nodes
$\mathcal{S}_n$ and that of the TPL $\mathcal{X}_n$ in a random
FBBST, and their covariance satisfy
\begin{align*}
    \V(\mathcal{S}_n)
    &\sim
    \begin{cases}
        D_Sn,&\text{if}\ 1\le t\le 58;\\
        G_1(\beta_t\log n)n^{2\alpha_t -2},&\text{if}\ t\ge 59,
    \end{cases}\\
    \Cov(\mathcal{S}_n,\mathcal{X}_n)
    &\sim
    \begin{cases}
        D_Rn,&\text{if}\ 1\le t\le 28;\\
        G_2\left(\beta_t\log n\right)n^{\alpha_t },
            &\text{if}\ t\ge 29,
    \end{cases}\\
    \V(\mathcal{X}_n)
    &\sim D_Xn^2,
\end{align*}
where $D_S,D_R$ are suitable constants, $\beta_t=\Im(\varrho_2)$,
and all other constants and functions are given below.
\end{thm}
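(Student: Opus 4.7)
The plan is to mirror the proof of Theorems~\ref{cov-snxn} and \ref{rn_coas}, but with the underlying binary-splitting recurrence whose split-ratio now converges to a $\mathrm{Beta}(t+1,t+1)$ distribution instead of a spacing $V_r$ from $m-1$ uniforms. First I would write distributional recurrences for $\bar{\mathcal{S}}_n:=\mathcal{S}_n-\E[\mathcal{S}_n]$ and $\bar{\mathcal{X}}_n:=\mathcal{X}_n-\E[\mathcal{X}_n]$ via the joint moment generating function $\bar{P}_n(u,v)=\E\exp(\bar{\mathcal{S}}_nu+\bar{\mathcal{X}}_nv)$, exactly as in \eqref{joint-snxn}, with toll
\[
    \Delta'_\mathbf{j}
    =1-\E[\mathcal{S}_n]+\E[\mathcal{S}_{j_1}]+\E[\mathcal{S}_{j_2}],
    \quad
    \nabla'_\mathbf{j}
    =(n-1)-\E[\mathcal{X}_n]+\E[\mathcal{X}_{j_1}]+\E[\mathcal{X}_{j_2}].
\]
Differentiating at $(0,0)$ yields recurrences of the same type as \eqref{und-rec} but with the FBBST splitting weights $\Prob(I'_n=j)$; the FBBST analogue of Proposition~\ref{asymp-trans} (available in \cite{chern01}) then provides the asymptotic transfer from the toll sequence to the target variance or covariance.

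Next I would derive the uniform expansions paralleling Lemma~\ref{est-dn}. Using \eqref{def_omega0} and the analogous expansion $\E[\mathcal{X}_n]=2\phi_t n\log n+c_tn+o(n)$, we get uniformly in $\mathbf{j}$ with $j_r/n\to x_r$ (and $x_1+x_2=1$)
\[
    \Delta'_\mathbf{j}
    =\sum_{2\le k\le 3}\frac{C_k}{\Gamma(\varrho_k)}\,n^{\varrho_k-1}
        \Bigl(-1+\sum_{r=1}^{2}(j_r/n)^{\varrho_k-1}\Bigr)+o(n^{\alpha_t-1}),
\]
and
\[
    \nabla'_\mathbf{j}
    =n\Bigl(1+2\phi_t\sum_{r=1}^{2}(j_r/n)\log(j_r/n)\Bigr)+o(n).
\]
The corresponding Dirichlet-type integrals reduce to one-dimensional Beta integrals against the limiting density $\frac{(2t+1)!}{t!\,t!}x^t(1-x)^t$ on $[0,1]$: the $\mathcal{S}_n$-toll requires $\int_0^1 x^t(1-x)^t x^{\varrho_{k_1}+\varrho_{k_2}-2}\,dx$ and its cross-variants, while the $\mathcal{S}_n,\mathcal{X}_n$-toll needs $\int_0^1 x^t(1-x)^t (x^{\varrho_k-1}-\frac12)x\log x\,dx$ type pieces, all computable in closed form via Beta functions and a single $\partial_v$ of the Beta function.

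For $\V(\mathcal{X}_n)$ the toll is $b_n^{[X]}\sim c n^2$ by \eqref{diri-int3} specialized to the Beta$(t+1,t+1)$ measure, producing $D_X n^2$ with an explicit $D_X$. The phase thresholds are then read off from where $b_n$ overtakes linear behavior: the $\mathcal{S}_n$-toll is of order $n^{2\alpha_t-2}$, which exceeds $n$ precisely when $2\alpha_t-2>1$; checking Table~\ref{tab-alpha}'s FBBST analogue gives the cutoff $t=58$; for the covariance the toll is $n^{\alpha_t}$, which beats linear once $\alpha_t>1$, giving cutoff $t=28$. Applying part (i) or (ii) of the asymptotic transfer accordingly produces the stated asymptotics, and the periodic functions $G_1,G_2$ are obtained as the $\beta_t$-oscillating parts arising from the complex conjugate pair $\varrho_2,\varrho_3$, structurally identical to \eqref{F1z} and \eqref{F2z} with $\lambda_k,A_k,\phi,m$ replaced by $\varrho_k,C_k,\phi_t,2t+1$.

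The main obstacle will not be the algebra of the transfer itself but verifying the numerical location of the phase thresholds $t=28$ and $t=58$, i.e., checking that $\alpha_t>1$ first occurs at $t=29$ and $\alpha_t>3/2$ first occurs at $t=59$. This requires tabulating the roots of the indicial equation $(z+t)\cdots(z+2t)=2(2t+1)!/t!$ with enough precision, analogously to Table~\ref{tab-alpha}; once these thresholds are confirmed the rest of the argument is a parallel transcription of Section~\ref{sec_spl}, with the only genuinely new computation being the closed-form evaluation of the Beta-type integrals, which is routine by differentiating the Beta function with respect to its parameters.
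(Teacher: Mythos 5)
Your proposal follows essentially the route the paper intends: the authors deliberately omit the proof of this theorem (``the technicalities follow more or less the same pattern, we skip all proofs''), pointing instead to the transfer theorems of \cite{chern01} and the computations of Section~\ref{sec_corrmary}, and your plan --- joint moment generating function, uniform expansions of the tolls $\Delta'_{\mathbf{j}},\nabla'_{\mathbf{j}}$ from \eqref{def_omega0} and the $n\log n$ expansion of $\E(\mathcal{X}_n)$, reduction of the toll asymptotics to Beta integrals against the limiting $\mathrm{Beta}(t+1,t+1)$ split density, and application of the FBBST analogue of Proposition~\ref{asymp-trans} with the phase thresholds read off from where $\alpha_t$ crosses $1$ (covariance) and $\tfrac32$ (variance) --- is exactly that transcription.

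One correction is needed in your input data: the coefficient of the $n\log n$ term of $\E(\mathcal{X}_n)$ is $\frac{1}{H_{2t+2}-H_{t+1}} = 2(t+1)\phi_t$, not $2\phi_t$ with $\phi_t$ as defined in the paper; equivalently, $-2\,\E[\mathcal{V}\log\mathcal{V}] = H_{2t+2}-H_{t+1}$ for $\mathcal{V}\sim\mathrm{Beta}(t+1,t+1)$, so the factor $t+1$ must also be carried into your expansion of $\nabla'_{\mathbf{j}}$ (for $t=0$ the two coincide, which is presumably the source of the slip). This does not affect the growth orders or the thresholds $t=28$ and $t=58$, since the coefficient merely rescales $\nabla'_{\mathbf{j}}$, but if carried through literally it would distort the explicit constant $D_X$ and the periodic function $G_2$, which the theorem asserts in the precise forms displayed after its statement (note these involve $H_{2t+2}-H_{t+1}$, matching the toll $b_M$ of the limiting map). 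With that coefficient fixed, and with the numerical verification you already flag (that $\alpha_t>1$ first occurs at $t=29$ and $\alpha_t>\tfrac32$ first at $t=59$, the latter being known from \cite{chern01}), your argument coincides with the proof the paper has in mind.
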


The periodic functions in the above theorem are given by
\begin{align*}
    G_1(z)
    &=2\frac{\vert C_2\vert^2}{\vert\Gamma(\varrho_2)\vert^2}
        \left(-1+\frac{2(2t+1)!\vert\Gamma(\varrho_2+t)\vert^2}
        {t!^2\Gamma(2\alpha_t +2t)-2t!(2t+1)!
        \Gamma(2\alpha_t +t-1)}\right)\\
    &\qquad\quad+2\Re\left(\frac{C_2^2e^{2i z}}
        {\Gamma(\varrho_2)^2}\left(-1+
        2\frac{2(2t+1)!\Gamma(\varrho_2+t)^2}
        {t!^2\Gamma(2\varrho_2+2t)
        -2t!(2t+1)!\Gamma(2\varrho_2+t-1)}\right)\right)
\end{align*}
and
\begin{align*}
    G_2(z)
    &= \Re\Bigg(\frac{C_2e^{i z}}
        {\Gamma(\varrho_2)}\Bigg(\frac{\varrho_2+2t+1}{t+1} \\
        &\quad -\frac{(\varrho_2+2t+1)\psi(\varrho_2+2t+2)
        -(\varrho_2+t)\psi(\varrho_2+t+1)-(t+1)(H_{t+1}-\gamma)}
        {(t+1)(H_{2t+2}-H_{t+1})}\Bigg)\Bigg),
\end{align*}
respectively. Moreover, we have
\begin{align*}
    D_X
    &=\frac{1}{(H_{2t+2}-H_{t+1})^2}
        \Bigg(\frac{2t+3}{t+1}\,H_{2t+2}^{(2)}
        -\frac{t+2}{t+1}\,H_{t+1}^{(2)}
        -\frac{\pi^2}{6}\Bigg).
\end{align*}
The limit law for the normalized TPL of random FBBSTs was first shown
in the dissertation of Bruhn, \cite{bruhn96}; see also
\cite{broutin12, chern01, munsonius11,rosler01}. The phase change of
the limit law of the normalized $\mathcal{S}_n$ was first discovered
in \cite{chern01}.

To describe the joint limiting behavior of $\mathcal{S}_n$ and
$\mathcal{X}_n$, we denote by $\mathcal{V}$ a random variable that is
the median of $(2t+1)$ independent, identically distributed uniform
$[0,1]$ random variables, i.e., a Beta$(t+1,t+1)$ distribution. We
define the map $T_{\mathrm{med}}$ by
\begin{align*}
    T_{\mathrm{med}}: \mathcal{M}^{\R \times \C}
    &\to \mathcal{M}^{\R \times \C},  \nonumber\\
    \mathcal{L}(Z,W)
    &\mapsto \mathcal{L}\left(\left[\!\!
    \begin{array}{cc}
        \mathcal{V}& 0\\
        0& \mathcal{V}^{\varrho_2}
    \end{array}\!\!\right]\left(\!\!
    \begin{array}{c}
        Z^{(1)} \\ W^{(1)}
    \end{array}\!\!\right) +\left[\!\!
    \begin{array}{cc}
        1-\mathcal{V} & 0\\
        0& (1-\mathcal{V})^{\varrho_2}
    \end{array}\!\!\right]\left(\!\!
    \begin{array}{c}
        Z^{(2)} \\ W^{(2)}
    \end{array}\!\!\right)+ \left(\!\!
    \begin{array}{c}
        b_M \\ 0
    \end{array}\!\!\right) \right),
\end{align*}
with conditions on independence and distributions as in
(\ref{def_map_t}) and
\begin{align*}
    b_M
    := 1+\frac{1}{H_{2t+2}-H_{t+1}}
        \left(\mathcal{V}\log \mathcal{V}
        + (1-\mathcal{V})\log(1-\mathcal{V})\right).
\end{align*}
Then Lemma \ref{fplem1} and its proof also apply to the map
$T_{\mathrm{med}}$ as long as $t\ge 59$. The normalization used is
given by
\begin{align}\label{def_calyn}
    \mathcal{Y}_n
    :=\left(\frac{\mathcal{X}_n-
        \E(\mathcal{X}_n)}{n}, \frac{\mathcal{S}_n-C_1n}
        {n^{\alpha_t-1}}\right),\qquad (n\ge 1).
\end{align}
We have the following asymptotic behavior for $t\ge 59$. Rewrite
\eqref{def_omega0} as
\begin{align}
    \E(\mathcal{S}_n)
    &=C_1(n+1)-1+ \Re(\vartheta n^{\varrho_2})
        +o(n^{\alpha_t -1}),
    \label{def_omega}
\end{align}
where $\vartheta := 2\Re(C_2/\Gamma(\varrho_2))$.

\begin{thm} Assume $t\ge 59$. Let $\mathcal{Y}_n$ be the
normalization of TPL and the number of non-leaf nodes in a random
FBBST defined in (\ref{def_calyn}). Denote by ${\cal
L}(X_{\mathrm{med}}, \Lambda_{\mathrm{med}})$ the unique fixed point
of the restriction of $T_{\mathrm{med}}$ to $\mathcal{M}^{\R \times
\C}_2((0,\vartheta))$ with $\vartheta$ defined in (\ref{def_omega}).
Then, denoting by $\beta_t:=\Im(\varrho_2)$, we have
\begin{align*}
    \ell_2\left(\mathcal{Y}_n,  (X_{\mathrm{med}},\Re(n^{i\beta_t}
    \Lambda_{\mathrm{med}}))\right) \to 0, \qquad (n\to\infty).
\end{align*}
\end{thm}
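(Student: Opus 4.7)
The plan is to adapt the proof of Theorem \ref{thm12} from Section \ref{secns<=26} by specializing from the $m$-fold multinomial split to the binary median split with proportions $(\mathcal{V}, 1-\mathcal{V})$, where $\mathcal{V}\sim \mathrm{Beta}(t+1,t+1)$. First, I would divide the bivariate recurrence for $\mathcal{Q}_n = (\mathcal{X}_n, \mathcal{S}_n)$ coordinatewise by the normalizing factors $n$ and $n^{\alpha_t - 1}$ and recenter by $(\E[\mathcal{X}_n], C_1 n)$, the second coordinate being centered by $C_1 n$ rather than $\E[\mathcal{S}_n]$ so that the oscillation $\Re(n^{i\beta_t}\vartheta)$ survives in the limit. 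This produces a distributional recurrence
\begin{align*}
    (\mathcal{Y}_n)^{\mathrm{t}}
    \stackrel{d}{=}
    A^{(n)}_1 (\mathcal{Y}^{(1)}_{I'_n})^{\mathrm{t}}
    + A^{(n)}_2 (\mathcal{Y}^{(2)}_{n-1-I'_n})^{\mathrm{t}}
    + b^{(n)},
\end{align*}
entirely analogous to \eqref{mod_rec}. Using the $L_p$-convergence $I'_n/n \to \mathcal{V}$, the mean expansion \eqref{def_omega0} for $\mathcal{S}_n$ and the known mean $\E(\mathcal{X}_n) = (H_{2t+2}-H_{t+1})^{-1} n\log n + c_t n + o(n)$, the matrices $A^{(n)}_r$ and the toll $b^{(n)}$ converge in $L_2$ to the coefficients and the toll $(b_M, 0)^{\mathrm{t}}$ appearing in $T_{\mathrm{med}}$.

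Second, take the unique fixed point $\mathcal{L}(X_{\mathrm{med}}, \Lambda_{\mathrm{med}})$ of $T_{\mathrm{med}}$ restricted to $\mathcal{M}^{\R \times \C}_2((0,\vartheta))$ provided by the cited analogue of Lemma \ref{fplem1}; the mean-preservation of this restriction rests on the identity $\E[\mathcal{V}^{\varrho_2 - 1}] = 1/2$, which follows from $\E[\mathcal{V}^{z-1}] = \frac{(2t+1)!}{t!} \cdot \frac{\Gamma(z+t)}{\Gamma(z+2t+1)}$ evaluated at the root $\varrho_2$ of the indicial equation $(z+t)\cdots(z+2t) = 2(2t+1)!/t!$. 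With this in hand, build, for each $j \geq 1$ and $r \in \{1,2\}$, an optimal $\ell_2$ coupling between $\mathcal{Y}^{(r)}_j$ and $(X^{(r)}_{\mathrm{med}}, \Re(j^{i\beta_t}\Lambda^{(r)}_{\mathrm{med}}))$, chosen independently across $r$ and independently of $(I'_n, \mathcal{V})$, exactly as done just after \eqref{mod_fix}.

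Third, write $\Delta(n) := \ell_2(\mathcal{Y}_n, (X_{\mathrm{med}}, \Re(n^{i\beta_t}\Lambda_{\mathrm{med}})))$ and expand with the triangle inequality into four pieces matching those of Section \ref{secns<=26}: (a) the recursive main term, which after conditioning on $I'_n$ and using coupling optimality together with $(I'_n/n)^2 \leq (I'_n/n)^{2(\alpha_t - 1)}$ (valid since $\alpha_t < 2$) is dominated by $\sum_{r=1}^{2}\E[\mathcal{V}_r^{2(\alpha_t - 1)}\Delta^2(I^{(n)}_r)]$, with $\mathcal{V}_1 := \mathcal{V}$ and $\mathcal{V}_2 := 1-\mathcal{V}$; (b) the coefficient mismatch, which tends to $0$ by the $L_2$-convergences of step one; (c) the toll mismatch $\|b^{(n)} - (b_M, 0)^{\mathrm{t}}\|_2 \to 0$; (d) the off-diagonal cross contribution, bounded using $\sup_{n \geq 1}\|\mathcal{Y}_{n,2}\|_2 < \infty$, which is guaranteed by the variance asymptotics $\V(\mathcal{S}_n) \sim G_1(\beta_t \log n) n^{2\alpha_t - 2}$ stated above. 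A standard inductive argument based on the strict contraction $2 \E[\mathcal{V}^{2(\alpha_t - 1)}] < 1$ (valid precisely when $t \geq 59$, in direct analogy with the threshold $m^2 B(m, 2\alpha - 1) < 1$ used for $m \geq 27$) then yields $\Delta(n) \to 0$.

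The main obstacle, mirroring the argument around \eqref{dec_wn}, lies in the cross-product expectations $\E[W^{(n)}_{r,2} W^{(n)}_{s,2}]$ with $r \neq s$ arising from the second coordinate: centering by the deterministic $C_1 n$ leaves the residual mean $R(n) := \E[\mathcal{Y}_{n,2}] - \Re(n^{i\beta_t}\vartheta) = o(1)$, which must be carried through these cross expectations and shown to vanish via dominated convergence. Once this point is handled, the remaining work is a mechanical specialization of the $m$-ary proof to the binary case with biased Beta-splitting.
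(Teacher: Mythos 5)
Your proposal is correct and follows essentially the same route the paper takes (and explicitly indicates): it is the specialization of the contraction-method proof of Theorem~\ref{thm12} in Section~\ref{secns<=26} to the binary Beta$(t+1,t+1)$ split, with your mean-preservation identity $\E[\mathcal{V}^{\varrho_2-1}]=\tfrac12$ and the contraction condition $2\,\E[\mathcal{V}^{2(\alpha_t-1)}]<1$ (equivalent to $\alpha_t>\tfrac32$, i.e.\ $t\ge 59$) playing exactly the roles of $\E[V_1^{\lambda_2-1}]=1/m$ and $m^2B(m,2\alpha-1)<1$ there. The paper skips this proof, saying only that Lemma~\ref{fplem1} and the earlier argument carry over, which is precisely what you carried out (and the exponent $\varrho_2-1$ you use is indeed the correct analogue in $T_{\mathrm{med}}$; note also that in the FBBST recurrence the coefficient matrices are diagonal, so the off-diagonal term you budget for is actually absent, which only simplifies matters).
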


For the range of $1\le t \le 58$, we define $b_{\mathrm{med}}^*
:=(D_X^{-1/2}b_M, 0)^{\mathrm{t}}$ and the map $T_{\mathrm{med}}'$ on
$\mathcal{M}^{2}$:
\begin{align*}
    T'_{\mathrm{med}}:\mathcal{M}^{2}
    &\to \mathcal{M}^{2}, \\
    \mathcal{L}(Z,W)
    &\mapsto \mathcal{L}\left(\left[\!\!
    \begin{array}{cc}
        \mathcal{V}& 0\\ 0& \mathcal{V}^{1/2}
    \end{array}\!\!\right]\left(\!\!
    \begin{array}{c}
        Z^{(1)} \\ W^{(1)}
    \end{array}\!\!\right) +\left[\!\!
    \begin{array}{cc}
        1-\mathcal{V} & 0\\
        0& (1-\mathcal{V})^{1/2}
    \end{array}\!\!\right]\left(\!\!
    \begin{array}{c}
        Z^{(2)} \\ W^{(2)}
    \end{array}\!\!\right)
    + b_{\mathrm{med}}^* \right),
\end{align*}
with conditions on independence and distributions as in
(\ref{def_map_t'}). Again Lemma \ref{fpm23} and its proof apply to
$T_{\mathrm{med}}'$ and imply that the restriction of
$T_{\mathrm{med}}'$ to $\mathcal{M}^{2}_3(0,\mathrm{Id_2})$ has a
unique fixed point $\mathcal{L}(X_{\mathrm{med}}',
\Lambda_{\mathrm{med}}')$.

Similar to the small $m$ case of $m$-ary search trees, the remaining
range $1\le t\le 58$ also leads to a convergence in distribution.

\begin{thm} Assume $1\le t\le 58$. Let $\mathcal{Q}_n
=(\mathcal{X}_n, \mathcal{S}_n)$ be the vector of TPL and the number
of non-leaf nodes in a random FBBST. With $\mathcal{L}
(X_{\mathrm{med}}', \Lambda_{\mathrm{med}}')$ as above, we have
\begin{align*}
    \Cov(\mathcal{Q}_n)^{-1/2}
	\left(\mathcal{Q}_n-\E[\mathcal{Q}_n]\right)
    \stackrel{d}{\longrightarrow}
    \mathcal{L}\left(X_{\mathrm{med}}',
	\Lambda_{\mathrm{med}}'\right),
\end{align*}
where $\Lambda_{\mathrm{med}}'$ is a standard normal distribution.
Moreover, $X_{\mathrm{med}}'$ and $\Lambda_{\mathrm{med}}'$ are
independent.
\end{thm}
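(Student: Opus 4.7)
The plan is to mimic the proof of Theorem \ref{thm13} (the small-$m$ case for $m$-ary search trees) from Section \ref{secns<=26}, adapted to the binary recurrence structure of FBBSTs. First I would normalize $\mathcal{Q}_n = (\mathcal{X}_n, \mathcal{S}_n)$ using the diagonal scaling suggested by the asymptotics $\V(\mathcal{S}_n)\sim D_S n$, $\V(\mathcal{X}_n)\sim D_X n^2$, $\Cov(\mathcal{S}_n,\mathcal{X}_n)\sim D_R n$ (all valid for $1\le t\le 58$ since $\alpha_t<3/2$). This yields a positive-definite covariance matrix for $n\ge n_1$, and an initial normalization $\widetilde{\mathcal{Q}}_n$ with $\Cov(\widetilde{\mathcal{Q}}_n)\to\mathrm{Id}_2$. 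A second normalization $\mathcal{Y}_n:=R_n^{-1}\widetilde{\mathcal{Q}}_n$ with $R_n:=\Cov(\widetilde{\mathcal{Q}}_n)^{1/2}$ then ensures $\Cov(\mathcal{Y}_n)=\mathrm{Id}_2$ exactly. The distributional recurrence for $\mathcal{Q}_n$ translates into
\begin{align*}
    (\mathcal{Y}_n)^{\mathrm{t}}
    \stackrel{d}{=} F^{(n)}_1 (\mathcal{Y}^{(1)}_{I'_n})^{\mathrm{t}}
    + F^{(n)}_2 (\mathcal{Y}^{(2)}_{n-1-I'_n})^{\mathrm{t}} + b^{(n)},
\end{align*}
with coefficient matrices and toll vector that converge in $L_3$ (using $I'_n/n\to \mathcal{V}$) to $(F_1^*,F_2^*,b^*_{\mathrm{med}})$ matching the map $T'_{\mathrm{med}}$.

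Second, I would apply Theorem 4.1 of Neininger--R\"uschendorf \cite{neininger01} with dimension $d=2$ and moment order $s=3$. The three hypotheses are verified as follows. The $L_3$-convergence of coefficients comes from dominated convergence applied to bounded functions of $I'_n/n\to\mathcal{V}$ together with the uniform $L_3$-bound on $\widetilde{b}_n$ (which follows from the asymptotics \eqref{def_omega0} of $\E(\mathcal{S}_n)$ and the $n\log n$ expansion of $\E(\mathcal{X}_n)$). The contraction condition reduces to $2\,\E[\mathcal{V}^{3/2}]<1$ for $\mathcal{V}\sim\mathrm{Beta}(t+1,t+1)$, which is easily checked (e.g.\ for $t=1$ it gives $16/21<1$, and the ratio decreases in $t$ by concentration of $\mathcal{V}$ around $1/2$). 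The degeneracy condition $\Prob(I'_n\le \ell)\to 0$ holds because $I'_n/n\to\mathcal{V}$ has an absolutely continuous limit on $(0,1)$. Hence $\mathcal{Y}_n$ converges in the Zolotarev metric $\zeta_3$, and thus in distribution, to the unique fixed point of $T'_{\mathrm{med}}$ in $\mathcal{M}^2_3(0,\mathrm{Id}_2)$.

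Finally, to show that the limit $\mathcal{L}(X'_{\mathrm{med}},\Lambda'_{\mathrm{med}})$ is a product measure whose second marginal is standard normal, I would reproduce the invariant-subspace argument of Lemma \ref{fpm23}: restrict $T'_{\mathrm{med}}$ to the closed subspace $\mathcal{G}=\{\nu\otimes\mathcal{N}(0,1):\nu\in\mathcal{M}_3(0,1)\}$ of $(\mathcal{M}^2_3(0,\mathrm{Id}_2),\zeta_3)$ and verify that $T'_{\mathrm{med}}$ maps $\mathcal{G}$ into $\mathcal{G}$. The normal invariance of the second component follows since $\sqrt{\mathcal{V}}\,\mathcal{N}_1+\sqrt{1-\mathcal{V}}\,\mathcal{N}_2\stackrel{d}{=}\mathcal{N}(0,1)$ when $\mathcal{N}_1,\mathcal{N}_2$ are independent standard normals independent of $\mathcal{V}$; the product structure is preserved through the identity \eqref{rn0116} conditioning on $\mathcal{V}$. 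Since $\mathcal{G}$ is complete, Banach's fixed point theorem pins the fixed point inside $\mathcal{G}$, giving independence of the components and the Gaussianity of $\Lambda'_{\mathrm{med}}$. The main technical obstacle is really the verification of the $L_3$ convergence of $\widetilde{b}_n$ uniformly in $n$ (which requires that the second component of the toll, involving $1-\mu(n)+\sum_r\nu(I'_r)$, be controlled by the remainder term in \eqref{def_omega0}), but this is routine given the precise mean expansions already established.
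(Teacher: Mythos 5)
Your proposal is correct and follows essentially the same route as the paper, which proves this case by observing that the argument for Theorem~\ref{thm13} and Lemma~\ref{fpm23} (two-step normalization to identity covariance, the general contraction theorem with $\zeta_3$ at moment order $s=3$, the contraction bound $2\,\E[\mathcal{V}^{3/2}]<1$, and the invariant subspace $\mathcal{G}=\{\nu\otimes\mathcal{N}(0,1)\}$ for the product structure) carries over verbatim to the binary FBBST recurrence. The only slip is bibliographic: the $\zeta_3$-based limit theorem with conditions (24)--(26) that you verify is Theorem 4.1 of \cite{neininger04}, not of \cite{neininger01} (the latter's Theorem 4.1 is the $L_2$/$\ell_2$ version used elsewhere in the paper), but this does not affect the argument.
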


\subsection{Random quadtrees}

Point quadtrees, first proposed by Finkel and Bentley
\cite{finkel74}, are one of the most natural extensions of binary
search trees to multivariate data in which each point splits the
$d$-dimensional space into $2^d$ subspaces, corresponding to $2^d$
subtrees in the corresponding tree structure. For a precise
definition of random $d$-dimensional quadtrees; see
\cite{chern07,mahmoud92}. Since the space requirement is a constant,
we discuss the number of leaves $L_n$ and the internal path length
$\Xi_n$ in this section. Note that for the pair $\mathcal{W}_n
:=(\Xi_n,L_n)$, we have, for all $n\ge 2$,
\begin{align*}
    \left(\mathcal{W}_n\right)^{\mathrm{t}}
    \stackrel{d}{=} \sum_{1\le r\le 2^d}
        \left(\mathcal{W}^{(r)}_{J_r}\right)^{\mathrm{t}}
        + \left(\!\!
        \begin{array}{c} n-1 \\ 0 \end{array}\!\!\right),
\end{align*}
with conditions on independence and identical distributions as in
(\ref{rec1}), where the initial conditions are $L_0=0,L_1=1,
\Xi_0=\Xi_1=0$. Moreover, the underlying splitting probabilities are
given by
\[
    \Prob(J_1=j_1,\ldots,J_{2^d}=j_{2^d})
    =\binom{n-1}{j_1,\ldots,j_{2^d}}
        \int_{[0,1]^d}q_1(\mathbf{x})^{j_1}\cdots
        q_{2^d}(\mathbf{x})^{j_{2^d}}\mathrm{d}\mathbf{x},
\]
where $j_1,\ldots,j_{2^d}\ge 0, j_1+\cdots+j_{2^d}=n-1,
\mathbf{x}=(x_1,\ldots,x_d)$ and
\[
    q_{h}(\mathbf{x})
    =\prod_{1\le l\le d}\left((1-b_l)x_l+b_l(1-x_l)\right),
\]
with $(b_1,\ldots,b_d)_{2}$ being the binary representation of
$h-1$.

First, it was proved in \cite{chern07} that the mean of $L_n$
satisfies, for $d\ge 2$,
\begin{align}
    \E(L_n)
    &=\chi_dn+c_{+}n^{\hat{\alpha}+i\hat{\beta}}
        +c_{-}n^{\hat{\alpha}-i\hat{\beta}}+\frac{\chi_d}{2^d-1}
        +o(n^{\hat{\alpha}}),
    \label{def_hatomega0}
\end{align}
where $\chi_d,c_{+},c_{-}$ (which is the conjugate of $c_+$) are
given in \cite{chern07}, and $2e^{2\pi i/d}= \hat{\alpha}
+1+i\hat{\beta}$. Moreover, the asymptotic transfer results in
\cite{chern07} also lead to the asymptotic approximation (see also
\cite{flajolet95})
\[
    \E(\Xi_n)
    =\frac{2}{d}\,n\log n +\hat{c}n +o(n),
\]
for some explicitly computable constant $\hat{c}$. In a similar
manner, we can characterize the asymptotics of the variances and the
covariance.

\begin{thm} For the number of leaves $L_n$ and the internal path
length $\Xi_n$ in random $d$-dimensional quadtrees, we have
\begin{align*}
    \V(L_n)
    &\sim \begin{cases}
        E_Ln,&\text{if}\ 1\le d\le 8;\\
        P_1\bigl(\hat\beta\log n\bigr)
		n^{2\hat{\alpha}},&\text{if}\ d\ge 9,
    \end{cases}\\
    \Cov(\Xi_n,L_n)
    &\sim \begin{cases}
        E_Rn,&\text{if}\ 1\le d\le 5;\\
        P_2\bigl(\hat\beta\log n\bigr)n^{\hat{\alpha}+1},
            &\text{if}\ d\ge 6,
    \end{cases}\\
    \V(\Xi_n)&\sim E_Xn^2,
\end{align*}
where $E_L,E_R$ are suitable constants, $\hat\beta:=
2\sin\frac{2\pi}d$, and all other constants and functions are given
below.
\end{thm}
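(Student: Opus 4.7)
\emph{Proof plan.} My plan is to follow the three-step template of Section~\ref{sec_spl} for random $m$-ary search trees, replacing the multinomial Dirichlet integrals of Lemma~\ref{Iab} by integrals over $[0,1]^d$ against the quadtree splitting densities $q_h(\mathbf{x})$. First, I would exploit the $2^d$ coordinatewise-reflection symmetries of $(J_1,\ldots,J_{2^d})$, which force all marginals $\Prob(J_r=j)$ to share a common value $\pi_n(j)$, so that the three second-order quantities
\[
    V_n^{[L]}:=\V(L_n),\quad V_n^{[\Xi L]}:=\Cov(\Xi_n,L_n),\quad V_n^{[\Xi]}:=\V(\Xi_n)
\]
each satisfy a recurrence of the form
\[
    a_n = 2^d\sum_{0\le j\le n-1}\pi_n(j)\,a_j + b_n,
\]
with respective toll functions $\E[(\Delta_{\mathbf{J}}^{[L]})^2]$, $\E[\Delta_{\mathbf{J}}^{[L]}\nabla_{\mathbf{J}}^{[\Xi]}]$, and $\E[(\nabla_{\mathbf{J}}^{[\Xi]})^2]$, where the mean discrepancies $\Delta_{\mathbf{j}}^{[L]}$ and $\nabla_{\mathbf{j}}^{[\Xi]}$ are formed exactly as in~\eqref{def-dn}.

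Second, I would insert the refined expansion~\eqref{def_hatomega0} of $\E(L_n)$ and the estimate $\E(\Xi_n)=(2/d)n\log n+\hat c\,n+o(n)$ to obtain, as a direct analogue of Lemma~\ref{est-dn}, the uniform expansions
\begin{align*}
    \Delta_{\mathbf{j}}^{[L]}
    &= c_{+}\,n^{\hat\alpha+i\hat\beta}\Bigl(-1+\sum_{1\le r\le 2^d}(j_r/n)^{\hat\alpha+i\hat\beta}\Bigr)
    + c_{-}\,n^{\hat\alpha-i\hat\beta}\Bigl(-1+\sum_{1\le r\le 2^d}(j_r/n)^{\hat\alpha-i\hat\beta}\Bigr)
    + o(n^{\hat\alpha}),\\
    \nabla_{\mathbf{j}}^{[\Xi]}
    &= n\Bigl(1+\tfrac{2}{d}\sum_{1\le r\le 2^d}(j_r/n)\log(j_r/n)\Bigr)+o(n),
\end{align*}
uniform in $\mathbf{j}$. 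The averages of the three tolls against the splitting distribution would then reduce, in the limit, to integrals against Lebesgue measure on $[0,1]^d$, since $J_r/n$ converges in all $L_p$ to a random variable whose density is $q_r(\mathbf{x})$.

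Third, I would evaluate the resulting integrals by means of the tensor identity
\[
    \sum_{1\le h\le 2^d}q_h(\mathbf{x})^u
    = \prod_{1\le l\le d}\bigl(x_l^u+(1-x_l)^u\bigr),
\]
which follows immediately from the product form of $q_h$. Together with its $u$-derivatives evaluated at $u=2$ (needed to accommodate the $x_l\log x_l$ factors coming from $\nabla_{\mathbf{j}}^{[\Xi]}$), this reduces every required integral to a product of one-dimensional Beta integrals. Applying the quadtree asymptotic transfer of~\cite{chern07} — the exact analogue of Proposition~\ref{asymp-trans} for the kernel $\pi_n(j)$ — term by term then yields the three stated estimates. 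The thresholds come from comparing the toll orders with $n$: for $\V(L_n)$ the toll is of order $n^{2\hat\alpha}$, producing the phase change at the value of $d$ where $2\hat\alpha$ crosses $1$ (numerically between $d=8$ and $d=9$); for $\Cov(\Xi_n,L_n)$ the toll is of order $n^{\hat\alpha+1}$, producing the phase change where $\hat\alpha$ crosses $0$ (between $d=5$ and $d=6$); for $\V(\Xi_n)$ the toll is always of order $n^2$, which is strictly super-linear, so no phase change can occur.

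The one genuinely delicate point I anticipate is the covariance computation: because $b_n^{[\Xi L]}$ mixes the oscillating factors $n^{\hat\alpha\pm i\hat\beta}$ carried by $\Delta^{[L]}$ with the non-polynomial term $\sum_r(j_r/n)\log(j_r/n)$ carried by $\nabla^{[\Xi]}$, the relevant integrals will be obtained by differentiating the tensorized identity in a single coordinate — producing a sum of $d$ cross-terms whose coefficients must then be assembled into a manifestly $2\pi$-periodic function of $\hat\beta\log n$. This is the exact analogue of the passage from $I(u,v)$ to $F_2$ in Section~\ref{sec_corrmary}, and everything else is routine bookkeeping following the pattern of Sections~\ref{sec_spl} and~\ref{tnpl}.
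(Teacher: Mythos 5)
Your plan is essentially the proof the paper has in mind: the paper explicitly skips the quadtree proofs, saying the technicalities follow the pattern of Sections~\ref{sec_spl}--\ref{tnpl} with the transfer theorems of \cite{chern07}, and your template (equal marginals of the $J_r$'s, uniform expansions of the mean discrepancies from \eqref{def_hatomega0} and $\E(\Xi_n)$, reduction of the tolls to integrals over $[0,1]^d$ of the quadrant volumes $q_h(\mathbf{x})$ evaluated via the coordinatewise factorization, then term-by-term transfer) is exactly that, and it reproduces the correct thresholds $2\hat\alpha>1$ (so $d\ge 9$) and $\hat\alpha\ge 0$ (so $d\ge 6$, where $\hat\alpha=0$ exactly at $d=6$) as well as the structure of $\eta(u,v)$, $c_L$, $c_K$ appearing in $P_1,P_2,E_X$. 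Only minor phrasing slips (the limit of $J_r/n$ is $q_r(U)$ with $U$ uniform on $[0,1]^d$, not a variable with density $q_r$; and at $d=6$ the oscillating $\Theta(n)$ toll must be fed to the complex-exponent transfer rather than the linear one) need tightening, but they do not affect the argument.
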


The periodic functions above are given by
\begin{align*}
    P_1(z) &=2\frac{(2\hat{\alpha}+1)^d}{(2\hat{\alpha}+1)^d-2^d}
    \vert c_{+}\vert^2c_L(\hat{\alpha}
    +i\hat{\beta},\hat{\alpha}-i\hat{\beta})\\
    &\qquad\qquad+2\Re\left(\frac{(2\hat{\alpha}+2i\hat{\beta}+1)^d}
    {(2\hat{\alpha}+2i\hat{\beta}+1)^d-2^d}
    c_{+}^2c_L(\hat{\alpha}+i\hat{\beta},\hat{\alpha}+i\hat{\beta})
    e^{2iz}\right),
\end{align*}
where $c_L(u,v)=1-\eta(0,u)-\eta(0,v)+2^d\eta(u,v)$ with
\[
    \eta(u,v)
    :=\left(\frac{1}{u+v+1}+\frac{\Gamma(u+1)\Gamma(v+1)}
        {\Gamma(u+v+2)}\right)^d
\]
and
\[
    P_2(z)
    =2\Re\left(\frac{(\hat{\alpha}+i\hat{\beta}+2)^d}
        {(\hat{\alpha}+i\hat{\beta}+2)^d-2^d}\,c_+
        c_{K}(\hat{\alpha}+i\hat{\beta})e^{iz}\right),
\]
where
\[
    c_K(u,v)
    =\eta(0,u)+\frac{2^{d+1}}{d}
        \frac{\partial}{\partial v}\eta(u,v)\Big\vert_{v=1}.
\]
Finally,
\[
    E_{X}
    =\frac{3^d}{3^d-2^d}\cdot\frac{21-2\pi^2}{9d}.
\]
The limit law for the normalized internal path length of random
$d$-dimensional quadtrees was first obtained in \cite{neininger99};
see also \cite{broutin12,chern07,munsonius11}. The asymptotic
behavior of the normalized number of leaves together with its phase
change was first discovered in \cite{chern07}; see also
\cite{dean02,janson04,janson06b,janson08} for closely related types
of phase changes.

We now describe the joint behavior of $\Xi_n$ and $L_n$. A random
variable $U$ uniformly distributed over the unit hypercube $[0,1]^d$
decomposes this cube into $2^d$ quadrants by drawing the $d$
hyperplanes through $U$ perpendicular to the edges of the cube.
Choose an ordering of these quadrants and denote their volumes by
$\langle U\rangle_1,\ldots, \langle U\rangle_{2^d}$; see
\cite[Section 2]{neininger99}. Now define the map $T_{\mathrm{quad}}$
by (with $\delta_2:=2e^{2\pi i/d}$)
\begin{align*}%\label{def_map_q}
    T_{\mathrm{quad}}: \mathcal{M}^{\R \times \C}
    &\to \mathcal{M}^{\R \times \C},  \\
    \mathcal{L}(Z,W)
    &\mapsto \mathcal{L}\left(\sum_{1\le r\le 2^d}
    \left[\!\!
    \begin{array}{cc}
        \langle U\rangle_r& 0\\
        0& \langle U\rangle_r^{\delta_2}
    \end{array}\!\!\right]\left(\!\!
    \begin{array}{c}
        Z^{(r)} \\
        W^{(r)}
    \end{array}\!\!\right)+ \left(\!\!
    \begin{array}{c}
        b_Q \\ 0
    \end{array}\!\!\right) \right),
\end{align*}
with conditions on independence and distributions as in
(\ref{def_map_t}), and
\begin{align*}
    b_Q
    := 1+\frac{2}{d}\sum_{1\le r\le 2^d}
        \langle U\rangle_r \log \langle U\rangle_r.
\end{align*}
Then Lemma \ref{fplem1} and its proof also apply to map
$T_{\mathrm{quad}}$ as long as $d\ge 9$. The normalization used is
given by
\begin{align}\label{def_calvn}
    \mathcal{V}_n
    :=\left(
        \frac{\Xi_n-\E(\Xi_n)}{n},
        \frac{L_n-\chi_dn}{n^{\hat{\alpha}}}
    \right) \qquad (n\ge 1).
\end{align}
Rewrite \eqref{def_hatomega0} as
\begin{align}
    \E(L_n)
    &=\chi_dn +\Re(\hat{\vartheta}n^{\hat{\alpha}+i\hat{\beta}})+
        \frac{\chi_d}{2^d-1}+o(n^{\hat{\alpha}}),
    \label{def_hatomega}
\end{align}
where $\hat{\vartheta} = 2c_+$.

\begin{thm} Assume $d\ge 9$. Let $\mathcal{V}_n$ denote the
normalization of the internal path length and the number of leaves in
a random $d$-dimensional quadtree defined in (\ref{def_calvn}).
Denote by $\mathcal{L}(X_{\mathrm{quad}}, \Lambda_{\mathrm{quad}})$
the unique fixed point of the restriction of $T_{\mathrm{quad}}$ to
$\mathcal{M}^{\R \times \C}_2((0,\hat{\vartheta}))$ with
$\hat{\vartheta}$ defined in (\ref{def_hatomega}). Then we have
\begin{align*}
    \ell_2\left(
        \mathcal{V}_n,
        \bigl(X_{\mathrm{quad}},
        \Re\bigl(n^{i\hat{\beta}}
		\Lambda_{\mathrm{quad}}\bigr)\bigr)
    \right)
    \to 0.
\end{align*}
\end{thm}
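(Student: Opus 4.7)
The plan is to mirror the proof of Theorem~\ref{thm12} for $m\ge 27$, replacing the uniform spacings $(V_1,\dots,V_m)$ by the quadrant volumes $(\langle U\rangle_1,\dots,\langle U\rangle_{2^d})$ and $\lambda_2$ by $\delta_2=2e^{2\pi i/d}=\hat\alpha+1+i\hat\beta$. Starting from the distributional recurrence for $\mathcal{W}_n=(\Xi_n,L_n)$ and inserting the refined mean expansion~\eqref{def_hatomega} (the analogue of~\eqref{exp_mu}), I would centre and rescale to obtain
\begin{equation*}
\left(\mathcal{V}_n\right)^{\mathrm{t}}\stackrel{d}{=}\sum_{1\le r\le 2^d}A_r^{(n)}\left(\mathcal{V}_{J_r}^{(r)}\right)^{\mathrm{t}}+b^{(n)},\qquad A_r^{(n)}=\mathrm{diag}\bigl(J_r/n,(J_r/n)^{\hat\alpha}\bigr),
\end{equation*}
with $b^{(n)}\to(b_Q,0)^{\mathrm{t}}$ in $L_2$. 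The off-diagonal entry present in the $m$-ary case is absent here because the recurrence for $\mathcal{W}_n$ couples $\Xi$ and $L$ only through the shared split (the identity matrix), not through the toll function. The needed input is the $L_p$-convergence $J_r/n\to\langle U\rangle_r$ for every $1\le p<\infty$, which is standard since conditionally on $U$ the vector $(J_1,\dots,J_{2^d})$ is multinomial with parameters $(\langle U\rangle_1,\dots,\langle U\rangle_{2^d})$ (an exact analogue of~\eqref{lim_ij}).

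Next I would introduce the three matrix families $\widetilde A_r^{(n)}$, $B_r^{(n)}$ and $C_r^{(n)}$ in complete analogy with those in the proof of Theorem~\ref{thm12} (with $\delta_2,\hat\beta$ replacing $\lambda_2,\beta$), and optimally $\ell_2$-couple each $\mathcal{V}_j^{(r)}$ to $(X_{\mathrm{quad}}^{(r)},\Re(j^{i\hat\beta}\Lambda_{\mathrm{quad}}^{(r)}))$ independently in $r$ and independent of $(U,J_1,\dots,J_{2^d})$. Setting
\begin{equation*}
\Delta(n):=\ell_2\bigl(\mathcal{V}_n,(X_{\mathrm{quad}},\Re(n^{i\hat\beta}\Lambda_{\mathrm{quad}}))\bigr),
\end{equation*}
a triangle-inequality split of $\Delta(n)$ decomposes it into three pieces: (i) the dominant self-similar term comparing $\widetilde A_r^{(n)}\mathcal{V}_{J_r}^{(r)}$ against $\Re\bigl(C_r^{(n)}(X_{\mathrm{quad}}^{(r)},\Lambda_{\mathrm{quad}}^{(r)})^{\mathrm{t}}\bigr)$; (ii) the phase-matching piece involving $C_r^{(n)}-B_r^{(n)}$, which vanishes by $L_2$-convergence of $J_r/n$ together with $\sup_n\|(L_n-\chi_d n)/n^{\hat\alpha}\|_2<\infty$ (which follows from the variance asymptotics in~\cite{chern07} for $d\ge 9$); and (iii) the toll discrepancy $\|b^{(n)}-(b_Q,0)^{\mathrm{t}}\|_2\to 0$. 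Squaring~(i), conditioning on $(J_1,\dots,J_{2^d})$, using $(J_r/n)^2\le(J_r/n)^{2\hat\alpha}$ (valid since $\hat\alpha=2\cos(2\pi/d)-1<1$), and exploiting the optimality of the couplings and independence to kill the cross-terms (exactly as in the treatment of the display before~\eqref{est_delta}), produces the key recursive bound
\begin{equation*}
\Delta^2(n)\le \E\!\left[\sum_{1\le r\le 2^d}(J_r/n)^{2\hat\alpha}\Delta^2(J_r)\right]+o(1).
\end{equation*}

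The main obstacle, and the precise point where the hypothesis $d\ge 9$ is used, is verifying the strict contraction
\begin{equation*}
\sum_{1\le r\le 2^d}\E\bigl[\langle U\rangle_r^{2\hat\alpha}\bigr]<1
\end{equation*}
needed to close the iteration. By the full permutation symmetry of the $2^d$ quadrants and the fact that each $\langle U\rangle_r$ is distributionally the product of $d$ independent Uniform$[0,1]$ variables, one has $\E\bigl[\langle U\rangle_r^{2\hat\alpha}\bigr]=(2\hat\alpha+1)^{-d}$, reducing the condition to $(2\hat\alpha+1)^d>2^d$, i.e.\ $\hat\alpha>\tfrac12$. Since $\hat\alpha=2\cos(2\pi/d)-1$, this holds precisely when $d\ge 9$, matching the threshold at which $\V(L_n)$ switches from linear to $n^{2\hat\alpha}$. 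With strict contraction in hand, $\Delta(n)\to 0$ follows from the standard iteration argument invoked after~\eqref{est_delta} (essentially Theorem~4.1 of~\cite{neininger01}). The existence and uniqueness of $\mathcal{L}(X_{\mathrm{quad}},\Lambda_{\mathrm{quad}})$ that make the right-hand side of the theorem well-defined are guaranteed by the excerpt's remark that Lemma~\ref{fplem1} and its proof apply verbatim to $T_{\mathrm{quad}}$ whenever $d\ge 9$.
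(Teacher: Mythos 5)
Your proposal is correct and is essentially the paper's own argument: the paper skips the proof, saying only that it follows the same pattern as Theorem~\ref{thm12}, and your write-up is exactly that adaptation, with the one genuinely new ingredient --- the contraction bound $\sum_{1\le r\le 2^d}\E\bigl[\langle U\rangle_r^{2\hat{\alpha}}\bigr]=\bigl(2/(2\hat{\alpha}+1)\bigr)^d<1$, equivalent to $\hat{\alpha}>\tfrac12$, i.e.\ $d\ge 9$ --- computed correctly from the product-of-independent-uniforms representation of the quadrant volumes, and with the correct observation that no off-diagonal coefficient appears since the toll in the quadtree recurrence does not involve the leaves. One minor remark: your matrices use the exponent $\hat{\alpha}+i\hat{\beta}=\delta_2-1$, which is the mean-preserving (and clearly intended) reading of the map $T_{\mathrm{quad}}$, whose displayed exponent $\delta_2$ in the paper appears to be off by one.
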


For the remaining range of $1\le d \le 8$, we define
$b_{\mathrm{quad}}^* :=(E_X^{-1/2}b_M,0)^{\mathrm{t}}$ and the map
$T_{\mathrm{quad}}'$ on $\mathcal{M}^{2}$
\begin{align*}
    T'_{\mathrm{quad}}: \mathcal{M}^{2}
    &\to \mathcal{M}^{2}, \\
    \mathcal{L}(Z,W)
    &\mapsto \mathcal{L}\left(\sum_{1\le r\le 2^d}\left[\!\!
    \begin{array}{cc}
        \langle U\rangle_r& 0\\
        0& \langle U\rangle_r^{1/2}
    \end{array}\!\!\right] \left(\!\!
    \begin{array}{c}
        Z^{(r)} \\ W^{(r)}
    \end{array}\!\!\right) + b_{\mathrm{quad}}^*\right),
\end{align*}
with conditions on independence and distributions as in
(\ref{def_map_t'}). Similarly, Lemma \ref{fpm23} and its proof again
apply to $T_{\mathrm{quad}}'$ and imply that the restriction of
$T_{\mathrm{quad}}'$ to $\mathcal{M}^{2}_3(0,\mathrm{Id_2})$ has a
unique fixed point ${\cal L} (X_{\mathrm{quad}}',
\Lambda_{\mathrm{quad}}')$.

\begin{thm}\label{thm13m} Assume $1\le d\le 8$. Let
$\mathcal{V}_n=(\Xi_n,L_n)$ denote the vector of internal path length
and the number of leaves in a random $d$-dimensional quadtree. With
${\cal L}(X_{\mathrm{quad}}',\Lambda_{\mathrm{quad}}')$ as above, we
have
\begin{align*}
    \Cov(\mathcal{V}_n)^{-1/2}(\mathcal{V}_n-\E[\mathcal{V}_n])
    \stackrel{d}{\longrightarrow}
    \mathcal{L}\bigl(X_{\mathrm{quad}}',
	\Lambda_{\mathrm{quad}}'\bigr),
\end{align*}%
where $\Lambda_{\mathrm{quad}}'$ is a standard normal distribution,
and $X_{\mathrm{quad}}'$, and $\Lambda_{\mathrm{quad}}'$ are
independent.
\end{thm}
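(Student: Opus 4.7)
My plan is to follow the blueprint established in Section~\ref{secns>26} for the analogous convergent case of $m$-ary search trees (Theorem~\ref{thm13}), replacing the splitting vector $(V_1,\ldots,V_m)$ by the quadrant volumes $(\langle U\rangle_1,\ldots,\langle U\rangle_{2^d})$ and the matrices $F_r^*$ by the diagonal factors featured in $T'_{\mathrm{quad}}$. The input I would take for granted is the asymptotic behaviour of the expectations, variances and the covariance $\Cov(\Xi_n,L_n)\sim E_R n$ stated in the preceding theorem of Section~5.2 (valid since $1\le d\le 8$ forces $\hat{\alpha}<1/2$), together with the general metric contraction framework of Neininger--R\"uschendorf.

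\textbf{Step 1 (Normalization and recurrence).} Since for $1\le d\le 8$ the covariance matrix $\Cov(\mathcal{V}_n)$ is positive definite for all sufficiently large $n$ and converges, after scaling each coordinate by its standard deviation ($\sqrt{E_X}n$ for $\Xi_n$ and $\sqrt{E_L n}$ for $L_n$), to a limiting correlation matrix that is asymptotically the identity (because $\rho(\Xi_n,L_n)\to 0$ in this range), I set $\widetilde{\mathcal{V}}_n := \Cov(\mathcal{V}_n)^{-1/2}(\mathcal{V}_n-\E\mathcal{V}_n)$ and deduce, as in \eqref{yn_norm}, a recurrence
\[
    \widetilde{\mathcal{V}}_n^{\,\mathrm{t}}
    \stackrel{d}{=}\sum_{1\le r\le 2^d} G_r^{(n)}\,\bigl(\widetilde{\mathcal{V}}_{J_r}^{(r)}\bigr)^{\mathrm{t}}
    + c^{(n)},
\]
where $G_r^{(n)}$ and $c^{(n)}$ are built from the ratio $J_r/n$ and the toll $n-1$, and the classical convergence $J_r/n\to \langle U\rangle_r$ in $L_p$ for every $p<\infty$ gives the $L_3$-limit $(G_r^{(n)},c^{(n)})\to (F_r^{\mathrm{quad},*}, b_{\mathrm{quad}}^*)$ with $F_r^{\mathrm{quad},*}=\mathrm{diag}(\langle U\rangle_r,\langle U\rangle_r^{1/2})$.

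\textbf{Step 2 (Contraction and convergence).} I would now invoke Theorem~4.1 of \cite{neininger04} with dimension $d=2$ and $s=3$. The three hypotheses to check are exactly the ones verified at the end of Section~\ref{secns>26}: the $L_3$-convergence of coefficients (Step~1), the contraction condition $\sum_r \E\|F_r^{\mathrm{quad},*}\|_{\mathrm{op}}^3 = 2^d\,\E\langle U\rangle_1^{3/2}<1$ for $1\le d\le 8$ (this is the only place where the parameter range enters decisively, and since $\langle U\rangle_1$ is a product of $d$ independent $\mathrm{Unif}[0,1]\cdot(1-\mathrm{Unif}[0,1])$ or similar factors one checks $2^d\,\E\langle U\rangle_1^{3/2}=(2/\text{something})^d$ which stays below $1$ in this range; this numeric check is the analogue of $m\le 26$), and the degeneracy condition $\Prob(J_r\le\ell)+\Prob(J_r=n)\to 0$ which is immediate from $J_r/n\to\langle U\rangle_r$. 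This yields $\zeta_3$-convergence of $\widetilde{\mathcal{V}}_n$ to the unique fixed point of $T'_{\mathrm{quad}}$ in $\mathcal{M}_3^2(0,\mathrm{Id}_2)$, which in particular implies convergence in distribution.

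\textbf{Step 3 (Independence of the two marginals).} To show that $X'_{\mathrm{quad}}$ and $\Lambda'_{\mathrm{quad}}$ are independent with $\Lambda'_{\mathrm{quad}}\sim\mathcal{N}(0,1)$, I would repeat verbatim the closed-subspace argument from Lemma~\ref{fpm23}: define $\mathcal{G}:=\{\nu\otimes\mathcal{N}(0,1):\nu\in\mathcal{M}_3(0,1)\}$, observe that $\mathcal{G}$ is $\zeta_3$-closed inside $\mathcal{M}_3^2(0,\mathrm{Id}_2)$ (weak limits of product measures are product measures), and verify that $T'_{\mathrm{quad}}$ maps $\mathcal{G}$ into itself. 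The second coordinate of $T'_{\mathrm{quad}}(\nu\otimes\mathcal{N}(0,1))$ is $\sum_r \langle U\rangle_r^{1/2}N_r$, a conditionally Gaussian mixture whose unconditional law is $\mathcal{N}(0,1)$ because $\sum_r\langle U\rangle_r=1$ almost surely; the independence from the first coordinate is then obtained by conditioning on $U$, exactly as in the chain of equalities culminating in \eqref{rn0116}. Banach's fixed point theorem applied inside this closed subspace pins down $\mathcal{L}(X'_{\mathrm{quad}},\Lambda'_{\mathrm{quad}})\in\mathcal{G}$.

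\textbf{Anticipated obstacle.} The only genuinely new calculation, and thus the main obstacle, is the explicit verification of the contraction inequality $2^d\,\E\langle U\rangle_1^{3/2}<1$ for all $1\le d\le 8$: since $\langle U\rangle_1$ is a $d$-fold product of independent factors distributed as products of uniforms, one reduces this to $(2\,\E[(UV)^{3/2}])^d<1$ with $U,V\sim\mathrm{Unif}[0,1]$ independent, or equivalently to checking that the crossover at which this quantity hits $1$ occurs at $d=9$ (mirroring the variance phase change). Once this numerical fact is settled, the rest of the argument is a structural transcription of Sections~\ref{secns>26} with no further subtleties.
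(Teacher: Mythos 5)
Your overall plan is exactly the route the paper intends: Section~\ref{sec_ext} explicitly skips the proofs because they follow the same pattern, so Theorem~\ref{thm13m} is proved by transcribing the argument for Theorem~\ref{thm13} (normalization via $\Cov(\mathcal{V}_n)^{-1/2}$, Theorem~4.1 of \cite{neininger04} with $s=3$, and the closed-subspace/product-measure argument of Lemma~\ref{fpm23} for normality and independence of the second marginal), with $(V_1,\dots,V_m)$ replaced by the quadrant volumes $\langle U\rangle_1,\dots,\langle U\rangle_{2^d}$. In that respect your Steps 1--3 are the paper's proof.

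However, the ``anticipated obstacle'' you single out is not an obstacle, and your account of where the hypothesis $1\le d\le 8$ enters is wrong. Each $\langle U\rangle_r$ is a product of $d$ \emph{independent} Uniform$[0,1]$ factors (each factor being $U_l$ or $1-U_l$), not a product of pairs of uniforms; hence $\E[\langle U\rangle_1^{3/2}]=(2/5)^d$ and $\sum_{1\le r\le 2^d}\E\|F_r^{\mathrm{quad},*}\|_{\mathrm{op}}^3=2^d(2/5)^d=(4/5)^d<1$ for \emph{every} $d\ge1$; this quantity is decreasing in $d$ and never ``hits $1$ at $d=9$'' (likewise $m\E[V_1^{3/2}]<1$ for all $m$ in the $m$-ary case, so the contraction condition is not the source of any phase change there either). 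The range $1\le d\le 8$ is needed precisely where your Step~1 already uses it: it is equivalent to $2\hat{\alpha}<1$, which gives $\V(L_n)\sim E_L n$ and $\Cov(\Xi_n,L_n)=o(n^{3/2})$, hence $\rho(\Xi_n,L_n)\to 0$ and $\Cov(\mathcal{V}_n)^{-1/2}$ asymptotically diagonal with the scalings $\sqrt{E_X}\,n$ and $\sqrt{E_L n}$; for $d\ge 9$ the leaf count has non-convergent periodic fluctuations of order $n^{\hat{\alpha}}$ and this normalization scheme (and the convergence in distribution itself) breaks down, regardless of the contraction inequality. A smaller slip in the same direction: $\Cov(\Xi_n,L_n)\sim E_R n$ holds only for $1\le d\le 5$; for $6\le d\le 8$ it is of order $n^{\hat{\alpha}+1}$, which is still $o(n^{3/2})$, so the conclusion of Step~1 (identity limit of the normalized covariance) survives, but the input you quote should be corrected accordingly.
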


The case when $d=1$ corresponds to binary search trees, or
equivalently, to Hoare's quicksort, and the above theorem can be
re-worded as follows. \emph{The number of comparisons and the number
of partitioning stages used by Hoare's quicksort are asymptotically
uncorrelated and independent.} Note that our results in the previous
section for random FBBSTs give indeed a stronger statement for the
asymptotic independence or asymptotical periodicity for quicksort
using median-of-($2t+1$).

\subsection{More general shape parameters}

Our study can be extended to other shape parameters. For random
$m$-ary search trees, the generality of Proposition~\ref{asymp-trans}
provides an effective means of widening our study to a broader class
of ``toll functions" in the definitions of $S_n$, $K_n$ and $N_n$.
For example, the following extensions are straightforward.
\begin{align}\label{Sn-extdd}
    S_n\stackrel{d}{=}S_{I_1}^{(1)}+\cdots+S_{I_m}^{(m)}
    +\begin{cases}c+o(n^{-\varepsilon}),
    &\text{if}\ 2\le m\le 13;\\
    o(n^{\alpha-1}),&\text{if}\ m\ge 14
    \end{cases}
\end{align}
for some constant $c$, and

\begin{description}

\item[--] $K_n\stackrel{d}{=}K_{I_1}^{(1)}+\cdots
+K_{I_m}^{(m)}+n+t_n$ with
\begin{align}\label{tn-iff}
    t_n=o(n) \quad \text{and}\quad
    \left\vert\sum_{n}t_n n^{-2}\right\vert<\infty,
\end{align}
and

\item[--] $N_n\stackrel{d}{=}N_{I_1}^{(1)}+\cdots+N_{I_m}^{(m)}
+S_{I_1}^{(1)}+\cdots+S_{I_m}^{(m)}+t_n$, where the $S_n$'s satisfy
\eqref{Sn-extdd} and $t_n$ satisfies \eqref{tn-iff}.

\end{description}

Because the same iff-condition \eqref{tn-iff} also appears in the
recurrence relations arising from the two other classes of random
trees (see \cite{chern07,chern01}), exactly the same conditions can
be used to extend the consideration for FBBSTs and quadtrees. Details
are omitted here.

\bibliographystyle{abbrv}
\bibliography{m-ary-search-tree}

% \appendix
% \addcontentsline{toc}{section}{Appendices}
% \section*{Appendices}

%\pagebreak
%\pagestyle{empty}

\end{document}